\documentclass[11pt,hyp,]{nyjm}
\usepackage{hyperref}
\hypersetup{nesting=true,debug=true,naturalnames=true}
\usepackage{graphicx,amssymb,upref}
\usepackage{amssymb}
\usepackage{color}
\usepackage{epsfig}
\usepackage{calrsfs}

\hyphenation{Lem-ma}
\hyphenation{para-metr-iza-tion}

\let\<\langle
\let\>\rangle

\let\uml\"

%\documentclass{amsart}

%\usepackage{amssymb}
%\usepackage{color}
%\usepackage{epsfig}
%\usepackage{calrsfs}
%\usepackage[hidelinks]{hyperref}

%\sloppy

%\swapnumbers

\numberwithin{equation}{section}

\newtheorem{Thm}{Theorem}[section]
\newtheorem{Prop}[Thm]{Proposition}
\newtheorem{Lem}[Thm]{Lemma}
\newtheorem{Cor}[Thm]{Corollary}
\theoremstyle{definition}
\newtheorem{Expl}[Thm]{Example}
\newtheorem{Expls}[Thm]{Examples}
\newtheorem{Rem}[Thm]{Remark}
\newtheorem{Def}[Thm]{Definition}

\DeclareMathAlphabet{\pazocal}{OMS}{zplm}{m}{n}

\newcommand{\diam}{\operatorname{diam}}
\newcommand{\spt}{\operatorname{spt}}
\newcommand{\F}{\operatorname{Fill}}
\newcommand{\Lev}{\operatorname{Lev}}
\newcommand{\bM}{\mathbf{M}}
\newcommand{\B}{\mathbf{B}}
\newcommand{\oB}{\mathbf{U}}
\newcommand{\Lip}{\operatorname{Lip}}

\newcommand{\N}{\mathbb{N}}
\newcommand{\R}{\mathbb{R}}
\newcommand{\Z}{\mathbb{Z}}
\newcommand{\cD}{\mathcal{D}}
\newcommand{\cF}{\mathcal{F}}
\newcommand{\cH}{\mathcal{H}}
\newcommand{\cL}{\mathcal{L}}

\newcommand{\cM}{\mathcal{M}}
\newcommand{\cP}{\mathcal{P}}
\newcommand{\cR}{\mathcal{R}}

\newcommand{\defl}{\mathrel{\mathop:}=}

\newcommand{\curr}[1]{[\![{#1}]\!]}
\newcommand{\op}[1]{{\rm{#1}}}

\def\Xint#1{\mathchoice
	{\XXint\displaystyle\textstyle{#1}}%
	{\XXint\textstyle\scriptstyle{#1}}%
	{\XXint\scriptstyle\scriptscriptstyle{#1}}%
	{\XXint\scriptscriptstyle\scriptscriptstyle{#1}}%
	\!\int}
\def\XXint#1#2#3{{\setbox0=\hbox{$#1{#2#3}{\int}$ }
		\vcenter{\hbox{$#2#3$ }}\kern-.6\wd0}}

\def\dashint{\Xint-}

\hyphenation{Lip-schitz}

%%%%%%%%%%%%%%%%%%%%%%%%%%%%%%%%%%%%%%%%%%%%%%%%%%%%%%%%%%%%%%%%%%%%%%%%%%%%%%%

\title[Matchings in metric spaces]{Matchings in metric spaces, the dual problem and calibrations modulo 2}

\author{Mircea Petrache}
\address{Max-Planck Institute for Mathematics, Vivatsgasse 7, 53111 Bonn, Germany}
\email{decostruttivismo@gmail.com}
\author{Roger Z\"{u}st}
\address{University of Bern, Mathematical Institute, Alpeneggstrasse 22, 3012 Bern, Switzerland}
\email{roger.zuest@math.unibe.ch}

\keywords{minimal matching, rectifiable chain, Kantorovich duality, calibration, tree}

\subjclass[2010]{49Q15,	49Q20, 49Q05, 28A75}

\begin{document}

\begin{abstract}
We show that for a metric space with an even number of points there is a $1$-Lipschitz map to a tree-like space with the same matching number. This result gives the first basic version of an unoriented Kantorovich duality. The study of the duality gives a version of global calibrations for $1$-chains with coefficients in $\mathbb Z_2$. Finally we extend the results to infinite metric spaces and present a notion of ``matching dimension'' which arises naturally.
\end{abstract}

\maketitle

%%%%%%%%%%%%%%%%%%%%%%%%%%%%%%%%%%%%%%%%%%%%%%%%%%%%%%%%%%%%%%%%%%%%%%%%%%%%%%%

\section{Introduction}
Let $n \in \N$ and $X=\{x_1,\ldots,x_{2n}\}$ a set with $2n$ points equipped with a pseudometric $d$. A \emph{matching} on $X$ is a partition $\pi$ of $X$ into $n$ pairs of points, $\pi=\{\{x_1,x'_1\},\ldots,\{x_n,x'_n\}\}$. The set of all matchings on $X$ is denoted by $\cM(X)$. The main object of study in this work is the \emph{minimum matching problem} (cfr. \cite{edmonds} for a combinatorial analogue) for $d$, which is the following minimization:
\begin{equation}\label{matchingofd}
m(X,d) \defl \min_{\pi\in \cM(X)}\sum_{\{x,x'\} \in \pi} d(x,x')\ .
\end{equation}
The topic of the present work is the description of the dual problem for \eqref{matchingofd}. The interesting phenomenon is that dual objects are characterized by a special tree structure. A pseudometric space $(X,d)$ is said to be \emph{tree-like} if for any choice of points $x_1,x_2,x_3,x_4 \in X$,
\begin{align}
\nonumber
d(x_1,x_3) & + d(x_2,x_4) \\
\label{fourpointcond}
 & \leq \max\{d(x_1,x_2) + d(x_3,x_4), d(x_1,x_4) + d(x_2,x_3)\}\ .
\end{align}
$(X,d)$ is tree-like if and only if it can be realized as a subset of a metric tree (in case of a genuine pseudometric, we identify those points in $X$ with vanishing distance), see \cite{Za}, \cite{Bu} for finite spaces and \cite{D} for the general case. \emph{Metric trees} can be characterized as those metric spaces $(\tilde X,d)$ such that between any two points $x,y\in \tilde X$ there exists (up to reparametrization) a unique shortest curve from $x$ to $y$ of length $d(x,y)$. Throughout these notes we will also assume that metric trees are complete.

Our main basic observation is that:
\begin{Thm}
\label{mainthm}
For any pseudometric $d$ on a set $X$ of even cardinality, there is a tree-like pseudometric $D$ on $X$ with $D \leq d$ and $m(X,D) = m(X,d)$.
\end{Thm}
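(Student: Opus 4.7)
The plan is to induct on $n$, where $|X|=2n$. The base case $n=1$ is immediate: the four-point condition~\eqref{fourpointcond} is vacuous on a two-point set, so $d$ itself is already tree-like and $D:=d$ suffices.

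For the inductive step, fix a minimum matching $\pi^* \in \cM(X)$ with total cost $m(X,d) = \sum_{\{x,x'\}\in\pi^*} d(x,x')$. The aim is to locate a pair $\{a,b\}\in\pi^*$ and a partition $X = A\sqcup B$ with $a\in A$, $b\in B$ (both of odd cardinality) such that (i) every other pair of $\pi^*$ lies entirely inside $A$ or entirely inside $B$, and (ii) $m(X,d) = d(a,b) + m(A\sm\{a\}, d|_A) + m(B\sm\{b\}, d|_B)$. The inductive hypothesis then supplies metric trees $T_A, T_B$ and $1$-Lipschitz maps $f_A: A\sm\{a\} \to T_A$, $f_B: B\sm\{b\} \to T_B$ whose induced tree-like pseudometrics $D_A \leq d|_{A\sm\{a\}}$ and $D_B \leq d|_{B\sm\{b\}}$ preserve the matching numbers. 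I assemble $T$ by joining $T_A$ and $T_B$ with an isometric copy of $[0,d(a,b)]$ attached at basepoints $p_A \in T_A$, $p_B \in T_B$, and set $f(a)=p_A$, $f(b)=p_B$. Pulling the tree metric back gives a tree-like pseudometric $D \leq d$ on $X$, and the explicit formula for the minimum matching of a finite subset of a metric tree --- each edge $e$ contributes $\ell(e)$ to the optimum exactly when its removal splits $X$ into two subsets of odd cardinality --- combined with (ii) yields $m(X,D) = d(a,b) + m(A\sm\{a\}, D_A) + m(B\sm\{b\}, D_B) = m(X,d)$.

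The main obstacle is the existence of a decomposition with properties (i) and (ii). I expect this to follow from LP duality for the matching problem: Edmonds' description of the perfect matching polytope via odd-cut inequalities admits a dual optimum whose tight constraints identify an odd cut witnessed by exactly one matched pair, producing the pair $\{a,b\}$, the partition $A \sqcup B$, and the additive cost in (ii); if needed, an exchange argument within the set of optimal matchings should relocate $\pi^*$ so that the cut is witnessed by a single pair of $\pi^*$. A secondary technical point is the choice of basepoints $p_A, p_B$ within the constructed trees so that $d_{T_A}(f_A(x),p_A) + d(a,b) + d_{T_B}(p_B, f_B(y)) \leq d(x,y)$ for all $x\in A\sm\{a\}$ and $y \in B\sm\{b\}$; I expect this compatibility to come from the triangle inequality and the tightness of the odd cut, although it will likely require strengthening the inductive hypothesis to record admissible attachment points inside each constructed tree.
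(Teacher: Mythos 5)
You take a genuinely different route from the paper, so let me compare briefly and then explain where it breaks. The paper's argument is not inductive: it considers the compact set $\cD$ of pseudometrics $d' \leq d$ with $m(X,d') = m(X,d)$, picks $D \in \cD$ minimizing $\sum_{i \neq j} D(i,j)$, shows via an analysis of tight triangle inequalities (Lemma~\ref{preplem}) that under this minimal $D$ \emph{every} pair of points belongs to some minimal matching, and finally derives the four-point condition \eqref{fourpointcond} by resplitting the even cycles of the multigraph of two optimal matchings into new matchings with a strictly smaller total. Your proposal instead inducts on $n$: split off a matched pair $\{a,b\}$ along an odd cut $A \sqcup B$, recursively build trees $T_A$, $T_B$ for $A\sm\{a\}$ and $B\sm\{b\}$, and glue by an isometric bridge of length $d(a,b)$.

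The bridge-gluing step is too rigid, and the failure is not a mere technicality about the choice of basepoints. Take $X = \{1,2,3,4\}$ with $d(i,j) = 2$ for all $i \neq j$. This $d$ already satisfies \eqref{fourpointcond}, so $D = d$ works (realize $X$ as the endpoints of a four-legged star with legs of length $1$), and $m(X,d) = 4$. But for \emph{any} choice of minimum matching $\pi^*$, matched pair $\{a,b\}$, odd cut $A \sqcup B$, recursive trees and basepoints $p_A \in T_A$, $p_B \in T_B$, your gluing forces for all $x \in A$ and $y \in B$,
\[
D(x,y) \;=\; d_{T_A}(f_A(x), p_A) + 2 + d_{T_B}(p_B, f_B(y)) \;\geq\; 2 \;=\; d(x,y)\ ,
\]
with equality only when $f_A(x) = p_A$ and $f_B(y) = p_B$. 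Imposing $D \leq d$ therefore collapses $T_A$ and $T_B$ each to a single point, which gives $m(X,D) = 2 < 4$. No choice of cut, pair, attachment points, or strengthening of the inductive hypothesis escapes this: the obstruction is topological. Any tree realizing this particular $X$ with the correct matching number must have a single interior vertex of degree $4$, whereas your recursion only produces trees in which some full-length bridge of length $d(a,b)$ separates the images of $A$ from those of $B$. Separately, the appeal to Edmonds' odd-cut LP does not rescue the step: for this metric the dual optimum is $y_v = 1$ for all $v$ with $z_S = 0$ for every odd $|S| \geq 3$, so the dual singles out no cut at all; and your property (ii) in fact follows from (i) and the optimality of $\pi^*$ by an elementary exchange, so the LP machinery is doing no work here. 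The essential missing ingredient is a gluing scheme that allows branching rather than a bridge, which is what the paper's non-inductive minimization-plus-cycle argument achieves.
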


The metric $D$ that we construct has some additional properties. For example, there holds $\cH^1(T) = m(X,d)$ for the metric tree $T$ that is spanned by some minimal metric $D$ as in the theorem above, see Proposition~\ref{mainthmbis}. We develop three concepts as applications of Theorem~\ref{mainthm}, where the dual objects presented here give respectively a notion of unoriented Kantorovich duality, a notion of global calibrations modulo $2$ and a notion of matching dimension.

\subsection{Unoriented Kantorovich duality}
There is a very direct link between our duality result and a basic version of the so-called Kantorovich duality. More precisely we have in mind the following, by now classical, result (see \cite{kantorovich} for the originating idea, and see e.g.\ \cite[Lemma~2.2]{sandier} for a proof of this precise statement):

\begin{Thm}[Kantorovich duality]\label{kantorovich}
	Let $(X,d)$ be a metric space of cardinality $2n$. Let $\Pi=\{\{x_1^+,\ldots,x_n^+\},\{x_1^-,\ldots,x_n^-\}\}$ be a partition of $X$ into two $n$-ples of points. Then the following holds,
	\begin{equation}\label{calibration}
	\min_{\sigma\in S_n}\sum_{i=1}^nd\bigl(x_i^+,x_{\sigma(i)}^-\bigr) = \max\Biggl\{\sum_{i=1}^nf(x_i^+)-f(x_i^-) \ \Bigg|\begin{array}{l}f: X\to \R \\ \text{is }1\text{-Lipschitz} \end{array} \Biggr\}\ .
	\end{equation}
\end{Thm}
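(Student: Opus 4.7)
The plan is to prove the two inequalities of \eqref{calibration} separately, using linear programming duality as the main engine.

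The ``$\leq$'' direction is weak duality and is immediate: for any $1$-Lipschitz $f: X \to \R$ and any $\sigma \in S_n$, reindexing the sum over the negatives and applying the Lipschitz bound term by term give
\[
\sum_{i=1}^n \bigl(f(x_i^+) - f(x_i^-)\bigr) = \sum_{i=1}^n \bigl(f(x_i^+) - f(x_{\sigma(i)}^-)\bigr) \leq \sum_{i=1}^n d(x_i^+, x_{\sigma(i)}^-),
\]
and taking the maximum on the left and the minimum on the right yields the desired bound.

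For ``$\geq$'', I would encode the left-hand side as a linear program on the complete bipartite graph $K_{n,n}$: minimize $\sum_{i,j} c_{ij}\xi_{ij}$ subject to $\sum_j \xi_{ij} = 1$ for each $i$, $\sum_i \xi_{ij} = 1$ for each $j$, and $\xi_{ij} \geq 0$, with costs $c_{ij} \defl d(x_i^+, x_j^-)$. By Birkhoff's theorem the extreme points of the feasible polytope are permutation matrices, so the LP value equals $\min_\sigma \sum_i c_{i,\sigma(i)}$. The LP dual is: maximize $\sum_i u_i + \sum_j w_j$ subject to $u_i + w_j \leq c_{ij}$, and strong LP duality identifies the two values.

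It remains to assemble an optimal dual pair $(u^*, w^*)$ into a \emph{globally} $1$-Lipschitz function $f: X \to \R$ realizing the right-hand side of \eqref{calibration}. The natural candidate is the $c$-transform formula $f(x) \defl \min_j \bigl( d(x, x_j^-) - w_j^* \bigr)$, which is automatically $1$-Lipschitz as a pointwise minimum of $1$-Lipschitz functions. At $x = x_i^+$ one reads off $f(x_i^+) = \min_j (c_{ij} - w_j^*) = u_i^*$, the last equality being the dual optimality $u_i^* = \min_j (c_{ij} - w_j^*)$. The main obstacle lies in evaluating $f$ at the negative points: the needed identity $f(x_j^-) = -w_j^*$ is equivalent to $w^*$ being $1$-Lipschitz on $\{x_j^-\}$, which is not forced by the dual constraints alone. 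I would derive it from complementary slackness applied to an optimal matching $\sigma^*$: the tight identity $u_{i'}^* + w_{\sigma^*(i')}^* = c_{i', \sigma^*(i')}$ combined with the feasibility inequality $u_{i'}^* + w_{\sigma^*(i)}^* \leq c_{i', \sigma^*(i)}$ and the triangle inequality produces $w_{\sigma^*(i)}^* - w_{\sigma^*(i')}^* \leq d(x_{\sigma^*(i)}^-, x_{\sigma^*(i')}^-)$, and since $\sigma^*$ is a bijection this bound covers every pair of negative points. Once this is established, $\sum_i (f(x_i^+) - f(x_i^-)) = \sum_i u_i^* + \sum_j w_j^*$ equals the LP value, closing the loop.
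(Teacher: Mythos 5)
The paper does not actually supply a proof of Theorem~\ref{kantorovich}; it records the statement as a classical fact and refers the reader to \cite{kantorovich} and to \cite[Lemma~2.2]{sandier}. So there is no in-paper argument to compare against. That said, your proof is correct and is one of the standard routes.

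The weak direction is fine. For the strong direction, a few points are worth being explicit about because they are precisely where such arguments tend to be handwaved. Strong LP duality applies here because the primal (Birkhoff polytope) is a nonempty bounded polytope, so both values are attained. The identity $u_i^* = \min_j\bigl(c_{ij} - w_j^*\bigr)$ does hold for \emph{every} optimal dual pair, not just some: if it failed strictly for some $i$, replacing $u_i^*$ by $\min_j\bigl(c_{ij}-w_j^*\bigr)$ preserves feasibility and strictly increases the objective, contradicting optimality. And your complementary-slackness step
\[
w_{\sigma^*(i)}^* - w_{\sigma^*(i')}^* \;\leq\; c_{i',\sigma^*(i)} - c_{i',\sigma^*(i')} \;\leq\; d\bigl(x_{\sigma^*(i)}^-,x_{\sigma^*(i')}^-\bigr)
\]
is exactly the triangle-inequality bootstrap needed to make the $c$-transform $f(x)=\min_j\bigl(d(x,x_j^-)-w_j^*\bigr)$ take the value $-w_j^*$ at $x_j^-$; since $\sigma^*$ is a bijection this indeed covers every ordered pair of negative points, and the case $j=k$ is trivial. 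With that, $\sum_i\bigl(f(x_i^+)-f(x_i^-)\bigr)=\sum_i u_i^*+\sum_j w_j^*$ equals the primal optimum and the proof closes.

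So: correct proof, standard Birkhoff/LP-duality plus $c$-transform approach. An alternative classical route avoids LP duality by invoking cyclic monotonicity of an optimal matching and defining the potential directly as a supremum of telescoping sums along chains; that route generalizes more readily to continuous marginals, whereas yours is maximally transparent in the finite bipartite setting the theorem is stated for.
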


A matching $\{\{x_1^+, x_{\sigma(1)}^-\},\dots,\{x_n^+,x_{\sigma(n)}^-\}\}$ achieving the above minimum is sometimes called a \emph{minimal connection} corresponding to the partition $\Pi$ and in general a matching respecting this partition like in Theorem~\ref{kantorovich} is called an \emph{admissible connection} for $\Pi$. Let $M(\Pi, d)$ denote the length of the minimal connection. In another setting we may imagine that $X\subset\tilde X$ is a finite set in another metric space and we have two probability measures $\mu^+, \mu^-$ defined as
\begin{equation}\label{measuresatomic}
\mu^\pm \defl \frac{1}{n}\sum_{i=1}^n\delta_{x_i^\pm}\ .
\end{equation}
In this case we have
\[
M(\Pi,d)=W_1(\mu^+,\mu^-)\ ,
\]
where $W_1$ is the $1$-Wasserstein distance defined on probability measures (cfr.\ \cite{vil}, \cite{ags} and the references therein). By density considerations, if $\tilde X$ is Polish, then giving $W_1$ on measures of the type \eqref{measuresatomic} is the same as giving it on the whole set of probability measures on $\tilde X$.\\

Note that for a $1$-Lipschitz function $f:X\to\mathbb R$ satisfying \eqref{calibration} there holds
\begin{equation}\label{pushforwardminconn}
\sum_{i=1}^nf(x_i^+)-f(x_i^-) = \min_{\sigma\in S_n}\sum_{i=1}^n d_{\mathbb R}\Bigl(f\bigl(x_i^+\bigr),f\bigl(x_{\sigma(i)}^-\bigr)\Bigr) = M(\Pi, f^*d_{\mathbb R})\ .
\end{equation}
In view of \eqref{pushforwardminconn}, closely related to Theorem~\ref{kantorovich} is the following:

\begin{Thm}[Kantorovich duality, pullback formulation]\label{kantorovich2}
Let $(X,d)$ be a metric space of cardinality $2n$ and $\Pi=\{\{x_1^+,\ldots,x_n^+\},\{x_1^-,\ldots,x_n^-\}\}$ be a partition of $X$ into two $n$-ples of points. Then the following holds,
\begin{equation}
\label{calibration2}
M(\Pi,d) = \max\left\{M(\Pi,f^*d_{\mathbb R})\,|\, f: X\to\mathbb R \text{ is }1\text{-Lipschitz}\right\}\ .
\end{equation}
\end{Thm}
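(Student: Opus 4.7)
The plan is to prove both inequalities in \eqref{calibration2} separately. Both will follow almost immediately from what has already been set up: the classical Kantorovich duality of Theorem~\ref{kantorovich}, and the identity \eqref{pushforwardminconn}.

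For the inequality $M(\Pi,d) \geq M(\Pi, f^*d_{\mathbb R})$ valid for every $1$-Lipschitz $f$, the key observation is monotonicity of the minimum connection in the metric: since $f$ is $1$-Lipschitz, $f^*d_{\mathbb R}(x,y) = |f(x)-f(y)| \leq d(x,y)$ for all $x,y \in X$, hence for every permutation $\sigma \in S_n$,
\begin{equation*}
\sum_{i=1}^n f^*d_{\mathbb R}\bigl(x_i^+, x_{\sigma(i)}^-\bigr) \leq \sum_{i=1}^n d\bigl(x_i^+, x_{\sigma(i)}^-\bigr).
\end{equation*}
Minimizing over $\sigma$ on both sides yields $M(\Pi, f^*d_{\mathbb R}) \leq M(\Pi, d)$, so the supremum on the right-hand side of \eqref{calibration2} is bounded above by $M(\Pi,d)$.

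For the reverse inequality, the idea is to exhibit a $1$-Lipschitz function $f_0$ that attains the value $M(\Pi,d)$. Apply Theorem~\ref{kantorovich} to obtain a $1$-Lipschitz $f_0 : X \to \mathbb R$ realizing the maximum in \eqref{calibration}, i.e.\ $\sum_{i=1}^n f_0(x_i^+) - f_0(x_i^-) = M(\Pi,d)$. Then use \eqref{pushforwardminconn} applied to $f_0$: the sum $\sum_{i=1}^n f_0(x_i^+) - f_0(x_i^-)$ equals $M(\Pi, f_0^*d_{\mathbb R})$, so $M(\Pi, f_0^*d_{\mathbb R}) = M(\Pi,d)$ and the supremum is attained.

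There is essentially no obstacle here; the only subtlety to check is the identity \eqref{pushforwardminconn}, but this has already been recorded before the statement. Its content is that since $\sum_i f_0(x_i^+)-f_0(x_{\sigma(i)}^-)$ is independent of $\sigma$ and is trivially dominated by $\sum_i |f_0(x_i^+)-f_0(x_{\sigma(i)}^-)|$, one gets $\sum_i f_0(x_i^+)-f_0(x_i^-) \leq M(\Pi, f_0^*d_{\mathbb R})$, while the reverse follows by applying Theorem~\ref{kantorovich} with the pullback metric $f_0^*d_{\mathbb R}$ (for which $f_0$ is tautologically $1$-Lipschitz). Combining the two directions completes the proof.
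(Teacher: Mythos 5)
Your proof is correct and follows exactly the route the paper intends: the paper does not write out a formal proof of Theorem~\ref{kantorovich2} but presents it as a direct consequence of Theorem~\ref{kantorovich} together with the identity \eqref{pushforwardminconn}, and your two inequalities (monotonicity of $M(\Pi,\cdot)$ under $f^*d_{\mathbb R} \leq d$ for the upper bound, and the maximizer $f_0$ from classical Kantorovich duality plus \eqref{pushforwardminconn} for the lower bound) are precisely the two ingredients the authors invoke.
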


A slight reformulation of Theorem~\ref{mainthm} makes the analogy with the Kantorovich duality clear.

\begin{Thm}[unoriented Kantorovich duality]\label{mainthmbis2}
Let $(X,d)$ be a pseudometric space of cardinality $2n$. Then
\begin{equation}\label{minmaxmatch2}
m(X,d) = \max\biggl\{m(X,f^*d_T) \ \bigg|\begin{array}{l}f: X\to (T, d_T) \text{ is }1\text{-Lipschitz }\\\text{and }(T, d_T)\text{ is a metric tree}\end{array}\biggr\}\ .
\end{equation}
\end{Thm}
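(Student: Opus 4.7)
My plan is to derive Theorem~\ref{mainthmbis2} as an almost immediate consequence of Theorem~\ref{mainthm}, together with the characterization recalled above that tree-like pseudometric spaces embed into metric trees (identifying points at distance zero). The proof naturally splits into an easy upper bound on the right-hand side and a matching lower bound that uses Theorem~\ref{mainthm} to actually produce a maximizer.

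For the inequality ``$\max \le m(X,d)$'', fix any $1$-Lipschitz map $f:X\to (T,d_T)$ with $T$ a metric tree. Then $f^*d_T(x,y) = d_T(f(x),f(y)) \le d(x,y)$ for all $x,y\in X$, so for any matching $\pi\in\cM(X)$,
\[
\sum_{\{x,x'\}\in\pi} f^*d_T(x,x') \;\le\; \sum_{\{x,x'\}\in\pi} d(x,x').
\]
Taking the minimum over $\pi$ on both sides yields $m(X,f^*d_T)\le m(X,d)$, and then taking the supremum over admissible $(f,T)$ yields the bound.

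For the reverse inequality, apply Theorem~\ref{mainthm} to obtain a tree-like pseudometric $D$ on $X$ with $D\le d$ and $m(X,D)=m(X,d)$. By the characterization recalled just before Theorem~\ref{mainthm} (see \cite{Za},\cite{Bu},\cite{D}), there exists a metric tree $(T,d_T)$ and a map $f\colon X\to T$ such that $d_T(f(x),f(y)) = D(x,y)$ for all $x,y\in X$ (points with $D$-distance zero are mapped to the same point of $T$). Since $D\le d$, the map $f$ is $1$-Lipschitz from $(X,d)$ to $(T,d_T)$, and by construction $f^*d_T = D$. Consequently
\[
m(X,f^*d_T) \;=\; m(X,D) \;=\; m(X,d),
\]
so this particular $f$ attains the value $m(X,d)$ in the supremum on the right-hand side of \eqref{minmaxmatch2}. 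Together with the first inequality this both establishes equality and shows that the supremum is in fact a maximum.

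There is essentially no obstacle beyond invoking Theorem~\ref{mainthm}: the entire content of the ``unoriented Kantorovich duality'' in this reformulation is the fact that the optimal dual object can be realized as a $1$-Lipschitz map into a metric tree, and this is exactly what Theorem~\ref{mainthm} provides once one translates tree-like pseudometrics on $X$ into $1$-Lipschitz maps $X\to T$ via the embedding theorem for finite tree-like spaces.
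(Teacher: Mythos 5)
Your proof is correct and matches the paper's intended argument: the paper presents Theorem~\ref{mainthmbis2} as a direct reformulation of Theorem~\ref{mainthm}, and the explicit realization appears in Proposition~\ref{mainthmbis}, where the tree-like pseudometric $D$ is embedded isometrically into a complete metric tree and the resulting map is observed to be $1$-Lipschitz from $(X,d)$. Your two-step structure (trivial upper bound from $f^*d_T\le d$, lower bound achieved by the embedding of $D$) is exactly what the paper does.
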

The important difference between this theorem and Theorem~\ref{kantorovich2} is that here the minimization is done amongst a wider class of competitors. The set $X$ has $\frac{(2n)!}{2^n n!}$ matchings and once we fix a partition $\Pi$ only $n!$ of them are admissible connections for it. Therefore there holds
\begin{equation*}%\label{gapmatchconn}
m(X,d) \le M(\Pi,d)\ ,
\end{equation*}
with a strict inequality in general. It might then look slightly surprising that, while on the one hand the minimum on the left side of \eqref{minmaxmatch2} is smaller than the corresponding minimum in the orientable problem, on the other hand in order to achieve the same number by the maximum on the right side we have to enlarge the space of $1$-Lipschitz maps competing for the dual problem, passing form $\mathbb R$ to general metric trees.\\

For the sake of concreteness we also formulate more explicitly a corollary of Theorem~\ref{mainthmbis2} in a special situation: 
\begin{Cor}\label{euclidean}
 Let $X\subset\mathbb R^n$ be a subset of even cardinality. Then there holds
 \[
   \min_{\pi\in \cM(X)}\sum_{\{x,x'\} \in \pi}|x - x'| = \max_{f, T}\min_{\pi\in \cM(X)}\sum_{\{x,x'\} \in \pi}d_T(f(x),f(x'))\ ,
 \]
 where the maximum is taken over all metric trees $(T,d_T)$ and all $1$-Lipschitz functions $f:\mathbb R^n\to T$.
\end{Cor}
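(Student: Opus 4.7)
The plan is to derive the corollary almost verbatim from the unoriented Kantorovich duality (Theorem~\ref{mainthmbis2}), the only extra ingredient being a $1$-Lipschitz extension of a map from $X$ to all of $\R^n$. The inequality $\ge$ is immediate: for any metric tree $(T,d_T)$ and any $1$-Lipschitz $f:\R^n\to T$, the restriction $f|_X$ is $1$-Lipschitz, so $d_T(f(x),f(x'))\le|x-x'|$ on $X$; consequently every matching has $f^*d_T$-cost bounded by its Euclidean cost, and minimizing over $\pi\in\cM(X)$ and then taking the supremum over $(f,T)$ yields the inequality.

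For the reverse direction, I would apply Theorem~\ref{mainthmbis2} to the pseudometric space $(X,|\cdot|)$ (with $|\cdot|$ the restricted Euclidean distance). This produces a metric tree $(T,d_T)$ and a $1$-Lipschitz map $f_0:X\to T$ with $m(X,f_0^*d_T)=m(X,|\cdot|)$. The essential obstacle is that the corollary demands a map defined on the entire ambient $\R^n$, whereas Theorem~\ref{mainthmbis2} only provides one on $X$. I would remove this obstacle by invoking the classical fact that complete metric trees are hyperconvex---equivalently, injective metric spaces, or absolute $1$-Lipschitz retracts (Aronszajn--Panitchpakdi). This guarantees a $1$-Lipschitz extension $\tilde f:\R^n\to T$ of $f_0$. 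Since $\tilde f|_X=f_0$, the pulled back pseudometric on $X$ is unchanged, so $m(X,\tilde f^*d_T)=m(X,|\cdot|)$, and the pair $(T,\tilde f)$ achieves the maximum on the right-hand side.

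If one wished to avoid invoking a general extension theorem, an alternative would be to inspect the construction behind Theorem~\ref{mainthm}: for $X\subset\R^n$ one can realize the tree $T$ concretely as a finite metric graph (for instance a Steiner-type tree built from $X$) and use a nearest-point retraction inside a suitable injective ambient space to obtain $\tilde f$ directly. Either way, the entire content of the corollary beyond Theorem~\ref{mainthmbis2} is this one extension step.
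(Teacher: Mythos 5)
Your proof is correct and follows essentially the same route as the paper: the authors likewise obtain the dual pair from Theorem~\ref{mainthmbis2} and then invoke injectivity of complete metric trees (cited as \cite[Lemma~2.1]{L}, equivalent to the Aronszajn--Panitchpakdi hyperconvexity characterization you use) to extend the $1$-Lipschitz map from $X$ to all of $\R^n$.
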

For more properties of the maximizing couples $(f,T)$ see Proposition~\ref{mainthmbis}.
\subsection{Global calibrations modulo $2$}
In Section \ref{calibrations} we connect our result to the theory of calibrations, and give a natural answer in the first truly nontrivial case to the question of extending the notion of a calibration to the setting of the Plateau problem for chains with coefficients in a group. Let $(T,d_T)$ be a metric tree. A $1$-Lipschitz function $\rho : T \to \R$ is an \emph{orientation modulo 2 for $A \subset T$} if for any arc $[a,b] \subset T$ we have that the norm of the approximate differential (see \cite{F}) of $\rho|_{[a,b]}$ equals one, i.e.\ $J(\rho|_{[a,b]})(t) = 1$, for $\cH^1$-a.e.\ $t \in [a,b] \cap A$. Such orientations for $T$ are given for example by the distance functions $t \mapsto d_T(p,t)$ for any $p \in T$. 

Let $(\tilde X,d)$ be a metric space. As defined in \cite{depauwhardt}, the set $\cR_1(\tilde X,\Z_2)$ of rectifiable $1$-chains modulo $2$ is composed of chains $\curr \Gamma$, where $\Gamma$ is some $\cH^1$-rectifiable set $\Gamma \subset \tilde X$ (there holds $\curr \Gamma = \curr {\Gamma'}$ if and only if $\cH^1(\Gamma \Delta \Gamma') = 0$). The mass of $\curr \Gamma$ is given by $\bM(\curr \Gamma) = \cH^1(\Gamma)$ and assumed to be finite. We then associate to $\curr{\Gamma}$ a $\cH^1$-integrable coefficient map $g_{\curr{\Gamma}} : \tilde X \to \Z_2$ with the property that $g_{\curr{\Gamma}}(x) = 1$ if $x \in \Gamma$ and $g_{\curr{\Gamma}}(x) = 0$ otherwise. Applying the general framework \cite{depauwhardt} of flat chains with coefficients in a normed Abelian group $G$, this allows to define a group structure on $\cR_1(\tilde X, \Z_2)$. In the present case, $G=\Z_2$ and the sum $\curr{\Gamma_1} + \curr{\Gamma_2}$ is identified with $ \curr{\Gamma_1 \Delta \Gamma_2}$, since the coefficient map identity $g_{\curr{\Gamma_1} + \curr{\Gamma_2}} = g_{\curr{\Gamma_1}} + g_{\curr{\Gamma_2}}$ is interpreted for functions with values in $\Z_2$. If $f : \tilde X \to \R$ is Lipschitz we can define its action on $\curr \Gamma$ as follows. Fix some countable parametrization $\gamma_i : K_i \subset \R \to \gamma_i(K_i) \subset \Gamma$, i.e.\ $K_i$ is compact, the images $\gamma_i(K_i)$ are pairwise disjoint, all $\gamma_i$ are bi-Lipschitz and $\cH^1(\Gamma \setminus \cup_i \gamma_i(K_i)) = 0$. Then we define
\[
\curr \Gamma(df) \defl \sum_i \int_{K_i} |(f \circ \gamma_i)'(t)| \, d\cH^1(t) \ .
\]
It is not hard to check that this definition does not depend on the parametrization and on the choice of the set $\Gamma$ representing $\curr \Gamma$ as above. Further, $\curr \Gamma(df) \leq \Lip(f) \bM(\curr \Gamma)$ and if $f \in C^1(\R^n)$ and $\gamma : [0,1] \to \R^n$ is Lipschitz and injective, then $\gamma_\#\curr{0,1}(df) = \int_0^1 |df(\gamma'(t))|\, dt$, justifying the use of $df$ in the definition of this action. In contrast to chains with coefficients in $\Z$, this action is not linear. For Lipschitz functions $f,g$ and $C,C' \in \cR_1(\tilde X,\Z_2)$ there holds, $C(d(f + g)) \leq C(df) + C(dg)$ and $(C + C')(df) \leq C(df) + C'(df)$, with strict inequalities in general.

A Lipschitz chain $C \in \cL_1(\tilde X,\Z_2)$ is given by a finite sum $\sum_{i=1}^n \gamma_{i\#} \curr{0,1}$ for Lipschitz curves $\gamma_i : [0,1] \to \tilde X$. The boundary of $C$ is defined to be 
\[
\partial C \defl \sum_{i=1}^n \curr{\gamma_i(1)} + \curr{\gamma_i(0)} \ ,
\]
see \cite[Theorem~4.2.1]{depauwhardt}. This shows that such boundaries are composed of an even number of points. From \cite[Theorem~4.3.4]{depauwhardt} it follows that the same is true for any $C \in \cR_1(\tilde X,\Z_2)$ with finite boundary mass. On the other side, if $\tilde X$ is Lipschitz path connected, any collection of an even number of points in $\tilde X$ is the boundary of some Lipschitz chain.

\begin{Prop}
	\label{calibintro}
Let $(\tilde X,d)$ be a geodesic metric space and let $\curr X$ be a $0$-boundary modulo $2$ in $\tilde X$ (i.e.\ $X$ is a subset of even cardinality). Let $f : \tilde X \to T$ be a $1$-Lipschitz map into a metric tree $(T,d_T)$ with $m(X,d) = m(X, f^* d_T)$ and $\rho$ an orientation modulo $2$ for $T$. Then for any $C \in \cR_1(\tilde X,\Z_2)$ with $\partial C = \curr X$ there holds
\[
m(X,d) \leq C(d(\rho \circ f)) \leq \bM(C) \ ,
\]
with equalities if and only if $C=\sum_{i=1}^n\curr{x_i, y_i}$ where $[x_i,y_i]$ are geodesic segments and $\{\{x_i,y_i\},1\le i\le n\}$ is a minimal matching for $(X,d)$.
\end{Prop}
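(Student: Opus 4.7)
The strategy is to prove the two inequalities separately and then extract the equality characterization from the tightness of each intermediate step. The upper inequality $C(d(\rho\circ f))\le\bM(C)$ is immediate: $f:\tilde X\to T$ and $\rho:T\to\R$ are $1$-Lipschitz, so their composition is $1$-Lipschitz, and then the general estimate $C(dg)\le\Lip(g)\bM(C)$ recalled before the statement applies.

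For the lower inequality $m(X,d)\le C(d(\rho\circ f))$, the plan is to factor through the pushforward chain $f_\#C\in\cR_1(T,\Z_2)$, which has boundary $f_\#\curr X$, by proving
\[
C(d(\rho\circ f))\ge (f_\#C)(d\rho)=\bM(f_\#C)\ge m(X,d).
\]
For the first inequality, fix a countable bi-Lipschitz parametrization $\gamma_i:K_i\to\tilde X$ of a set representative of $C$ so that $f_\#C=\sum_i(f\circ\gamma_i)_\#\curr{K_i}$; sub-additivity of the action under sums of chains, combined with an area-formula estimate $((f\circ\gamma_i)_\#\curr{K_i})(d\rho)\le\int_{K_i}|(\rho\circ f\circ\gamma_i)'|\,d\cH^1$ that accounts for the mod $2$ cancellations when $f\circ\gamma_i$ is not injective, gives the bound. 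The middle equality is the calibration property of $\rho$: a bi-Lipschitz parametrization $\beta:L\to T$ of the support of $f_\#C$ satisfies $|(\rho\circ\beta)'|=|\beta'|$ a.e.\ because $\rho$ is an orientation modulo $2$ for $T$, so $(f_\#C)(d\rho)$ equals the $\cH^1$-measure of the support, i.e.\ $\bM(f_\#C)$. The last inequality uses the tree structure: a slicing argument with a root function $\rho_o=d_T(o,\cdot)$ shows that in a metric tree the minimum mass of a $1$-chain modulo $2$ with prescribed boundary equals the minimum matching of the boundary points (an edge of the spanning tree with an odd-cardinality side must lie in the support of any admissible chain), and this minimum equals $m(X,f^*d_T)=m(X,d)$ by hypothesis.

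For the equality case, assume $m(X,d)=C(d(\rho\circ f))=\bM(C)$. Tightness in the upper bound gives $|(\rho\circ f\circ\gamma_i)'(t)|=|\gamma_i'(t)|$ for a.e.\ $t$, so $\rho\circ f$ preserves arc length along $C$. Tightness of the lower chain forces $f_\#C$ to be the sum of geodesic arcs of a minimum matching in $T$ and $\bM(f_\#C)=\bM(C)$, so $f$ must be isometric on each component of (a representative of) $C$. Pulling back, each component of $C$ is a geodesic segment $[x_i,y_i]\subset\tilde X$ with $d(x_i,y_i)=d_T(f(x_i),f(y_i))$ and the collection $\{\{x_i,y_i\}\}$ realizes $m(X,d)$; conversely, any $C$ of this form achieves both equalities by the above computations. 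The main technical obstacle lies in the first step of the lower bound, which requires careful area-formula bookkeeping of mod $2$ cancellations, together with the tree lemma used in the last step identifying minimum-mass $\Z_2$-chains with minimum matchings.
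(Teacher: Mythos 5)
Your proposal is correct in substance and follows the same skeleton as the paper's proof of Theorem~\ref{mainthmcal} (implication $(1)\Rightarrow(2)$), which is where Proposition~\ref{calibintro} is established: push forward along $f$ via \eqref{pushforwardestimate}, use the orientation property of $\rho$ to turn the action into $\cH^1$-mass, and close with a tree-side parity/slicing estimate and the identification of $\F_{\Z_2}$-minimizers with minimal matchings recalled in \eqref{massminimizer}. The one place you diverge is the lower bound: you argue $C(d(\rho\circ f))\ge (f_\#C)(d\rho)=\bM(f_\#C)\ge \F_{\Z_2}(f_\#\curr X)\ge m(X,d)$, invoking $\rho$ being an orientation on \emph{all} of $T$ and a general lemma ``minimum $\Z_2$-mass in a tree equals the minimum matching of the boundary.'' The paper instead fixes a minimal matching $\pi$ and uses Proposition~\ref{mainthmbis}(4) to show $A_\pi\subset\spt(f_\#C)$, then concludes directly from $\cH^1(A_\pi)=m(X,d)$ and the orientation of $\rho$ restricted to $A_X=A_\pi$; this is what lets the theorem get by with $\rho$ being an orientation only on $A_X$, not all of $T$, and avoids the small gap your route leaves unaddressed: when $f$ is not injective on $X$, the boundary $f_\#\curr X$ is supported on a reduced (mod 2) set $Y\subsetneq f(X)$, and one has to check $m(Y,d_T)=m(X,f^*d_T)$ before the hypothesis $m(X,f^*d_T)=m(X,d)$ can be plugged in. This is true (any optimal matching of $X$ under $f^*d_T$ can be rearranged to pair coincident images at zero cost), but it is worth stating. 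Finally, your equality analysis is more laborious than necessary: once both inequalities are tight you immediately get $\bM(C)=m(X,d)=\F_{\Z_2}(\curr X)$, and the recalled characterization of $\Z_2$-mass minimizers in geodesic spaces (beginning of Subsection~\ref{platmodp}) already says $C=\sum_i\curr{x_i,y_i}$ over a minimal matching, without needing to re-derive isometry of $f$ along $\spt C$.
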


We then may define:
\begin{Def}[global calibrations modulo $2$]
 Let $(\tilde X,d)$ be a geodesic metric space and let $\curr X$ be a $0$-boundary modulo $2$ in $\tilde X$. The differential $d(\rho \circ f)$ for $f,\rho$ like in Proposition~\ref{calibintro} is called a global calibration modulo $2$ for $\curr{X}$.
\end{Def}
For the proof of Proposition~\ref{calibintro} and more properties of global calibrations modulo $2$ see Theorem~\ref{mainthmcal}. As a link to classical results, we include Proposition~\ref{usualcal} which is the analogue of Proposition~\ref{defcal2justif} valid for usual calibrations. See Subsection~\ref{gencalibrations} for references to the existing literature. Three directions for generalizations are briefly discussed in the remarks at the end of Section~\ref{calibrations}.

\subsection{Matching dimension}
As a concrete application of our new global duality result for matchings, we prove an incompressibility property for minimum matchings. If we have $k$ points constrained in a $n$-dimensional cube of side $1$, then we show that the maximal total length of the minimum matching segments behaves like $k^{\frac{n-1}{n}}$. This result uses the properties of the tree we construct in connection with the matching number and the coarea-formula. See Proposition~\ref{maxmatch} for this result. An analogy with this Euclidean case justifies in particular to define the \emph{matching dimension} of a metric space. \\

\textbf{Acknowledgements. }Corollary \ref{euclidean} answers a question posed by Tristan Rivi\`ere, to whom go our thanks. The first author was supported by the Fondation des Sciences Math\'ematiques de Paris and the second author was supported by the Swiss National Science Foundation.

\section{Calibrations modulo 2}\label{calibrations}
\subsection{Calibrations for integral chains}\label{integralchains}
We recall here the setting of the theory of calibrations (see \cite{harveylawson}, \cite{federcal}). The following is a simple proof that the shortest oriented curve connecting two points $a,b\in\mathbb R^n$ is the oriented segment $[a,b]$. Let $\alpha = df$ be the constant differential form obtained as the exterior derivative of the linear function $f(x) = \langle x, \tau \rangle$, where $\tau = \frac{a-b}{|a-b|}$.
%\begin{equation}\label{calibrationdef}
% \langle \alpha, \tau\rangle =\max_{\tau'\in\mathbb S^{n-1}} \langle \alpha, \tau'\rangle = 1\ ,
%\end{equation}
%dual to the unit vector $\tau$ orienting $[a,b]$. 
Then for any other Lipschitz curve $\gamma$ from $a$ to $b$ we have
\begin{equation}\label{calibreineq}
 \op{lenght}([a,b]) = \int_{[a,b]}\alpha = \int_{\gamma}\alpha \le \op{lenght}(\gamma)\ ,
\end{equation}
where we used the fact that $\alpha$ is $d$-closed, i.e.\ $d\alpha=0$, for the middle equality and for the remaining equality and inequality we used the fact that $\alpha$ has comass $1$, i.e.\ $\|\alpha\| = \max_{v \in S^{n-1}} \alpha(v) \leq 1$, with equality $\alpha(\tau) = 1$.

More in general, we may apply the same method for minimizers of the following problem. Let $\{\{x_1^+,\dots,x_n^+\},\{x_1^-,\dots,x_n^-\}\} = \{X^+,X^-\}$ be a partition of a set of $2n$ points in a connected Riemannian manifold $\tilde X$ and set $\curr{X^{\pm}} \defl \sum_{i=1}^n(\curr{x_i^+} - \curr{x_i^-}) \in \cR_0(\tilde X, \Z)$. Consider then
\begin{equation}\label{minmod0}
 \F_{\mathbb Z}(\curr{X^{\pm}}) \defl \inf\bigl\{\bM(C)\ |\ C \in \cR_1(\tilde X, \Z) \text{ and }\partial C=\curr{X^{\pm}} \bigr\}\ .
\end{equation}
Here $\cR_m(\tilde X,\Z)$ is the space of rectifiable chains of dimension $m$ with coefficients in $\Z$ as defined in \cite{depauwhardt}. $\cR_m(\tilde X,\Z)$ agrees with the space of $m$-dimensional integer rectifiable currents as defined in \cite{F}. In particular, $C \in \cR_m(\tilde X,\Z)$ acts on differential $m$-forms and its boundary has the defining property $\partial C(\omega) = C(d\omega)$. Using this duality with smooth $m$-forms \eqref{calibreineq} can be generalized to prove the minimality of certain chains in $\cR_m(\tilde X,\Z)$ as well. A \emph{calibration} of dimension $m$ is a comass $1$ closed $m$-form. This is one of the most robust tools for testing the minimality of oriented submanifolds. For more precise definitions and extensions see \cite{harveylawson}.

\subsection{Plateau problem for chains modulo \textit{p}}\label{platmodp}
Here and in the rest of this section we consider a set $\{x_1,\ldots,x_{2n}\}=X\subset\tilde X$ of cardinality $2n$ where $\tilde X$ is a geodesic metric space and $X$ has the induced metric $d$. The condition $\# X=2n$ implies that $\curr{X} \defl \sum_{i=1}^{2n}\curr{x_i}$ is the boundary of some $1$-chain with coefficients in $\mathbb Z_2$. In our setting we recall that $k$-dimensional chains with coefficients in a normed Abelian group $G$ are the completion for the so-called \emph{flat distance} of the set of finite sums of Lipschitz singular $k$-simplices with multiplicities in $G$. See \cite{depauwhardt} for more details. We consider the $1$-dimensional unoriented Plateau problem analogous to the one of the previous section:
\begin{equation}\label{minmod2}
\F_{\mathbb Z_2}(\curr X) \defl \inf\bigl\{\bM(C)\ |\ C \in \cR_1(\tilde X, \Z_2)\text{ and }\partial C=\curr{X}\bigr\}\ ,
\end{equation}
We encourage the interested reader to consult \cite{F} and \cite{ambrosiowenger}, \cite{white} for results on the solution of the Plateau problem and for the case of $k$-chains with coefficients in a normed Abelian group like $\mathbb Z_p$. We just mention here that in our case $p=2$ the minimum in \eqref{minmod2} is realized and equal to $m(X,d)$. Moreover, minimizers $C$ are precisely chains of the form
\[
 C = \sum_{\{x,y\}\in\pi} \curr{x,y}\ ,\quad \text{where }\left\{\begin{array}{l}\pi\text{ is a minimizer of \eqref{matchingofd} and } \curr{x,y}\\ \text{is the }1\text{-chain corresponding to}\\\text{some geodesic segment }[x,y]\subset\tilde X.\end{array}\right.
\]

Contrary to the case of integral chains, there is no linear duality with $1$-forms for $1$-chains with coefficients in $\mathbb Z_p$. Therefore if we want to find a replacement for calibrations allowing to test minimality like in \eqref{calibreineq} a different object must be found.\\

Some partial extension of the duality method was already considered in \cite{morgan} for chains with coefficients in $\mathbb Z_p$ in Euclidean spaces $\mathbb R^n$. The observation there is that imposing extra local conditions on the calibration forms and some multiplicity bounds on projections for the minimizing objects has the effect of reducing the study of the minimization to a situation similar to the integer coefficient case. For some related negative results see also \cite{young}.\\

As explained in \cite{morgan} and in Examples~\ref{closuremod2}, in general having only a local condition on calibrations will not insure \emph{global minimality} of calibrated chains with coefficients in $\Z_2$. Our result gives a natural and optimal notion of calibrations for $1$-chains with coefficients in $\mathbb Z_2$ by capturing the nonlocal phenomena. We will see below (in Remark \ref{calmod3}) that different ideas are needed for a similar natural notion in the case of other groups, e.g.\ $\mathbb Z_p$, $p>2$.

\subsection{Global calibrations modulo 2}
We now describe an extension of Theorem~\ref{mainthmbis2} which allows to build a solid analogy with the result of Subsection~\ref{integralchains}.\\

Given a closed set $A\subset\tilde X$ and a set $X\subset\tilde X$ of even cardinality, we say that $A$ is a \emph{$\mathbb Z_2$-cut} of $X$ if at least one of the connected components of $\tilde X\setminus A$ contains an odd number of points in $X$. Then denote
\begin{equation}\label{defcut2}
\op{Cut}_{\mathbb Z_2}(A,X) \defl \#\biggl\{\begin{array}{c}\text{connected components }A' \text{ of } A\\\text{that are }\mathbb Z_2\text{-cuts} \end{array}\biggr\}\ .
\end{equation}
For a Lipschitz function $\varphi:\tilde X\to\mathbb R$ we define
\begin{equation}\label{deflev2}
 \op{lev}_{\mathbb Z_2}(\varphi,X) \defl \int_{\R}\op{Cut}_{\mathbb Z_2}(\varphi = t,X) \, dt\ .
\end{equation}
We then consider the following real number:
\begin{equation}\label{calimod2}
 \Lev_{\mathbb Z_2}(X) \defl \sup\Bigl\{\op{lev}_{\mathbb Z_2}(\varphi)\, \Big| \, \varphi:\tilde X\to \mathbb R \text{ is }1\text{-Lipschitz}\Bigr\}\ .
\end{equation}
For a map $f : X \to T$ defined on an even cardinality metric space $X$ into a tree, define 
\begin{equation}\label{defamxd}
 A_{X} \defl \bigcup\left\{[f(x),f(y)] \left| \begin{array}{l}\{x,y\}\text{ appears in some}\\ \text{ minimal matching of } (X,d) \end{array}\right.\right\}\ .
\end{equation}
See Proposition~\ref{mainthmbis} for some properties of this set. We then have the following result.

\begin{Thm}\label{mainthmcal}
Let $(\tilde X,d)$ be a geodesic metric space, $X=\{x_1,\ldots,x_{2n}\} \subset \tilde X$ be some even cardinality subset and $\varphi : \tilde X \to \R$ be a $1$-Lipschitz function. Consider the following statements:
\begin{enumerate}
	\item $\varphi = \rho \circ f$ for $1$-Lipschitz maps $f : \tilde X \to T$, $\rho : T \to \R$ where $(T,d_T)$ is a metric tree, $\rho$ is an orientation modulo $2$ for $A_X$ and $m(X,d) = m(X,f^*d_T)$.
	\item For any $1$-chain $C \in \cR_1(\tilde X,\Z_2)$ with $\partial C=\curr{X}$ there holds $C(d\varphi) \ge \F_{\Z_2}(\curr X)$.
	\item $\op{lev}_{\mathbb Z_2}(\varphi,X) = \op{Lev}_{\mathbb Z_2}(X)$.
\end{enumerate}
The following implications hold: $(1) \Rightarrow (2)$. If $\pi_1^{\Lip}(\tilde X) = 0$ then $(2) \Rightarrow (1)$. If $H_1(\tilde X) = 0$ or $H_1^{\Lip}(\tilde X) = 0$ then $(1) \Leftrightarrow (3)$. In particular if $\pi_1^{\Lip}(\tilde X) = 0$ then all three statements are equivalent.

Moreover, If $H_1(\tilde X) = 0$ or $H_1^{\Lip}(\tilde X) = 0$, then
\[
m(X,d) = \F_{\Z_2}(\curr X) = \Lev_{\mathbb Z_2}(X) \ .
\]
\end{Thm}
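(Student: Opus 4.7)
The plan is to handle the four assertions in sequence, leveraging Theorem~\ref{mainthm} to supply the required tree structures and a coarea-type level-set inequality to bridge the differential action $C(d\varphi)$ with the combinatorial quantity $\op{lev}_{\Z_2}(\varphi,X)$.

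The implication $(1) \Rightarrow (2)$ is essentially Proposition~\ref{calibintro}: if $\varphi = \rho \circ f$ factors through a tree $T$ with $\rho$ an orientation modulo $2$ of $A_X$, then the pushforward $f_\# C \in \cR_1(T,\Z_2)$ has boundary $f_\# \curr X$ and mass at least $m(X, f^* d_T) = m(X,d)$ since $T$ is a tree, while the orientation property of $\rho$ on $A_X$ promotes this mass bound to $C(d\varphi) \geq m(X,d)$. Combined with $\F_{\Z_2}(\curr X) = m(X,d)$ from the end of Subsection~\ref{platmodp}, this yields statement $(2)$.

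For $(2) \Rightarrow (1)$ under $\pi_1^{\Lip}(\tilde X) = 0$, I would build the tree directly from $\varphi$ as a Reeb-type quotient $T := \tilde X/\!\sim$ collapsing each path-connected component of every level set of $\varphi$ to a point, equipped with the quotient pseudometric induced by $d$. Lipschitz simple connectedness forces the four-point tree condition \eqref{fourpointcond} on $T$, so that $T$ is a metric tree and $\varphi$ factors as $\rho \circ f$ with both $f : \tilde X \to T$ and $\rho : T \to \R$ being $1$-Lipschitz. Testing (2) against a minimal matching chain $C^* = \sum_i \curr{x_i^*, y_i^*}$ in $\tilde X$ forces $C^*(d\varphi) = \bM(C^*) = m(X,d)$, so $\varphi$ is isometric along each minimal matching segment; combining $d_T(f(x_i^*), f(y_i^*)) \geq |\rho(f(x_i^*)) - \rho(f(y_i^*))| = d(x_i^*, y_i^*)$ with the reverse $1$-Lipschitz bound gives $f^* d_T(x_i^*, y_i^*) = d(x_i^*, y_i^*)$ along a minimal matching. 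Upgrading this to the full identity $m(X, f^* d_T) = m(X,d)$ and verifying the orientation property of $\rho$ on $A_X$ are accomplished by a contradiction argument: if either failed, one could produce, by lifting tree-chains back to $\tilde X$ using Lipschitz simple connectedness, a chain $C'$ with $\partial C' = \curr X$ and $C'(d\varphi) < m(X,d)$, contradicting $(2)$.

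For $(1) \Leftrightarrow (3)$ under the $H_1$ vanishing hypothesis and for the final equalities, the bridge is the coarea-style inequality
\[
C(d\varphi) \geq \int_{\R} \op{Cut}_{\Z_2}(\varphi = t, X)\, dt = \op{lev}_{\Z_2}(\varphi, X)\ ,
\]
valid for any $C$ with $\partial C = \curr X$ because each connected component of $\varphi^{-1}(t)$ which is a $\Z_2$-cut must be crossed by $C$ an odd, hence positive, number of times, and the $H_1$ hypothesis forbids a homological shortcut bypassing the crossing count. Under (1), regular level sets of $\varphi$ correspond to finite subsets of $T$, and the orientation of $\rho$ together with matching preservation upgrades this into an equality equal to $m(X,d)$, giving (3); conversely, applying the Reeb-type quotient to a maximizer of $\op{lev}_{\Z_2}(\cdot, X)$ yields the factorization in (1). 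Chaining $\bM(C) \geq C(d\varphi) \geq \op{lev}_{\Z_2}(\varphi, X)$ with realizers from Theorem~\ref{mainthmbis2} produces $m(X,d) = \F_{\Z_2}(\curr X) = \Lev_{\Z_2}(X)$. The main obstacle I expect is the Reeb-type construction in $(2) \Rightarrow (1)$: rigorously establishing the tree condition and the global matching-preservation identity under Lipschitz simple connectedness will require careful handling of the quotient geometry and of the pseudometric identifications involved.
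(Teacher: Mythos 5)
Your outline captures the high-level strategy but has several genuine gaps, the most serious of which is in the step $(2) \Rightarrow (1)$.

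First, in $(2) \Rightarrow (1)$ you claim that testing $(2)$ against a minimal matching chain $C^*$ forces $\varphi$ to be isometric along each matching segment $[x_i^*,y_i^*]$, and that this yields $|\rho(f(x_i^*)) - \rho(f(y_i^*))| = d(x_i^*,y_i^*)$, from which $d_T(f(x_i^*),f(y_i^*)) = d(x_i^*,y_i^*)$ follows. This inference is false: $C^*(d\varphi) = \bM(C^*)$ only forces $J(\varphi|_{[x_i^*,y_i^*]}) = 1$ almost everywhere, which allows $\varphi$ to oscillate with unit speed along the segment so that $|\varphi(x_i^*) - \varphi(y_i^*)|$ is strictly less than $d(x_i^*,y_i^*)$. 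The paper's own example with $\varphi(p)=|p|$ illustrates exactly this phenomenon. The correct route is substantially more delicate: one must first establish that $f$ is injective on each $[x_i^*,y_i^*]$, and only then combine the chain rule $J(\varphi|_{[x,y]}) = J(\rho|_{f([x,y])}) J(f|_{[x,y]})$ with the $1$-Lipschitz bounds to conclude that $f|_{[x,y]}$ is an isometry. Moreover the tree you build should not be the naive Reeb quotient with the $d$-induced quotient pseudometric; the paper uses the \emph{length} pseudometric $d_\varphi(x,y) = \inf\{\op{length}(\varphi\circ\gamma)\}$ following Wenger--Young, and it is precisely \cite[Theorem~5]{WY} (requiring $\pi_1^{\Lip}(\tilde X)=0$) which guarantees that the resulting quotient is a metric tree. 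Injectivity of $f$ on matching segments is then proved by contradiction: if $f(v)=f(w)$ for $v<w$ in $[x,y]$, then $d_\varphi(v,w)=0$ gives curves $\gamma_n$ from $v$ to $w$ with $\varphi$-length tending to zero, and splicing these into $C^*$ produces competitors violating $(2)$. Your ``lifting tree-chains by Lipschitz simple connectedness'' is not a substitute for this argument.

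Second, citing Proposition~\ref{calibintro} to establish $(1)\Rightarrow(2)$ is circular, since that proposition is stated as a consequence of the present theorem. The actual argument must go through Proposition~\ref{mainthmbis}(4): every point of $A_\pi \setminus V_f(X)$ separates $X$ into two pieces of odd cardinality, which forces $A_\pi \subset \spt(f_\# C)$; then the orientation property of $\rho$ together with \eqref{pushforwardestimate} gives $C(d(\rho\circ f)) \geq (f_\# C)(d\rho) \geq \cH^1(A_\pi) = m(X,d)$. Finally, your treatment of $(1)\Leftrightarrow(3)$ is too sketchy: the pointwise bound $C(d\varphi) \geq \op{lev}_{\Z_2}(\varphi,X)$ you invoke is not established in the paper and is not needed; instead, the upper bound $\op{lev}_{\Z_2}(\varphi,X)\le m(X,d)$ is obtained by applying the area formula only along matched geodesic segments, and the disconnection lemma (Lemma~\ref{disconnectlem}, which is exactly where the $H_1$ hypothesis enters) is the tool that converts level sets into single $\Z_2$-cut components. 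The converse $(3)\Rightarrow(1)$ then uses a different tree construction, $D(x,x')=\inf\{\diam\varphi(C)\}$ from \cite{Z}, not the Reeb quotient you describe for the $(2)\Rightarrow(1)$ step.
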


This theorem implies Proposition~\ref{calibintro} in the introduction and will be proved in Subsection~\ref{proofofmainthmcal}. The following examples show that the implications $(2) \Rightarrow (1)$ and $(3) \Rightarrow (1)$ do not hold for $\tilde X = S^1$.

\begin{Expls}
$(2) \nRightarrow (1)$: Consider $S^1 \subset \mathbb C$ with the intrinsic metric $d$ and let $X \defl \{i,-i\}$. Define $\varphi : S^1 \to \R$ by $\varphi(p) \defl d(1,p)$. Then $\varphi(1) \neq \varphi(-1)$. Clearly, $m(X,d) = C(d\varphi) = \pi$ for any of the two chains $C \in \cR_1(S^1,\Z_2)$ with $\partial C = \curr X$. But if $\rho \circ f = \varphi$ with $d_T(f(i),f(-i)) = \pi$ as in (1), $f$ would map both arcs in $S^1$ connecting $i$ with $-i$ isometrically to some geodesic segment of length $\pi$. But then $f(1)$ and $f(-1)$ are both equal to the midpoint of this segment contradicting $\varphi(1) \neq \varphi(-1)$.

$(3) \nRightarrow (1)$: For any $X \subset S^1$ consisting of two different points and any Lipschitz function $\varphi : S^1 \to \R$, there always holds $\op{lev}_{\Z_2}(\varphi,X) = 0$ since there is no connected set in $S^1$ that disconnects $X$. Hence, $\op{Lev}_{\Z_2}(X) = 0$ and any $1$-Lipschitz function achieves this maximum. Therefore $(3) \Rightarrow (1)$ doesn't hold.
\end{Expls}

The analogue of $\op{Cut}_{\mathbb Z_2}(A,X)$ and $\op{lev}_{\mathbb Z_2}(\varphi,X)$ for the minimization on integral chains like in Section~\ref{integralchains} is as follows. For a closed set $A \subset \tilde X$ and for a partition $\Pi=\{X^+,X^-\}$ of $X$ into two parts of equal cardinality, define the quantity
\[
 \op{Cut}_{\mathbb Z}(A,\Pi) \defl \left|\#(A\cap X^+) - \#(A\cap X^-)\right|\ .
\]
Then for a $1$-Lipschitz function $f:\tilde X\to \mathbb R$ define 
\begin{equation}\label{levz}
 \op{lev}_{\mathbb Z}(f,\Pi) \defl \int_{\R}\op{Cut}_{\mathbb Z}(\{f\le t\},\Pi) \, dt\le \F_{\mathbb Z}(\curr {X^\pm}) \ .
\end{equation}

\begin{proof}[Proof of \eqref{levz}]
Indeed, let $C$ be an integer rectifiable $1$-chain in $\tilde X$ with $\partial C = \curr{X^+} - \curr{X^-}$. Parametrize $C$ via the triple $[ \Gamma, \theta, \tau ]$ where $\Gamma$ is a $\cH^1$-rectifiable set, $\theta\in L^1(\Gamma,\cH^1)$ and has values a.e.\ in $\mathbb Z \setminus \{0\}$, and $\tau$ is a $\mathcal H^1$-measurable orienting vector field for $\Gamma$. Then via the area formula and using the fact that $f$ is $1$-Lipschitz there holds
\begin{align*}
 \int_{\mathbb R} \sum_{p\in f^{-1}(t)\cap \Gamma}|\theta(p)|\,dt & = \int_\Gamma J(f|_{\Gamma})(p)|\theta(p)|\,d\cH^1(p) \\
 & \leq \int_\Gamma|\theta(p)|\,d\cH^1(p) = \bM(C)\ .
\end{align*}
Since $\op{Cut}_{\Z}(\{f\le t\},\Pi)$ is bounded from above by $\sum_{p\in f^{-1}(t)\cap\Gamma}|\theta(p)|$ for a.e.\ $t\in\mathbb R$ it follows that $\op{lev}_{\mathbb Z}(f,\Pi) \leq \bM(C)$. Taking now the infimum over all such $C$ like in \eqref{minmod0} we conclude.
\end{proof}

If $\Lev_{\mathbb Z}(\Pi)$ is defined to be the supremum of $\op{lev}_{\Z}(f,\Pi)$ among all $f$ as above, then we see immediately that Theorem~\ref{kantorovich} states exactly that $\F_{\Z}(\curr {X^\pm}) = \Lev_{\mathbb Z}(\Pi)$. Calibrations like in Subsection~\ref{integralchains} appear via the following well-known fact, of which we provide a sketch of proof for the convenience of the reader.

\begin{Prop}\label{usualcal}
Let $\tilde X$ be a connected Riemannian manifold with $0 = H^1(\tilde X,\R)$ and $\Pi$ be some partition $\{\{x_1^+,\dots,x_n^+\},\{x_1^-,\dots,x_n^-\}\}$ of a finite subset $X$ of $\tilde X$. Let $C \in \cR_1(\tilde X, \Z)$ be an integer rectifiable $1$-chain with $\partial C = \curr{X^{\pm}}$ and $\bM(C) = \F_{\Z}(\curr{X^{\pm}})$. For a closed flat $1$-form $\alpha$ on $\tilde X$ the following are equivalent:
 \begin{enumerate}
  \item $\alpha$ is a calibration for $C$, i.e.\ $\alpha$ has comass $1$ and satisfies $C(\alpha)=\bM(C)$.
  \item $\alpha$ is a calibration for any minimizer $C$ as above.
  \item $\alpha=d f$ for some $1$-Lipschitz function $f : \tilde X \to \R$ for which as in \eqref{calibration}
  \[
  \min_{\sigma\in S_n}\sum_{i=1}^nd\bigl(x_i^+,x_{\sigma(i)}^-\bigr) = \sum_{i=1}^nf(x_i^+)-f(x_i^-) \ .
  \] 
  \item $\alpha=d f$ for some $1$-Lipschitz function $f : \tilde X \to \R$ realizing the equality $\op{lev}_{\mathbb Z}(f,\Pi) = \op{Lev}_{\mathbb Z}(\Pi)$.
 \end{enumerate}
\end{Prop}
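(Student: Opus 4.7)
The plan is to first exploit the hypothesis $H^1(\tilde X,\R)=0$ to translate the conditions on the closed flat $1$-form $\alpha$ into conditions on a $1$-Lipschitz potential. Since $\tilde X$ is a connected Riemannian manifold with vanishing first cohomology, every closed flat $1$-form is exact, so I can write $\alpha=df$ for some flat function $f:\tilde X\to\R$, unique up to an additive constant. The comass-$1$ condition $\|\alpha\|=\esssup|df|\le 1$ is then equivalent to $f$ being $1$-Lipschitz. With this dictionary in place, all four statements become statements about a $1$-Lipschitz function $f$.

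I would then establish the cycle $(1)\Rightarrow(3)\Rightarrow(2)\Rightarrow(1)$. For $(1)\Rightarrow(3)$ the calibration identity together with Stokes' theorem gives
\[
\bM(C)\;=\;C(\alpha)\;=\;C(df)\;=\;\partial C(f)\;=\;\sum_{i=1}^n\bigl(f(x_i^+)-f(x_i^-)\bigr),
\]
and since $\bM(C)=\F_{\Z}(\curr{X^\pm})=M(\Pi,d)$, the Kantorovich duality (Theorem~\ref{kantorovich}) is saturated, which is exactly (3). The implication $(3)\Rightarrow(2)$ is the same computation applied to an arbitrary minimizer $C'$: the chain of equalities $C'(\alpha)=\partial C'(f)=\sum(f(x_i^+)-f(x_i^-))=M(\Pi,d)=\bM(C')$ combined with $\|\alpha\|=1$ exhibits $\alpha$ as a calibration for $C'$. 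Finally $(2)\Rightarrow(1)$ is immediate since $C$ is a minimizer.

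For $(3)\Leftrightarrow(4)$ the key tool is the layer-cake formula applied to the atomic measures $\sum_i\delta_{x_i^\pm}$, giving
\[
\sum_{i=1}^n\bigl(f(x_i^+)-f(x_i^-)\bigr)\;=\;\int_\R g(t)\,dt,\qquad g(t):=\#\bigl(X^+\cap\{f>t\}\bigr)-\#\bigl(X^-\cap\{f>t\}\bigr).
\]
Since $\op{Cut}_{\Z}(\{f\le t\},\Pi)=|g(t)|$, we have $\op{lev}_{\Z}(f,\Pi)=\int|g|$. Combining Kantorovich duality, $|\int g|\le\int|g|$, and the inequality \eqref{levz} yields the chain
\[
\Bigl|\sum_{i=1}^n(f(x_i^+)-f(x_i^-))\Bigr|\;=\;\Bigl|\int g\Bigr|\;\le\;\int|g|\;=\;\op{lev}_{\Z}(f,\Pi)\;\le\;\F_{\Z}(\curr{X^\pm})\;=\;M(\Pi,d).
\]
Statement (3) is equality of the first and last quantities and (4) is equality of the last two (together with $\op{Lev}_{\Z}(\Pi)=M(\Pi,d)$, which falls out of Theorem~\ref{kantorovich} together with \eqref{levz}). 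Equality propagation along this chain, using that $\op{lev}_{\Z}$ is invariant under $f\mapsto -f$ while the Kantorovich sum changes sign, gives the equivalence.

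The main obstacle I anticipate is the direction $(4)\Rightarrow(3)$: the equality $\int|g|=M$ does not by itself force $g$ to have constant sign, and without that one cannot conclude $|\int g|=M$. Handling this cleanly is the subtle step and requires using that along a minimizer $C$ the multiplicity function of $C$ already has constant sign on each level set, so that saturation of $\op{lev}_{\Z}$ propagates to saturation of the (signed) Kantorovich functional after, if necessary, replacing $f$ by $-f$. The remaining equality $m(X,d)=\F_{\Z}(\curr{X^\pm})=\Lev_{\Z}(\Pi)$ — not literally asserted here but serving as the bridge — is the auxiliary bookkeeping computation behind the proposition.
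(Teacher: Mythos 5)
Your cycle $(1)\Rightarrow(3)\Rightarrow(2)\Rightarrow(1)$ follows the paper's own argument: de Rham plus the Lipschitz representation of flat $0$-forms turns $\alpha$ into a $1$-Lipschitz potential $f$, and $C(df)=\partial C(f)$ then forces the Kantorovich saturation; that part is correct. Structurally you depart from the paper only at the fourth vertex, where you argue $(3)\Leftrightarrow(4)$ via the layer-cake identity $\op{lev}_\Z(f,\Pi)=\int|g|$, while the paper sketches $(1)\Rightarrow(4)\Rightarrow(2)$. Either way, the gap you flag is the crux: $\int|g|=M$ does not force $\bigl|\int g\bigr|=M$.

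Unfortunately the patch you propose (the multiplicity of the minimizer has constant sign on each level set, then flip $f\mapsto -f$ if needed) does not close the gap, because the sign can flip between \emph{different} level values; the paper's one-sentence heuristic about ``$\nabla f$ being the orienting tangent field'' runs into exactly the same problem. A concrete counterexample: $\tilde X=\R$, $X^+=\{0,3\}$, $X^-=\{1,2\}$, $f(x)=x$, $\alpha=dx$. The minimal connection pairs $0\leftrightarrow1$ and $2\leftrightarrow3$, so $M(\Pi,d)=\F_\Z(\curr{X^\pm})=2$, and the unique minimizer is $C=\curr{1,0}+\curr{2,3}$. One checks $\op{Cut}_\Z(\{f\le t\},\Pi)=1$ for $t\in[0,1)\cup[2,3)$ and $0$ otherwise, hence $\op{lev}_\Z(f,\Pi)=2=\Lev_\Z(\Pi)$: statement (4) holds. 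Yet $\sum_i f(x_i^+)-f(x_i^-)=(0+3)-(1+2)=0\neq 2$, and $C(\alpha)=\partial C(f)=0\neq 2=\bM(C)$, so (1), (2), (3) all fail, and replacing $f$ by $-f$ changes nothing. In your notation $g$ equals $-1$ on $(0,1)$ and $+1$ on $(2,3)$, precisely the sign change you anticipated, and the density of the minimizer $C$ has the same sign pattern along the $f$-axis. So the implication $(4)\Rightarrow(3)$, and with it the four-way equivalence as stated, is false: $\op{lev}_\Z$ is built from $\op{Cut}_\Z=|\cdot|$ and is blind to these cancellations. A correct version of (4) would have to replace $\op{Cut}_\Z$ by the signed quantity $\#(A\cap X^+)-\#(A\cap X^-)$, or otherwise impose that $g$ does not change sign.
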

\begin{proof}[Sketch of proof:]
We prove $(1)\Rightarrow(3)\Rightarrow(2)\Rightarrow(1)$. Note first that $(2)\Rightarrow(1)$ is trivial. To prove $(1)\Rightarrow (3)$ note
first that since $H^1(\tilde X,\R)=0$ a variant of de Rham's theorem implies that $\alpha = d\beta$ for some locally flat $0$-form $\beta$ \cite[Theorem~4.4]{GH}. Since any flat $0$-form in $\R^n$ can be represented by a Lipschitz function, see e.g.\ Theorem~5.5 and Theorem~5.12 of \cite{H}, we can write $\beta = f$ for some locally Lipschitz function $f: \tilde X \to\mathbb R$. The comass $1$ condition on $\alpha$ then reads
\[
1 \geq \op{esssup}_{x \in \tilde X} \|\alpha\| = \op{esssup}_{x \in \tilde X} \max\{|df_x(v)|\ |\ v \in T_x\tilde X, |v| = 1 \}
\]
and since $\tilde X$ is geodesic this translates to $f$ being $1$-Lipschitz. Recall that a chain $C$ with $\bM(C) = \F_{\Z}(\curr{X^{\pm}})$ is of the form $C = \sum_{i=1}^n \curr{x_{\sigma(i)}^-, x_i^+}$ for some $\sigma \in S_n$, where $[x_{\sigma(i)}^-, x_i^+]$ is a geodesic from $x_{\sigma(i)}^-$ to $x_i^+$. Then since $f$ is $1$-Lipschitz and $C(df) = \bM(C) = \sum_{i=1}^n d\bigl(x_i^+, x_{\sigma(i)}^-\bigr)$, using the fact that $\partial C(f)=C(df)$ by the definition of boundary for currents \cite{F}, there holds
 \begin{align*}
  \sum_{i=1}^n f(x_i^+) - f(x_i^-) & \leq \sum_{i=1}^n d\bigl(x_i^+, x_{\sigma(i)}^-\bigr) = C(df) =\partial C(f) \\
	 & = \sum_{i=1}^n f(x_i^+) - f(x_i^-) \ .
 \end{align*}
Thus the inequality above is an equality and (3) holds. An analogous reasoning gives $(3)\Rightarrow (2)$. We then similarly prove $(1)\Rightarrow(4)\Rightarrow(2)$ by noting that counting the number of level sets of a $1$-Lipschitz function $f$ crossed by a curve $C$ which is part of a minimal connection gives the highest value when $\nabla f$ is the orienting unit tangent vector field of $C$.
\end{proof}

Note the following analogue of the above proposition.

\begin{Prop}\label{defcal2justif}
 Let $\tilde X$ be a connected Riemannian manifold with $0=H_1(\tilde X)$ and let $X\subset\tilde X$ be an even cardinality set. 
 Let $C \in \cR_1(\tilde X,\Z_2)$ with $\partial C = \curr X$ and $\bM(C) = \op{Fill}_{\mathbb Z_2}(\curr{X})$. For a closed flat $1$-form $\alpha$ on $\tilde X$ consider the following statements:
 \begin{enumerate}
  \item $\alpha$ has comass $1$ and for a fixed $C$ as above, $C(\alpha) = \bM(C)$.
  \item $\alpha$ has comass $1$ and for any minimizer $C$ as above, $C(\alpha) = \bM(C)$.
  \item $\alpha = d(\rho \circ f)$, where $f : \tilde X \to T$ is a $1$-Lipschitz map into a finite tree $(T,d_T)$ such that $m(X,d) = m(X, f^* d_T)$ and $\rho$ is an orientation for $A_{X}$ defined in \eqref{defamxd}.
  \item $\alpha = d \varphi$ for some $1$-Lipschitz function $\varphi : \tilde X \to \R$ realizing the equality $\op{lev}_{\mathbb Z_2}(\varphi,X)=\op{Lev}_{\mathbb Z_2}(X)$. 
 \end{enumerate}
 Then $(4) \Leftrightarrow (3)\Rightarrow (2) \Rightarrow (1)$.
\end{Prop}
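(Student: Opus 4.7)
My plan is to reduce every assertion of the proposition to a statement about a $1$-Lipschitz potential $\varphi : \tilde X \to \R$ with $\alpha = d\varphi$, and then invoke the equivalences already proved in Theorem~\ref{mainthmcal}. The bridge between the closed flat form $\alpha$ and such a potential is the de Rham step that opens the proof of Proposition~\ref{usualcal}: the assumption $H_1(\tilde X) = 0$ gives $H^1(\tilde X,\R) = 0$ by universal coefficients, hence any closed flat $1$-form is exact with a locally flat $0$-form as primitive, which in turn is represented by a locally Lipschitz function via \cite[Theorem~4.4]{GH} together with Theorems~5.5 and 5.12 of \cite{H}; the geodesic hypothesis on $\tilde X$ then upgrades the comass-$\le 1$ condition on $\alpha$ to the global $1$-Lipschitz property of $\varphi$.

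The implication $(2) \Rightarrow (1)$ is trivial, since any minimizer $C$ witnesses $(1)$ once $(2)$ is known. For $(3) \Rightarrow (2)$, Proposition~\ref{calibintro} (a special case of Theorem~\ref{mainthmcal}) gives $m(X,d) \leq C(d(\rho \circ f))$ for every $C$ with $\partial C = \curr X$, while the general bound $C(d(\rho \circ f)) \leq \Lip(\rho \circ f)\, \bM(C) \leq \bM(C)$ always holds. If $C$ is a minimizer so that $\bM(C) = \F_{\Z_2}(\curr X) = m(X,d)$, both inequalities collapse to equalities, giving $C(\alpha) = \bM(C)$. The comass of $\alpha$ is bounded above by $1$ since $\rho \circ f$ is $1$-Lipschitz, and in the nontrivial case $m(X,d) > 0$ it attains the value $1$ along geodesic pieces of a minimal matching (using the orientation-modulo-$2$ property of $\rho$ and the fact that $f$ maps these geodesics isometrically into $A_X$), so the comass equals $1$ exactly.

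For the equivalence $(4) \Leftrightarrow (3)$, I write $\alpha = d\varphi$ using the de Rham step above. Then statement $(3)$ is precisely statement $(1)$ of Theorem~\ref{mainthmcal} applied to this $\varphi$, and statement $(4)$ is precisely statement $(3)$ of that theorem. Under $H_1(\tilde X) = 0$ the theorem asserts $(1) \Leftrightarrow (3)$, giving the desired $(3) \Leftrightarrow (4)$ here.

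The main obstacle is precisely this de Rham step: producing a globally $1$-Lipschitz potential $\varphi$ with the correct Lipschitz constant from a closed flat $1$-form $\alpha$ of comass $1$. This is exactly the technical point worked out in the proof of Proposition~\ref{usualcal}, and I would reuse that argument verbatim. Once the potential is in hand, the remainder is a direct translation through Theorem~\ref{mainthmcal}, plus the comass-attainment observation noted above to rule out the degenerate case.
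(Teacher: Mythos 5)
Your overall strategy matches the paper's one-line proof: cite the trivial $(2)\Rightarrow(1)$, derive $(3)\Rightarrow(2)$ from the calibration inequality established for tree-valued maps, and pass $(3)\Leftrightarrow(4)$ through the equivalence $(1)\Leftrightarrow(3)$ of Theorem~\ref{mainthmcal}. Those three steps are sound; in particular, routing $(3)\Rightarrow(2)$ through Proposition~\ref{calibintro} is a perfectly good alternative to the paper's citation of Proposition~\ref{mainthmbis}(1) (the two ingredients are essentially the same — the latter is the isometry-on-geodesic-segments fact that underlies the former).

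The one thing to flag is that you identify the de Rham step (producing a $1$-Lipschitz potential $\varphi$ from a closed flat comass-$1$ form $\alpha$) as ``the main obstacle'' and frame the whole argument around it. For this proposition that step is actually superfluous. Statements~(3) and~(4) already hand you a potential explicitly: in~(3) you are given $\alpha=d(\rho\circ f)$, so you may simply set $\varphi\defl\rho\circ f$ and apply Theorem~\ref{mainthmcal} directly; in~(4) you are given $\alpha=d\varphi$ for a specific $\varphi$ achieving $\op{lev}_{\Z_2}(\varphi,X)=\op{Lev}_{\Z_2}(X)$, and again Theorem~\ref{mainthmcal} applies verbatim. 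The chain $(3)\Rightarrow(2)\Rightarrow(1)$ only ever moves \emph{from} a statement with an explicit potential \emph{to} one without, so exactness never needs to be produced; the de Rham argument would only be needed if one claimed the converse implications $(1)\Rightarrow(3)$ or $(2)\Rightarrow(3)$, which the proposition deliberately does not assert (and which the paper's Example~\ref{closuremod2} shows to be false). Invoking de Rham also creates a small but avoidable wrinkle: the potential it produces agrees with $\rho\circ f$ (respectively with the $\varphi$ of~(4)) only up to an additive constant, so you would have to observe that the orientation-modulo-$2$ property and $\op{lev}_{\Z_2}$ are unaffected by adding a constant before identifying the two. Dropping the de Rham framing makes the proof both shorter and cleaner, and matches the paper's intent.

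One smaller remark: in $(3)\Rightarrow(2)$ you go to some length to show the comass \emph{attains} the value $1$. That is only needed in the degenerate case $m(X,d)>0$, and in the paper's convention (see the discussion around \eqref{calibreineq}) ``comass $1$'' means $\norm\alpha\le 1$, which already follows from $\rho\circ f$ being $1$-Lipschitz. The attainment observation is a harmless and true refinement, but it is not strictly required.
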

\begin{proof}
$(3)\Rightarrow (2)$ is a particularization of Proposition~\ref{mainthmbis}(1) (stated in the next section), which gives the slightly more precise information that any $f$ like in $(3)$ is actually an isometry when restricted to the segments forming $C$. The implication $(2)\Rightarrow (1)$ is trivial. The implication $(3)\Leftrightarrow(4)$ follows directly from Theorem~\ref{mainthmcal}.
\end{proof}

In general we don't have the implications $(1) \Rightarrow (2)$ and $(2) \Rightarrow (3)$ as the following examples demonstrate.

\begin{Expl}[loss of information in conditions (1) and (2)]
	\label{closuremod2}
$(1) \nRightarrow (2)$: Let $X$ be the collection of the four points $p_1 = (1,1), p_2 = (1,-1), p_3 = (-1,-1)$ and $p_4 = (-1,1)$ in $\R^2$. There are exactly two minimizers with boundary $\curr X$, namely $C_1 = \curr{p_1,p_4} + \curr{p_2,p_3}$ and $C_2 = \curr{p_1,p_2} + \curr{p_3,p_4}$. Obviously, $C_2(dx) = 0$ and $C_1(dx) = 4 = \bM(C_2)$. (1) holds for $C_1$ but not for $C_2$, hence we don't have (2).

$(2) \nRightarrow (3)$: Let $\tilde X = \B^2(0,1) \subset \R^2$ and select $X = \{(1,0),(-1,0)\}$. The function $\varphi(p) = |p|$ is $1$-Lipschitz and $C(d\varphi) = 2 = \bM(C)$ for the unique minimizer $C = \curr{(1,0),(-1,0)}$ with boundary $\curr X$. But none of the level sets $\varphi^{-1}(t)$ disconnects the two points of $X$. Hence $\op{lev}_{\Z_2}(\varphi,X) = 0 < 2 = \op{Lev}_{\Z_2}(X)$ by the $(1)\Leftrightarrow(3)$ part of Theorem~\ref{mainthmcal}. The equivalence of (3) and (4) in Proposition~\ref{defcal2justif} now implies that (3) doesn't hold.
\end{Expl}

\subsection{Generalizations}\label{gencalibrations} Let $G$ be a complete normed Abelian group. Under the hypothesis that $\{g\in G\ |\ |g| \leq M\}$ is compact for all $M > 0$ it follows by the lower-semicontinuity of mass under flat convergence that the Plateau problem is solvable for flat $G$-chains in $\mathbb R^n$, \cite[Corollary~7.5]{fleming}, i.e.\ given $S \in \cF_{k-1}(\R^n,G)$ with $\bM(S) < \infty$, $\partial S = 0$ and compact support, then there is a $T \in \cF_k(\R^n,G)$ with $\partial T = S$, compact support and
\[
\bM(T) = \inf\{\bM(C)\ |\ C \in \cF_k(\R^n,G) \text{ and } \partial C = S \}\ .
\]
Furthermore, the flat chains considered above are actually rectifiable if there exists no nonconstant Lipschitz path $\gamma:[0,1]\to G$ (see \cite[Theorem~8.1]{white}). Hence the Plateau problem is solvable for rectifiable chains with coefficients in $G$. These two conditions on $G$ stated above are true for the discrete groups $G=\mathbb Z$ and $G=\mathbb Z_2$ for which we have a duality as above, but also for more general discrete groups like $\mathbb Z_p$ or $\mathbb Z^d$, for $\mathbb Z$ endowed with the $p$-adic norm and for $\mathbb R^d$ or $S^1$ endowed with the snowflaked distance $d_\alpha(x,y) \defl |x-y|^\alpha$, $\alpha\in\ ]0,1[$. We include here some remarks about global duality questions for the problem of minimizing the length of $1$-chains in some of the cases which remain open.

\begin{Rem}[global calibrations modulo $p$ for $p\ge 3$]\label{calmod3}
Already for the coefficient group $\Z_3$ the same approach as for $\Z_2$ doesn't work anymore and $1$-Lipschitz maps into metric trees are not a suitable Ersatz for calibrating $1$-dimensional chains with coefficients in $\Z_3$.
To see this, let $X = \{\omega_1,\omega_2,\omega_3\}$ be the third roots of unity and $\curr X$ be the $0$-dimensional chain with coefficient $1 \in \Z_3$ supported on $X$. In this case the unique minimal filling of $\curr X$ is the cone $C$ spanned by $\curr X$ forming a triple junction at the origin. Assume $f : \mathbb R^2\to T$ is a $1$-Lipschitz map and ``calibrates'' the unique minimizer $C$, with level sets as sketched in Figure \ref{compatiblez3}. Then the subtree of $T$ spanned by $f(\omega_1),f(\omega_2)$ and $f(\omega_3)$ is equal to some possibly irregular tripod with total length at most $3\sqrt3/2$ where the maximum is achieved by the regular tripod with ``leg-length'' $\sqrt 3/2$ (Note that $|\omega_i - \omega_j| = \sqrt 3$ for $i \neq j$). Hence we would have
\[
\bM(f_\# C) \le \frac{3\sqrt3}{2} < 3 =\bM(C) \ ,
\]
contradicting the calibration property. This indicates that if a satisfactory notion of ``calibration modulo $3$'' (or modulo $p$ for $p>2$) can be created at all, then it must be quite different than the one appearing for $p=2$ here.

The same space $X$ also illustrates a fundamental difference between the Plateau problem for $1$-dimensional chains with coefficients in $\Z_p$, $p \geq 3$, and the one with coefficients in $\Z$ or $\Z_2$. Let $\iota : X \to T$ and $\kappa : X \to H$ be isometric embeddings into a metric tree $T$ and a Hilbert space $H$ respectively. Then
\[
\F_{\Z_3}(\iota_\# \curr X) = \frac{3 \sqrt{3}}{2} < 3 = \F_{\Z_3}(\kappa_\# \curr X) \,.
\]
Therefore the minimal filling length of the $\Z_3$-chain $\curr X$ depends on the ambient space, whereas the minimal filling length for $0$-dimensional chains with coefficients in $\Z$ or $\Z_2$ only depends on the pairwise distances between its supporting points (under the condition that the ambient space is geodesic).

\end{Rem}

\begin{figure}[htbp!]
	\centering
	\resizebox{0.5\textwidth}{!}{\includegraphics[width=0.5\textwidth]{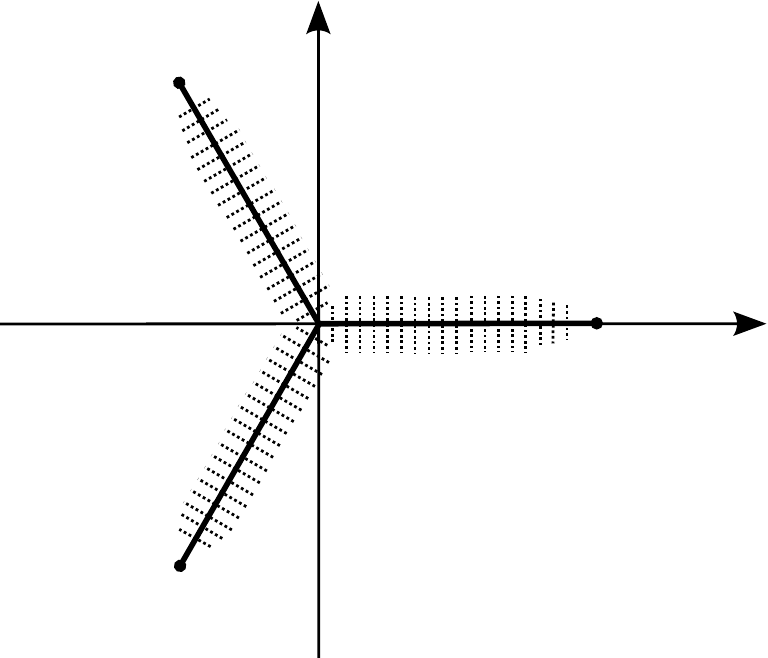}}
	\caption{We depict a solution $C$ of the Plateau problem for $1$-chains with coefficients in $\mathbb Z_3$. If $df$ were to describe a ``calibration'' similar to the ones described for $\mathbb Z_2$, then the level sets of $f$ near $\spt(C)$ would be given by the above transverse lines.}\label{compatiblez3}
\end{figure}

\begin{Rem}[calibrations for the coefficient group $(\mathbb R, d_\alpha)$]
 The minimization of mass for $1$-chains with coefficients in $\mathbb R$ endowed with the norm $d_\alpha(x,y)\defl|x-y|^\alpha$, $\alpha\in \ ]0,1[$, is exactly the same as the so-called \emph{branched optimal transport problem} or \emph{irrigation problem}. In that case a possible starting point for a duality theory is represented by the so-called \emph{landscape function} introduced in \cite{santambrogio}, see also \cite{xia}. For the description of this function and for further references we refer to these two papers.
\end{Rem}

\begin{Rem}[calibrations with coefficients in $\mathbb Z^d$ with different norms]
 A duality theory with coefficients in $\mathbb Z^d$ was introduced in \cite{marchesemassaccesi} for a different reason, and constant-coefficient calibrations were considered. In that case, if the norm on $\mathbb Z^d$ is symmetric enough the situation is analogous to the classical case $d=1$, and a duality between forms with values in $\mathbb R^d$ (also interpretable as $d$-tuples of forms and $\mathbb Z^d$-valued $1$-chains) is present. The question is relevant in crystallography problems \cite{garroni}.
\end{Rem}

\section{Proof of the main theorems}\label{proofs}
\subsection{Proof of Theorem~\ref{mainthm}}
To simplify the notation we write $1,2,\dots,2n$ for the points in $X$. The set of all matchings on $X$ is denoted by $\cM(X)$. 
The \emph{matching number} of some $\pi \in \cM(X)$ with respect to the metric $d$ is defined by
\[
m(\pi,d) \defl \sum_{\{i,j\} \in \pi } d(i,j) \ .
\]
The \emph{matching number} of $d$ then is
\[
m(X,d) \defl \min_{\pi \in \cM(X)} m(\pi,d)\ ,
\]
and a \emph{minimal matching} is some $\pi \in \cM(X)$ for which this minimum is achieved. The set of all minimal matchings is denoted by $\cM(X,d)$. We will write $\{i,j\} \in \cP(d)$ if there is a minimal matching $\pi \in \cM(X,d)$ with $\{i,j\} \in \pi$.

We denote with $\cD$ the set of pseudometrics $d'$ on $X$ with $d' \leq d$ and $m(X,d') = m(X,d)$. Given two pseudometrics $d_1$ and $d_2$ on $X$ we can associate a distance $\delta(d_1,d_2)$ by
\[
\delta(d_1,d_2) \defl \max_{i,j \in X} |d_1(i,j) - d_2(i,j)|\ .
\]
It is easy to check that $(\cD,\delta)$ is a compact metric space and the function $w : \cD \to \R$ given by
\[
w(d') \defl \sum_{i \neq j} d'(i,j)
\]
is continuous. Hence $w$ attains its minimum at some $D \in \cD$. The goal will be to show that $D$ is tree-like. By using a compactness argument like this, $D$ may be a pseudometric even if we started with a genuine metric $d$. This actually does not depend on the particular way of constructing $D$ and we can't reformulate Theorem~\ref{mainthm} using only metrics instead of pseudometrics. Indeed, consider the following example:

\begin{Expl}
	Let $X = \{1,2,3,4,5,6\}$ and $0 < \epsilon \leq 2$. For $i < j$ set,
	\[
	d(i,j) \defl
	\left\{ \begin{array}{ll}
	2 & \text{ if } 1 \leq i < j \leq 4 \ ,\\
	1 & \text{ if } 1 \leq i \leq 4 \text{ and } j= 5,6 \ , \\
	\epsilon & \text{ if } i=5, \, j=6 \ .
	\end{array} \right.
	\]
	Let $D$ be a tree-like pseudometric on $X$ with $D \leq d$ and $m(X,D) = m(X,d)$. By symmetry it is easy to check that $m(X,d) = 4$ and some matching $\pi \in \cM(X)$ is in $\cM(X,d)$ if and only if $\{5,6\} \notin \pi$. This forces $D(i,j) = d(i,j)$ unless $\{i,j\} = \{5,6\}$. Because $D$ is tree-like,
	\begin{align*}
		2 + D(5,6) & = d(1,2) + D(5,6) = D(1,2) + D(5,6) \\
		& \leq \max\{D(1,5) + D(2,6), D(1,6) + D(2,5)\} \\
		& = \max\{d(1,5) + d(2,6), d(1,6) + d(2,5)\} = 2\ ,
	\end{align*}
	and hence $D(5,6) = 0$. So $D$ can't be strictly positive.
\end{Expl}

With the definition of $D$ as an element of $(\cD,\delta)$ minimizing $w$, we get:

\begin{Lem}
	\label{minimallem}
	There is a pseudometric $D \leq d$ on $X$ with the property that for any other pseudometric $D'$ on $X$ with $D' \leq D$ and $D' \neq D$ we have $m(X,D') < m(X,D) = m(X,d)$.
\end{Lem}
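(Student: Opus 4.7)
The plan is to take $D$ to be precisely the minimizer of $w$ on $\cD$ already constructed in the discussion preceding the lemma, and show that this minimality of $w$ forces the pointwise minimality asserted in the statement. The work therefore splits into two parts: (a) verifying that such a minimizer $D$ exists, and (b) deducing the claimed property from its defining minimality.

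For part (a), I would first observe that $\cD$ is nonempty since $d \in \cD$. Compactness in the metric $\delta$ follows because $\cD$ sits inside the compact set $\prod_{i \neq j}[0,d(i,j)]$ (recall $X$ is finite), and closedness of $\cD$ reduces to checking that the conditions defining it pass to limits: the pseudometric axioms (symmetry, vanishing on the diagonal, triangle inequality) are closed conditions, $d' \leq d$ is closed, and $d' \mapsto m(X,d') = \min_{\pi \in \cM(X)} \sum_{\{i,j\}\in\pi} d'(i,j)$ is continuous (as a minimum of finitely many continuous functions), so the level set $\{m(X,\cdot) = m(X,d)\}$ is closed. Continuity of $w$ on $\cD$ is immediate from its definition as a finite sum of coordinate functions. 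Hence $w$ attains a minimum at some $D \in \cD$.

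For part (b), suppose $D'$ is a pseudometric on $X$ with $D' \leq D$ and $D' \neq D$. Since $D' \leq D \leq d$, we have $D' \leq d$, and for every $\pi \in \cM(X)$ the inequality $m(\pi,D') \leq m(\pi,D)$ gives $m(X,D') \leq m(X,D) = m(X,d)$. If the inequality were actually equality, then $D' \in \cD$; but $D' \leq D$ pointwise together with $D' \neq D$ yields $w(D') < w(D)$ strictly (some $D'(i,j) < D(i,j)$ contributes a strict decrease while no other term can increase), contradicting the minimality of $D$ among elements of $\cD$ with respect to $w$. Therefore $m(X,D') < m(X,D)$, which is the desired property.

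No step here looks like a genuine obstacle; the argument is a clean two-level minimization. The only point that requires a moment of care is the continuity of $m(X,\cdot)$ used in the closedness of $\cD$, and the observation that strictly decreasing one coordinate of a pseudometric strictly decreases $w$ — both are routine. The example preceding the lemma already justifies why the conclusion must be phrased with pseudometrics rather than metrics, so no additional work on that point is needed.
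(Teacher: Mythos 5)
Your proof is correct and is exactly the argument the paper intends: the paper constructs $D$ as a minimizer of $w$ on the compact set $\cD$ in the paragraph immediately preceding the lemma and leaves the verification implicit, and you have simply spelled out both the compactness/continuity details and the short contradiction argument that strict pointwise decrease forces $w(D') < w(D)$. Nothing to add.
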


Here is a first step which shows that there are many minimal matchings with respect to $D$. For simplicity we abbreviate $|ij| = D(i,j)$ and we denote 
\[
\cP(D) \defl \left\{\{i,j\}\ |\ \text{there exists some } \pi \in \cM(X,D)\text{ with }\{i,j\} \in \pi\right\} \ .
\]

\begin{Lem}
	\label{preplem}
	If $D$ is as in Lemma~\ref{minimallem}, then for all different points $i,j \in X$ we have $\{i,j\} \in \cP(D)$.
\end{Lem}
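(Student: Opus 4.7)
Assume for contradiction that $\{i,j\}\notin\cP(D)$ for some distinct $i,j\in X$, and choose such a pair for which $D(i,j)$ is maximal. The plan is to construct a pseudometric $D^\epsilon$ with $D^\epsilon\le D$, $D^\epsilon\ne D$ and $m(X,D^\epsilon)=m(X,D)$, which directly contradicts Lemma~\ref{minimallem}. The candidate is to decrease only the distance $D(i,j)$: set $D^\epsilon(i,j)\defl D(i,j)-\epsilon$ and $D^\epsilon(a,b)\defl D(a,b)$ otherwise. For small $\epsilon>0$ every triangle inequality of $D^\epsilon$ continues to hold except possibly for triples $(i,j,k)$ in which $\{i,j\}$ appears as a \emph{short} side, i.e.\ $D(i,k)=D(i,j)+D(j,k)$ or, symmetrically, $D(j,k)=D(i,j)+D(i,k)$. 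Triangles with $\{i,j\}$ on the long side pose no obstruction, since reducing $D(i,j)$ only loosens the inequality.

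To rule out the first obstruction, suppose $D(i,k)=D(i,j)+D(j,k)$; the second is treated identically by swapping the roles of $i$ and $j$. Given any $\pi\in\cM(X,D)$ with $\{i,k\}\in\pi$ and writing $j'$ for the partner of $j$ in $\pi$, form $\pi'\defl(\pi\setminus\{\{i,k\},\{j,j'\}\})\cup\{\{i,j\},\{k,j'\}\}$. Substituting the tight relation $D(i,k)=D(i,j)+D(j,k)$ and using $D(k,j')\le D(j,k)+D(j,j')$ gives
\[
m(\pi',D)-m(\pi,D)=D(k,j')-D(j,k)-D(j,j')\le 0,
\]
so $\pi'\in\cM(X,D)$ contains $\{i,j\}$, contradicting $\{i,j\}\notin\cP(D)$. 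Hence no minimal matching contains $\{i,k\}$, but then $D(i,k)=D(i,j)+D(j,k)>D(i,j)$ whenever $D(j,k)>0$, which in turn contradicts the maximality of $D(i,j)$ among pairs not in $\cP(D)$.

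With every such tight triangle excluded, $D^\epsilon$ is a pseudometric for sufficiently small $\epsilon>0$. Setting $\gamma\defl\min\{m(\pi,D)-m(X,D):\pi\in\cM(X),\{i,j\}\in\pi\}>0$ and choosing $\epsilon<\gamma$, any matching containing $\{i,j\}$ has $D^\epsilon$-cost strictly above $m(X,D)$, while a minimal $D$-matching (automatically avoiding $\{i,j\}$) has unchanged cost, yielding the desired contradiction with Lemma~\ref{minimallem}. The one step I expect to require care is the degenerate case $D(j,k)=0$ for some $k\ne j$: the swap still forces $\{i,k\}\notin\cP(D)$ but gives only $D(i,k)=D(i,j)$, so maximality is not violated. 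I plan to handle this either by passing to the quotient $X/{\sim}$ with $u\sim v\iff D(u,v)=0$ and applying the argument to the induced genuine metric, or by reducing $D(i,t)$ by $\epsilon$ simultaneously for every $t\in[j]\defl\{t:D(j,t)=0\}$, since this joint reduction preserves the affected degenerate triangle inequalities while the gap argument goes through unchanged.
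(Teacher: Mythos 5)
Your strategy is genuinely different from the paper's, and it is a nice one: the paper defines the larger set $\cP_{i,j}$ of all pairs ``metrically containing'' $\{i,j\}$, decreases all of them simultaneously, and relies on a structural lemma (Step~1 of the paper: $[i,j]\subset[k,l]$ and $\{k,l\}\in\cP(D)$ imply $\{i,j\}\in\cP(D)$); you instead decrease only $D(i,j)$ and use the extremal choice of a \emph{maximal} violating pair to rule out the problematic tight triangles directly. The non-degenerate core of your argument (the swap, the conclusion $\{i,k\}\notin\cP(D)$ with $D(i,k)>D(i,j)$, and the $\gamma$-gap argument) is correct, and it trades the paper's Step~1 for the maximality device. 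However, the degenerate cases — which the paper handles through the very structure of $\cP_{i,j}$ and a separate Step~3 — are where your proof has real gaps.

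First, you never establish $D(i,j)>0$ for your chosen maximal pair. You need a preliminary observation that any pair $\{a,b\}$ with $D(a,b)=0$ already lies in $\cP(D)$ (a one-line swap: if $\{a,k\},\{b,l\}\in\pi$ minimal, then $D(a,b)+D(k,l)=D(k,l)\le D(k,a)+D(a,b)+D(b,l)=D(k,a)+D(b,l)$); this is exactly Step~3 of the paper's proof. Only then can you conclude that a maximal violating pair has positive distance and that $D^\epsilon(i,j)\geq 0$.

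Second, of your two proposed repairs for the degeneracy, the quotient (a) does not go through: collapsing equivalence classes changes the cardinality of $X$ (possibly to an odd number) and, more to the point, changes the set of admissible matchings, so $m(X/{\sim},\cdot)$ does not detect the same quantity as $m(X,\cdot)$; there is no direct correspondence between minimal matchings upstairs and downstairs. The joint reduction (b) as you state it is also insufficient: if $[i]$ is nontrivial, say $D(i,c)=0$ with $c\ne i$, and $t\in[j]$, then $D(t,c)=D(t,i)$ and the triangle inequality $D(t,c)\le D(t,i)+D(i,c)$ is tight; you decrease $D(t,i)$ but not $D(t,c)$, and the inequality breaks. (The symmetric obstruction $D(j,k)=D(i,j)+D(i,k)$, degenerate when $k\in[i]$, was also left untreated.) The correct joint reduction decreases $D(s,t)$ for every $s\in[i]$ and every $t\in[j]$; one then checks by the same swap argument that all such $\{s,t\}$ lie outside $\cP(D)$ (using $D(s,t)=D(i,j)$), that the resulting tight-triangle obstructions with a third point outside $[i]\cup[j]$ again contradict maximality, and that the $\gamma$-gap argument still gives $m(X,D^\epsilon)=m(X,D)$ for small $\epsilon$. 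With those repairs your argument does close — note that this enlarged perturbation set $[i]\times[j]$ is precisely a subset of the paper's $\cP_{i,j}$, which is why the paper's construction sidesteps the degenerate case automatically.
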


\begin{proof}
	The main obstacle in obtaining this result is the violation of the triangle inequality. Assume by contradiction that for different $i, j \in X$ we have $\{i,j\} \notin \cP(D)$ (this in particular implies that $\# X \geq 4$). Since pseudometrics are those symmetric functions $d': X \times X \to \R$ determined by the inequalities $d'(a,b) \ge 0$ and $d'(a,b) + d'(b,c) \ge d'(a,c)$, the only way in which making $|ij|$ smaller makes us exit the set of pseudometrics is if $|ij|=0$ or if there exists some $k \notin \{i,j\}$ for which we have
	\begin{equation*}
		|ij| + |jk| = |ik| \quad \text{or} \quad |ji| + |ik| = |jk|\ .
	\end{equation*}
	We write $[a,b] \subset [c,d]$ for some not necessarily different $a,b,c,d \in X$ if
	\[
	|ca| + |ab| + |bd| = |cd|\ .
	\]
	The following fact is easy to check:
	\begin{equation}
	\label{intervalinclusion}
	[a,b] \subset [c,d] \quad \text{implies} \quad [a,b] \subset [a,d] \quad \text{and} \quad [a,b] \subset [c,b] \ .
	\end{equation}
\textbf{Step 1.} \emph{If $|ij|>0$, $[i,j] \subset [k,l]$ and $\{k,l\} \in \cP(D)$, then $\{i,j\} \in \cP(D)$.} This will be contradicting our assumption that $\{i,j\} \notin \cP(D)$. We now pass to prove it. If $\{k,l\} = \{i,j\}$ there is nothing to show, so we assume that $\{k,l\} \neq \{i,j\}$. We first show that $\{k,j\} \in \cP(D)$ (similarly we can show $\{i,l\} \in \cP(D)$). If $j=l$ we already have $\{k,j\}\in \cP(D)$, so we assume that $j \neq l$. Note that $k \neq j$ because $|kj| = |ki| + |ij| > 0$ by \eqref{intervalinclusion} and by the assumption $|ij| > 0$. Since $\{k,l\} \in \cP(D)$, there is a $\pi \in \cM(X,D)$ with $\{k,l\} \in \pi$. Since $\pi$ is a matching, there is some $j' \in X \setminus \{j,k,l\}$ with $\{j,j'\} \in \pi$. By the minimality of $m(\pi,D)$ we obtain
\begin{equation}
	\label{optimalest}
	|kl| + |jj'| \leq \min\{|kj| + |lj'|,|kj'| + |lj| \}\ .
\end{equation}
Otherwise we could replace the pairs $\{k,l\}$, $\{j,j'\}$ in $\pi$ by $\{k,j\}$, $\{l,j'\}$ or $\{k,j'\}$, $\{l,j\}$ to obtain a new matching with a smaller matching number, but this is not possible. Because of \eqref{intervalinclusion} we have $[k,j] \subset [k,l]$ which together with \eqref{optimalest} implies
\begin{align*}
	|kl| + |jj'| \leq |kj| + |lj'| \leq |kj| + |lj| + |jj'| = |kl| + |jj'|\ .
\end{align*}
This means that both inequalities are actually equalities and in particular $|lj'| = |lj| + |jj'|$. Hence,
\begin{equation*}
	|kj|+|lj'| = |kj| + |lj| + |jj'| = |kl| + |jj'|\ ,
\end{equation*}
because $[k,j] \subset [k,l]$. So, by replacing the pairs $\{k,l\}$, $\{j,j'\}$ in $\pi$ with $\{k,j\}$, $\{l,j'\}$ we obtain a matching $\pi'$ with the same, and therefore minimal, matching number. This implies $\{k,j\} \in \cP(D)$ as desired. Now if $k=i$ we have directly $\{i,j\} \in \cP(D)$, whereas if $k \neq i$ we have by \eqref{intervalinclusion} that $[i,j] \subset [k,j]$. Repeating the arguments above with these two intervals we obtain that $\{i,j\} \in \cP(D)$, contradicting our assumption.

\textbf{Step 2.} \emph{Proof of the lemma in case $|ij| > 0$.} For such $i,j$, define the following set of pairs
\[
\cP_{i,j} \defl \left\{\{k,l\} \in 2^X\ |\ [i,j] \subset [k,l] \text{ or } [j,i] \subset [k,l] \right\}\ .
\]
Note that $[j,i] \subset [k,l]$ is equivalent with $[i,j] \subset [l,k]$, so $\cP_{i,j}$ is well defined. For $\epsilon > 0$ define
\[
D_\epsilon(a,b) \defl|xy|_\epsilon \defl \left\{\begin{array}{ll}|ab| - \epsilon&\text{ if }\{a,b\} \in \cP_{i,j}\ ,\\ |ab|&\text{ else}\ .\end{array}\right.
\]
Since $|ij| > 0$ (and hence also $|kl| > 0$ if $\{k,l\} \in \cP_{i,j}$) we can assume that $\epsilon$ is small enough such that $D_\epsilon \geq 0$ and that $\{k,l\} \notin \cP(D)$ implies $\{k,l\} \notin \cP(D_\epsilon)$ and all strict triangle inequalities for $D$ are also strict triangle inequalities for $D_\epsilon$. By the definition of $D$ we have two possibilities: either $m(X,D_\epsilon) < m(X,D)$ or some triangle inequality of $D_\epsilon$ is violated. So assume first that $m(X,D_\epsilon) < m(X,D)$. By the choice of $\epsilon$ there is some $\{k,l\} \in \cP_{i,j}$ for which $\{k,l\} \in \cP(D)$, but this implies $\{i,j\} \in \cP(D)$ by Step 1, which in turn gives a contradiction to the initial assumption $\{i,j\}\notin \cP(D)$. Assume instead that some triangle inequality is violated for $D_\epsilon$. By the choice of $\epsilon$ this means that there are different $a,b,c \in X$ with 
\begin{equation}
\label{epsilonmetric}
|ab| + |bc| = |ac| \quad \text{and} \quad |ab|_\epsilon + |bc|_\epsilon < |ac|_\epsilon\ .
\end{equation}
In order for the strict inequality to hold at least one of the pairs $\{a,b\}$, $\{b,c\}$ needs to be in $\cP_{i,j}$. We assume $\{a,b\} \in \cP_{i,j}$, the other cases being similar. Up to switching $i$ and $j$ if necessary we may assume $[i,j] \subset [a,b]$. Now $\{a,b\} \in \cP_{i,j}$ implies
\begin{align*}
|ai| + |ij| + |jc| & \geq |ac| = |ab| + |bc| = |ai| + |ij| + |jb| + |bc| \\
 & \geq |ai| + |ij| + |jc|\ ,
\end{align*}
and hence $|ac| = |ai| + |ij| + |jc|$, which forces $\{a,c\} \in \cP_{i,j}$. This means that $|ac|_\epsilon = |ac| - \epsilon$ and $|ab|_\epsilon = |ab| - \epsilon$. In order to obtain the strict inequality in \eqref{epsilonmetric}, it is therefore necessary that $\{b,c\} \in \cP_{i,j}$. If $[i,j] \subset [b,c]$, then
\begin{align*}
|ac| & = |ab| + |bc| = (|ai| + |ij| + |jb|) + (|bi| + |ij| + |jc|) \\
& = (|ai| + |ij| + |jc|) + (|bi| + |ij| + |jb|) \geq |ac| + |ij| > |ac|\ ,
\end{align*}
since $|ij| > 0$ by assumption. If $[j,i] \subset [b,c]$, then
\begin{align*}
|ac| & = |ab| + |bc| = (|ai| + |ij| + |jb|) + (|ci| + |ij| + |jb|) \\
& \geq |ac| + 2|ij| > |ac|\ .
\end{align*}
Both of these options lead to a contradiction, as desired. Therefore the requirement $\{i,j\}\notin\cP(D)$ leads to a contradiction in case $|ij|>0$, and we conclude the proof of the Lemma in this case.

\textbf{Step 3.}\emph{ Proof of the lemma in case $|ij|=0$.} Pick any optimal $\pi \in \cM(X,D)$. Because $\pi$ is a matching and $\{i,j\} \notin \cP(D)$ there are different $k,l \in X$ with $\{i,k\} \in \pi$ and $\{j,l\} \in \pi$. Then
\[
|ij| + |kl| = |kl| \leq |ki| + |ij| + |jl| = |ki| + |jl|\ .
\]
Hence by replacing the pairs $\{i,k\}$, $\{j,l\}$ in $\pi$ with $\{i,j\}$, $\{k,l\}$ we obtain a new matching $\pi'$ with $m(\pi',D) \leq m(\pi,D)$. $m(\pi,D)$ is minimal among matchings, and therefore $\pi'$ too is a minimal matching, which witnesses the fact that $\{i,j\} \in \cP(D)$, a contradiction to the starting assumption.
\end{proof}

With this preparation we can prove the main result.

\begin{proof}[Proof of Theorem~\ref{mainthm}]
	Let $D$ be as in Lemma~\ref{minimallem}. Assume by contradiction that $(X,D)$ is not tree-like, i.e.\ renumbering the elements of $X$ if necessary,
	\begin{equation*}
		%\label{basicassumption}
		|13| + |24| > \max\{|12|+|34|,|14|+|23|\}\ .
	\end{equation*}
	By Lemma~\ref{preplem}, $\{1,3\},\{2,4\} \in \cP(D)$. This means that there are $\pi,\pi' \in \cM(X,D)$ with $\{1,3\} \in \pi$, $\{2,4\} \in \pi'$ and $m(\pi,D) = m(\pi',D) = m(X,D)$.
	We can write
	\begin{align*}
		\pi \:  & = \{\{1,3\},\{i_2,j_2\},\dots,\{i_n,j_n\}\}\ , \\
		\pi' & = \{\{2,4\},\{i'_2,j'_2\},\dots,\{i'_n,j'_n\}\}\ .
	\end{align*}
        We thus have
	\begin{equation}
	\label{sumformula}
	2m(X,D) = |13| + |24| + \sum_{m = 2}^n |i_mj_m| + |i'_mj'_m|\ .
	\end{equation}
	Every element of $X$ appears exactly twice in this sum because it is composed of two matchings.	Taking $F \defl \{\{i_m,j_m\},\{i'_m,j'_m\}\ |\ m=2,\ldots,n\}$ with repeated couples counted twice, consider the multigraphs (i.e.\ graphs with multiplicity) $(X,E)$, $(X,E_1)$ and $(X,E_2)$ given by
	\begin{eqnarray*}
		E \ &=& F \cup \{\{1,3\},\{2,4\}\}\ , \\
		E_1 &=& F \cup \{\{1,2\},\{3,4\}\}\ , \\
		E_2 &=& F \cup \{\{1,4\},\{2,3\}\}\ .
	\end{eqnarray*}
	By the remark above, every $x \in X$ has exactly two neighbors (counting multiplicities of edges) in $(X,E)$. Using this it is easy to check that the maximal connected subgraphs are cycles. Since $(X,E)$ is the union of two matchings, these cycles have even length, otherwise there would be two pairs in $\pi$ or $\pi'$ that have a point in common, which is not possible. Hence $(X,E)$ is the disjoint union of cycles of even length and with the same reasoning this is also true for the graphs $(X,E_1)$ and $(X,E_2)$. We consider separately the cases in which in $(X,E)$ the edges $\{1,3\}$ and $\{2,4\}$ belong to the same cycle or to different cycles.
	
	\begin{figure}[htbp!]
		\centering
		\resizebox{0.9\textwidth}{!}{\includegraphics[width=0.9\textwidth]{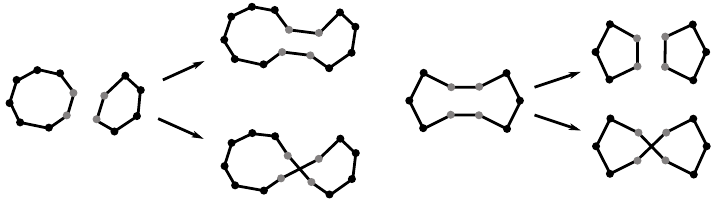}}
		\caption{We show what can happen going from $(X,E)$ to $(X, E_1), (X, E_2)$ in the two possible cases. The points $1,2,3,4$ are drawn in grey.}\label{figcycles}
	\end{figure}
	
	If $\{1,3\}$ and $\{2,4\}$ belong to different cycles of $(X,E)$ of lengths $2r, 2s$, then in both $(X, E_1)$ and $(X, E_2)$ the points $1,2,3,4$ belong to a single cycle of length $2r+2s$.
	
	If $\{1,3\}$ and $\{2,4\}$ belong to the same cycle $C$ of $(X,E)$ of length $2r$ then removing those edges from $C$ we are left with two paths $P, P'$ connecting either $1$ with $2$ and $3$ with $4$, or $1$ with $3$ and $2$ with $4$. $P$ and $P'$ belong to both $(X,E_1)$ and $(X, E_2)$ and have an added total length of $2r-2$. Moreover, in the first case, in $(X,E_1)$ the edges $\{\{1,2\},\{3,4\}\}\cup P\cup P'$ form a cycle of length $2r$ and in the second case the same is true for $(X,E_2)$.
	
	Therefore, in either case, one of $(X,E_1)$ or $(X,E_2)$ is a union of disjoint cycles of even lengths and by splitting (arbitrarily) each cycle into two sets of disjoint edges we obtain two matchings $\sigma$ and $\sigma'$. Comparing with \eqref{sumformula} leads to
	\begin{align*}
		m(\sigma,D) & + m(\sigma',D) \\
		& \leq \max\{|12|+|34|,|14|+|23|\} + \sum_{m = 2}^n |i_mj_m| + |i'_mj'_m| \\
		& < |13| + |24| + \sum_{m = 2}^n |i_mj_m| + |i'_mj'_m| = 2m(X,D)\ .
	\end{align*}
	But by the definition of the matching number, $m(\sigma,D) \geq m(X,D)$ and $m(\sigma',D) \geq m(X,D)$, a contradiction.
\end{proof}

Note that the tree furnished in the theorem is generally not unique, as shown in Figure~\ref{nonunique}.
\begin{figure}[htbp!]
	\centering
	\resizebox{0.8\textwidth}{!}{\includegraphics[width=0.8\textwidth]{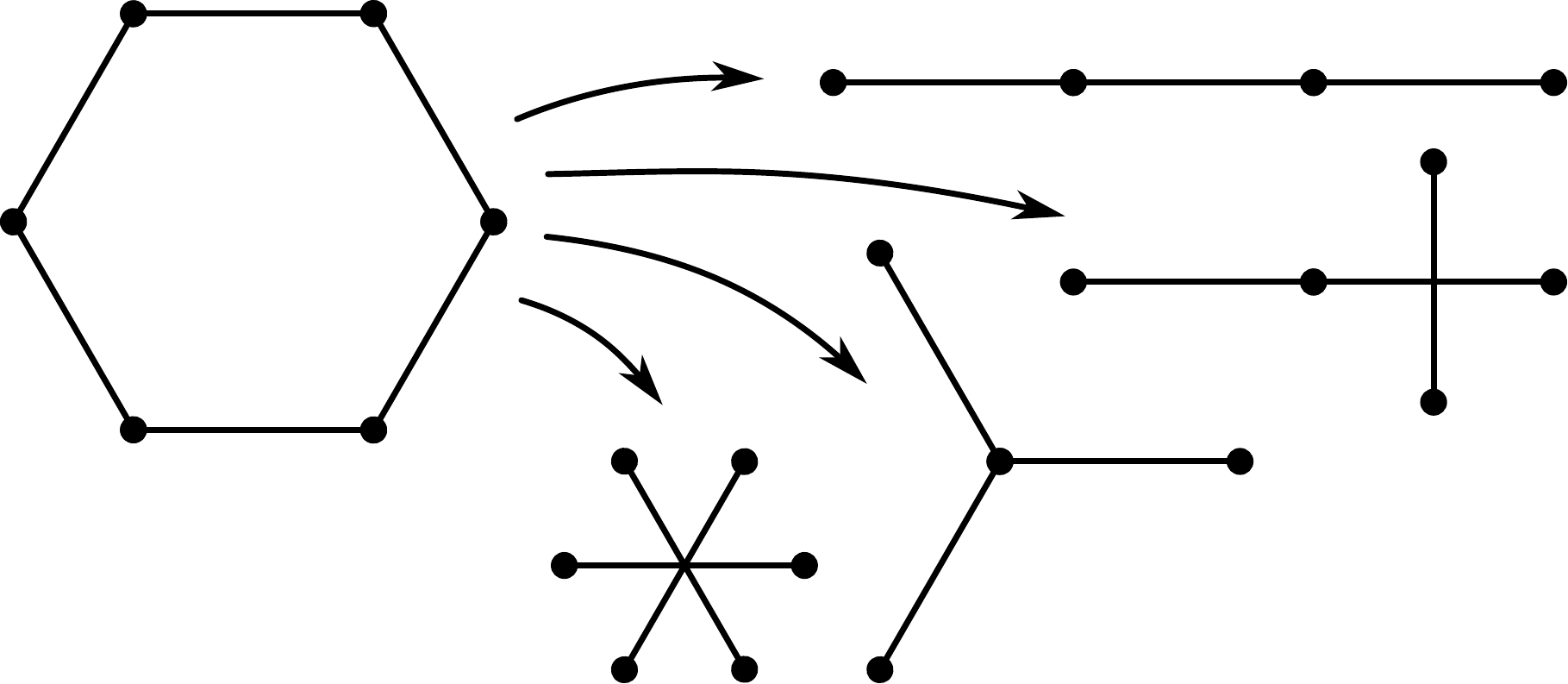}}
	\caption{Consider the cyclic graph with six vertices and the combinatorial (integer-valued) distance represented in the higher left corner. Shown on the right are the four possible metric trees of Theorem~\ref{mainthm}. If we perturb the combinatorial distance by adding further edges, then less and less of these trees stay admissible. The star-like tree with $2n$ points at distance $1/2$ from the center is admissible for the complete graph, and thus for any graph with $2n$ vertices and a matching of length $n$.}\label{nonunique}
\end{figure}

\subsection{Structure of the constructed tree}

The tree-like pseudometric $D$ of Theorem~\ref{mainthm} has some special features which we will discuss in this part. As noted in the introduction, there is a distance preserving map $f : (X,D) \to (T,d_T)$ into some complete metric tree, i.e.\ $d_T(f(x),f(x'))=D(x,x')$ for all $x,x' \in X$. Since $D \leq d$, this in particular defines a $1$-Lipschitz map $f : (X,d) \to (T,d_T)$. Complete metric trees are injective, see e.g.\ \cite[Lemma~2.1]{L} for a simple proof. As such, whenever the finite space $(X,d)$ is realized as a subspace of some metric space $(\tilde X,d)$, there is a $1$-Lipschitz extension $f : (\tilde X,d) \to (T,d_T)$. For a matching $\pi$ of $X$ and a map $f : X \to T$ into a tree we define the set
\[
A_\pi \defl \bigcup_{\{x,y\} \in \pi} [f(x),f(y)] \subset T \ .
\]
We will also use the set $A_X \subset T$ as defined in \eqref{defamxd}. For points $u,v,w$ in a tree $T$ we denote by $c(u,v,w)$ the intersection of the three open arcs $]u,v[$, $]v,w[$ and $]w,u[$. For a map $g : Y \to T$ defined on a set $Y$ we denote
\begin{equation}\label{vfx}
V_g(Y) \defl \{c(g(x),g(y),g(z))\ |\ x,y,z\in Y\} \subset T\ .
\end{equation}
This set equals the set of internal vertices of the subtree in $T$ spanned by $g(Y)$. This is straightforward to see once we note that $c(u,v,w)$ is empty if one of $u,v,w$ is included in the closed segment formed by the others, and is the internal vertex of the tripod spanned by $u,v,w$ otherwise.

We say that $(T,d_T)$ is a \emph{minimal tree} for the pseudometric space $(X,d)$ if there is a $1$-Lipschitz map $f : (X,d) \to (T,d_T)$ such that $D = f^* d_T$ is minimal as in Lemma~\ref{minimallem} and $T$ is spanned by $f(X)$.

\begin{Prop}
	\label{mainthmbis}
	For any pseudometric $d$ on a set $X$ with $\# X = 2n$, there is a metric tree $(T,d_T)$ and a $1$-Lipschitz map $f : X \to T$ such that $m(X,f^*d_T) = m(X,d)$. Assuming such a map, let $\pi \in \cM(X,f^*d_T)$ (note that $\cM(X,d) \subset \cM(X,f^*d_T)$). Then we have the following properties:
	\begin{enumerate}
		\item For a pair $\{x,y\}$ that appears in a minimal matching of $(X,d)$, there holds $d_T(f(x),f(y)) = d(x,y)$. Assume further $(\tilde X,d)$ contains $X$ as a subset, $f : \tilde X \to T$ is a $1$-Lipschitz extension and $[x,y]$ is a geodesic segment connecting $x$ with $y$ in $\tilde X$, then the restriction $f : [x,y] \to [f(x),f(y)]$ is an isometry.  \label{1pa}
		\item For different matches $\{x,y\}$ and $\{x',y'\}$ in $\pi$, the arcs $[f(x),f(y)]$ and $[f(x'),f(y')]$ have at most one common point. Hence $\cH^1(A_{\pi}) = m(X,d)$. \label{2pa}
		\item $A_\pi \subset A_{\pi'}$ for any other matching $\pi'$ of $X$. In particular $A_\pi = A_X$ and $A_\pi = T$ if $(T,d_T)$ is a minimal tree. \label{3pa}
		\item For all points $p \in A_\pi \setminus V_f(X)$ there are components $C$ of $A_\pi \setminus \{p\}$ for which $\#\{x \in X\ |\ f(x) \in C\}$ is odd. \label{4pa}
	\end{enumerate}
\end{Prop}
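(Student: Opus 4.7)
Start by constructing $(T,d_T,f)$ via Theorem~\ref{mainthm}: take a tree-like pseudometric $D\le d$ which is minimal in the sense of Lemma~\ref{minimallem}, realize $(X,D)$ isometrically inside a complete metric tree, and take $T$ to be the subtree spanned by $f(X)$, so that $(T,d_T)$ is a minimal tree as defined above. Then $f$ is $1$-Lipschitz with $m(X,f^*d_T)=m(X,D)=m(X,d)$, and any minimal matching $\pi_d$ of $(X,d)$ is also minimal for $f^*d_T=D$, so $\cM(X,d)\subset\cM(X,f^*d_T)$. Assertion (1) comes from the chain
\[
m(X,d)=\sum_{\{x,y\}\in\pi_d}d(x,y)\ge\sum_{\{x,y\}\in\pi_d}D(x,y)\ge m(X,D)=m(X,d),
\]
which is forced to be an equality pair by pair. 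The extension statement is then immediate: $f|_{[x,y]}$ is $1$-Lipschitz between two points of $T$ at $T$-distance equal to the length of $[x,y]$, so it must parametrize the unique tree geodesic $[f(x),f(y)]$ isometrically.

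For (2), if two distinct arcs of $\pi$ share more than one point they must, in a tree, overlap on a subarc of positive length $\ell$; the four-point condition then gives an alternative pairing of the four endpoints whose $d_T$-sum is smaller by exactly $2\ell$, and swapping this into $\pi$ contradicts $\pi\in\cM(X,f^*d_T)$. Distinct arcs of $\pi$ therefore meet only at isolated points, which immediately gives $\cH^1(A_\pi)=\sum_{\{x,y\}\in\pi}d_T(f(x),f(y))=m(X,d)$.

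The crux is (4). Because $T$ is spanned by $f(X)$, every branch point of $T$ (valence $\ge 3$) is the central point of a tripod on three elements of $f(X)$ and therefore lies in $V_f(X)$. Hence $p\in A_\pi\setminus V_f(X)$ has valence $\le 2$ in $T$: either $p=f(z)$ is a leaf, in which case $A_\pi\setminus\{p\}$ contains $2n-1$ of the points $f(x)$ and some component of $A_\pi\setminus\{p\}$ must carry odd parity; or $p$ is a regular interior point, in which case (2) forces exactly one arc $[f(x_1),f(y_1)]\in\pi$ to pass through $p$, for any second such arc would enter $p$ along one of the only two directions available and thus share a positive-length subarc with the first, violating (2). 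Removing $p$ then splits $T$ into two components with $f(x_1)$ and $f(y_1)$ on opposite sides, while every other pair of $\pi$ sits entirely inside one component, producing $X$-counts of the form $1+\text{even}$ in each, hence odd.

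Property (3) is obtained from (4) by contrapositive: for $p\in A_\pi\setminus V_f(X)$ and any matching $\pi'$ of $X$, if $p\notin A_{\pi'}$ then no arc of $\pi'$ crosses $p$, so each pair of $\pi'$ sits inside a single component of $T\setminus\{p\}$, forcing every component to have even $X$-count and contradicting (4). For $p\in A_\pi\cap V_f(X)$, approximate $p$ by a sequence $p_n\in A_\pi\setminus V_f(X)$ (possible since $V_f(X)$ is finite while $A_\pi$ is a finite union of arcs) and invoke closedness of $A_{\pi'}$. The final identifications are formal: applying (3) in both directions to $\pi$ and any $\pi_d\in\cM(X,d)\subset\cM(X,f^*d_T)$ gives $A_\pi=A_{\pi_d}$ and hence $A_\pi=A_X$; and when $T$ is the minimal tree, Lemma~\ref{preplem} supplies, for each pair $\{x,y\}\subset X$, some $\pi_{xy}\in\cM(X,f^*d_T)$ containing it, whence $[f(x),f(y)]\subset A_{\pi_{xy}}=A_\pi$, and the union over all pairs exhausts $T$. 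The principal obstacle is the structural analysis at $p$ in (4): identifying branch points of the spanned tree with $V_f(X)$ and excluding a second $\pi$-arc through a valence-$2$ point via (2); once that is done, the parity argument and the closedness trick in (3) are routine.
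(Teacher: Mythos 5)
Your construction and the proof of each part track the paper's proof closely: part~\eqref{1pa} is the same pair-by-pair forcing of equality in $m(\pi_d,d)\ge m(\pi_d,f^*d_T)\ge m(X,f^*d_T)=m(X,d)$; part~\eqref{2pa} is the same exchange argument using the tree structure to contradict minimality of $\pi$ when two arcs overlap on a positive-length subarc; and parts~\eqref{3pa}--\eqref{4pa} rest on the same branch-point/parity bookkeeping. The only organizational difference is that you prove \eqref{4pa} first and then deduce \eqref{3pa} from it (plus a finite-exceptional-set approximation), whereas the paper proves \eqref{3pa} directly by choosing a subarc $]a,b[\subset A_\pi\setminus A_{\pi'}$ avoiding the finite set $V_f(X)$ and running the same parity count, and then reads off \eqref{4pa} as a short afterthought. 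Your version is cleaner conceptually; the paper's is more economical because the same arc-selection step does double duty.

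Two small points you should tighten. First, the proposition asserts \eqref{1pa}--\eqref{4pa} for \emph{any} $(T,d_T,f)$ with $m(X,f^*d_T)=m(X,d)$, not only for the specific minimal tree you build at the start; your branch-point analysis in~\eqref{4pa} says ``every branch point of $T$ lies in $V_f(X)$'', which uses that $T$ is spanned by $f(X)$. This is harmless but should be stated as a reduction: $A_\pi$, $V_f(X)$ and $d_T$ restricted to $f(X)$ all live in the subtree $T'\subset T$ spanned by $f(X)$, so one may as well replace $T$ by $T'$ (equivalently, branch points of $A_\pi$ are automatically of the form $c(f(x),f(y),f(z))$, so valence in $A_\pi$, not in $T$, is what matters). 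Second, in the interior-point case of~\eqref{4pa} you count parities in the two sides of $T\setminus\{p\}$, but the claim is about components of $A_\pi\setminus\{p\}$; since every $f(z)$ is an endpoint of some $\pi$-arc and hence lies in $A_\pi$, an odd total on one $T$-side forces at least one $A_\pi$-component on that side to be odd, so the conclusion is unaffected — just make the passage explicit. With these glosses the argument is complete.
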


\begin{proof}
	As before we abbreviate $|xy| = f^* d_T$. Let $\pi'$ be a minimal matching for $(X,d)$. By assumption, $m(\pi,d) = m(X,f^*d_T)$ and hence $d(x,y)=d_T(f(x),f(y))$ for $\{x,y\} \in \pi$, otherwise we would have $m(X,f^*d_T) < m(X,d)$. Let $f : \tilde X \to T$ be any $1$-Lipschitz extension. If $\{x,y\} \in \pi$ and $[x,y]$ is a geodesic in $\tilde X$ connecting $x$ with $y$, then $d_T(f(x'),f(y')) = d(x',y')$ for any two points $x',y' \in [x,y]$ since $f$ is $1$-Lipschitz and $d_T(f(x),f(y)) = d(x,y)$. This shows \eqref{1pa}.
	
	Let $\{x,y\}$ and $\{x',y'\}$ be two different pairs in $\pi$. Indeed, if the intersection $[f(x),f(y)] \cap [f(x'),f(y')]$ would contain more than one point, it would contain a nontrivial arc. But then
	\[
	\min\{|xx'| + |yy'|, |xy'| + |x'y|\} < |xy| + |x'y'|\ ,
	\]
	and by replacing the pairs $\{p,q\}$, $\{p',q'\}$ in $\pi$ with $\{p,p'\}$, $\{q,q'\}$ or $\{p,q'\}$, $\{p',q\}$ we obtain a new matching $\pi'$ with $m(\pi',f^* d_T) < m(\pi,f^* d_T)$, which is not possible. This proves \eqref{2pa}. Moreover it implies $\cH^1(A_{\pi}) = m(X,d)$.

	To prove \eqref{3pa} it suffices to prove that $A_\pi \subset A_{\pi'}$ for any matching $\pi'$ of $X$. This then shows that $A_\pi = A_X$. In case $T$ is a minimal tree, then any pair $\{x,y\}$ is contained in a minimal matching of $(X,f^*d_T)$ by Lemma~\ref{preplem}. Since $T$ is also spanned by $f(X)$ this shows that $A_\pi = T$. Assume by contradiction that $A_{\pi} \setminus A_{\pi'}$ is nonempty. Let $T' \subset T$ be the subtree spanned by $f(X)$. Since both $A_\pi$ and $A_{\pi'}$ are finite unions of compact arcs, there is a nontrivial arc $]a,b[\ \subset A_{\pi} \setminus A_{\pi'}$. Since $T'$ is a finite tree, we can assume that $]a,b[$ does not intersect the set $V_f(X)$ defined in \eqref{vfx}. Hence, $T' \setminus ]a,b[$ consists of exactly two components. Denote by $B$ one of these components and let $Y \subset X$ be those points that get mapped into $B$ by $f$. Since $]a,b[ \cap A_{\pi'}$ is empty, $Y$ contains an even number of points. Otherwise there would be a matching $\{x,y\} \in \pi'$ with $]a,b[\ \subset [f(x),f(y)]$. Since $]a,b[$ doesn't contain any vertices of $T'$ and $]a,b[ \subset A_\pi$, \eqref{2pa} implies that there is exactly one matching $\{x,y\} \in \pi$ with $]a,b[\ \subset [f(x),f(y)]$. Hence $Y$ is odd, which gives a contradiction. Hence, $A_{\pi} \subset A_{\pi'}$ and with the same argument also $A_{\pi'} \subset A_{\pi}$. This establishes \eqref{3pa}.
	
	Let $A$ be a connected component of $A_{\pi}$. By \eqref{2pa}, $\# \{x \in X\ | \ f(x) \in A\}$ is even and for $p \in A \setminus V_f(X)$ any of the two connected components $C$ of $A \setminus \{p\}$ the number $\# \{x \in X\ | \ f(x) \in C\}$ is odd. This proves \eqref{4pa}.
	
\end{proof}

\begin{figure}[htbp!]
	\centering
	\resizebox{0.8\textwidth}{!}{\includegraphics[width=0.8\textwidth]{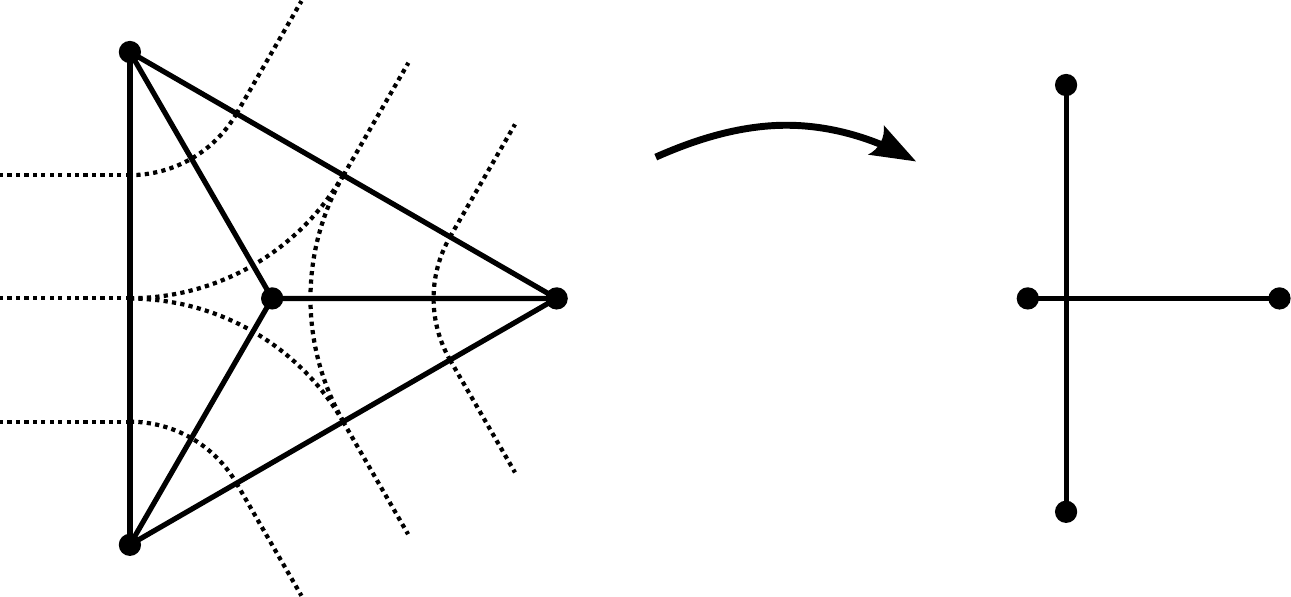}}
	\caption{Illustrated is a $1$-Lipschitz map $f : \R^2 \to T$ as in Proposition~\ref{mainthmbis} corresponding to the set $X$ displayed by the four dots and mutual geodesics by thick lines on the left. The dotted lines indicate some possible level sets.}\label{figcollapse}
\end{figure}

\subsection{Proof of Theorem~\ref{mainthmcal}}
\label{proofofmainthmcal}

Recall that given a metric space $\tilde X$, the notation $H_1^{(\Lip)}(\tilde X)$ represents the first singular (Lipschitz) homology group of $\tilde X$. In particular the condition $H_1^{\Lip}(\tilde X)=0$ means that in $\tilde X$ all Lipschitz cycles are boundaries of Lipschitz $2$-chains. Similarly, $\pi_1^{(\Lip)}(\tilde X) = 0$ means that for any (Lipschitz) map $\gamma : S^1 \to \tilde X$ there is a (Lipschitz) extension $\Gamma : \B^2(0,1) \to \tilde X$ with $\Gamma|_{S^1} = \gamma$. Apart from the construction of the tree in \cite{Z} we will make use of the following lemma, which is probably well known. 

\begin{Lem}[\cite{Z}, Lemma~3.4]
	\label{disconnectlem}
Let $\tilde X$ be a connected and locally (Lipschitz) path-connected space with $H_1^{(\Lip)}(\tilde X) = 0$. Assume that $C \subset \tilde X$ is a closed set that disconnects two points $x$ and $x'$ in $\tilde X$. Then there is a connected component of $C$ that disconnects $x$ and $x'$.
\end{Lem}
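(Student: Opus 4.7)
The plan is to argue by contradiction: assuming no connected component of $C$ disconnects $x$ from $x'$, I will use Zorn's lemma to extract a minimal closed disconnecting subset $C_0 \sub C$, show via Mayer--Vietoris that $C_0$ must be connected, and then observe that $C_0$ lies inside some connected component of $C$, which therefore disconnects $x$ from $x'$, contradicting the standing assumption.

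For the Zorn step, consider the poset $\cF$ of closed subsets of $C$ that still disconnect $x$ from $x'$ in $\tilde X$, ordered by reverse inclusion. To check that decreasing chains have upper bounds in $\cF$, let $\{F_\alpha\}_\alpha$ be a chain and set $F_\infty \defl \bigcap_\alpha F_\alpha$; if there were a (Lipschitz) path $\gamma : [0,1] \to \tilde X \sm F_\infty$ from $x$ to $x'$, the compact image $\gamma([0,1])$ would be covered by the nested open family $\{\tilde X \sm F_\alpha\}_\alpha$ and, by compactness, already contained in a single $\tilde X \sm F_{\alpha_0}$, contradicting $F_{\alpha_0} \in \cF$. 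Hence $F_\infty \in \cF$ and Zorn produces a minimal element $C_0$.

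To prove that $C_0$ is connected, suppose instead that $C_0 = C_1 \sqcup C_2$ with $C_1, C_2$ nonempty, disjoint, and closed in $\tilde X$. By the minimality of $C_0$ neither $C_1$ nor $C_2$ belongs to $\cF$, so $x$ and $x'$ lie in the same (Lipschitz) path-component of both $\tilde X \sm C_1$ and $\tilde X \sm C_2$. The open sets $A \defl \tilde X \sm C_1$ and $B \defl \tilde X \sm C_2$ satisfy $A \cup B = \tilde X$ and $A \cap B = \tilde X \sm C_0$, so the Mayer--Vietoris exact sequence in (Lipschitz) singular homology reads
\[
 H_1^{(\Lip)}(\tilde X) \to H_0^{(\Lip)}(\tilde X \sm C_0) \to H_0^{(\Lip)}(A) \oplus H_0^{(\Lip)}(B) \,.
\]
The leftmost group vanishes by hypothesis, so the next map is injective. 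The class $[x] - [x']$ in $H_0^{(\Lip)}(\tilde X \sm C_0)$ maps to $0$ in the target, hence $[x] = [x']$ there, meaning $x$ and $x'$ are in the same path-component of $\tilde X \sm C_0$; this contradicts $C_0 \in \cF$. Therefore $C_0$ is connected, and, being a connected subset of $C$, it is contained in some connected component $K$ of $C$. Then $K \supseteq C_0$ still disconnects $x$ from $x'$, contradicting the standing assumption.

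The main point requiring care is justifying Mayer--Vietoris in the Lipschitz setting: one needs that any Lipschitz singular chain can be barycentrically subdivided into Lipschitz simplices each with image in $A$ or in $B$, which is standard from a Lebesgue-number argument once the local Lipschitz path-connectedness of $\tilde X$ is available. The Zorn step is elementary but depends crucially on compactness of the image of a path in order to collapse a nested open cover to a single member.
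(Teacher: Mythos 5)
Your argument is correct, and it is essentially the standard proof of this fact, which the paper itself does not reproduce but imports from \cite{Z}: a Zorn's lemma reduction to a minimal closed separator $C_0$ (with the compactness-of-the-path-image argument making the chain step work), a Mayer--Vietoris sequence for the open cover $\tilde X\sm C_1$, $\tilde X\sm C_2$ together with $H_1^{(\Lip)}(\tilde X)=0$ to force $C_0$ to be connected, and local (Lipschitz) path-connectedness to identify connected components of open sets with (Lipschitz) path components so that ``disconnects'' and ``$[x]\neq[x']$ in $H_0^{(\Lip)}$'' agree. The two points you flag are indeed the only ones needing care, and your treatment of them (barycentric subdivision preserves Lipschitz chains, so Mayer--Vietoris holds in the Lipschitz setting; nested open covers of a compact path image collapse to one member) is adequate.
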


Additionally, the following facts will be used in the proof of Theorem~\ref{mainthmcal}. Let $C = \curr \Gamma \in \cR_1(Y,\Z_2)$ where $\Gamma\subset Y$ is an $\cH^1$-rectifiable subset of $Y$, and let $f : Y \to Z$ and $g : Z \to \R$ be Lipschitz maps. As in \cite[Subsection~3.5]{depauwhardt}, the push-forward $f_\# C \in \cR_1(Z,\Z_2)$ is defined by $\sum_{i=1}^\infty \curr{(f\circ \gamma_i)(K_i)}$, where each $f\circ \gamma_i : K_i \to Z$ is bi-Lipschitz, $K_i \subset \R$ is compact, the images $\gamma_i(K_i) \subset \Gamma$ are pairwise disjoint and $\cH^1(f(\Gamma \setminus \cup_{i=1}^\infty \gamma_i(K_i))) = 0$. Then
\begin{equation}
\label{pushforwardestimate}
C(d(g \circ f)) \geq \sum_i \int_{K_i} |(g \circ f \circ \gamma_i)'(t)| \, d\cH^1(t) = (f_\# C)(dg) \ .
\end{equation}

Let $X \subset \tilde X$ be a set consisting of an even number of points in a geodesic metric space $(\tilde X, d)$, then as noted in the beginning of Subsection~\ref{platmodp},
\begin{equation}
\label{massminimizer}
\F_{\Z_2}(\curr X) = m(X,d) \ ,
\end{equation}
and the minimum is achieved if $C=\sum_{i=1}^n\curr{x_i, y_i}$ where $[x_i,y_i]$ are geodesic segments and $\{\{x_i,y_i\},1\le i\le n\}$ is a minimal matching for $(X,d)$.

Let $\gamma : [0,1] \to T$ be a Lipschitz curve into some metric tree $(T,d_T)$. Then
\begin{equation}
\label{chainsinatree}
\gamma_\# \curr{0,1} = \curr{\gamma(0),\gamma(1)} \text{ in } \cR_1(T, \Z_2) \ .
\end{equation}
This is an immediate consequence of the fact that $\gamma_\#\curr{S^1} = 0$ for every closed Lipschitz curve $\gamma: S^1 \to T$. Any such $\gamma$ has a Lipschitz extension $g : \B^2(0,1) \to T$ with $\op{im}(g) = \op{im}(\gamma)$ (for example, let $q \in \op{im}(\gamma)$ and define $g(te) \defl [q,\gamma(e)](t)$). This implies $\cH^2(\op{im}(g)) = \cH^2(\op{im}(\gamma)) = 0$ and hence $\gamma_\#\curr{S^1} = \partial (g_\#\curr{\B^2(0,1)}) = 0$.

\begin{proof}[Proof of Theorem~\ref{mainthmcal}:]
$\mathbf{(1) \Rightarrow (2)}$:
Assume that $\# X = 2n$ and let $f$ and $\rho$ be as in (1) and $C \in \cR_1(\tilde X,\Z_2)$ with $\partial C = \curr X$. Assume first that $C = \sum_{i = 1}^n {\gamma_i}_\# \curr{0,1}$, where $\gamma_i : [0,1] \to X$ are Lipschitz curves with $\gamma_i(t) \in X$ for all $i$ and $t = 0,1$. Then $C_T \defl f_\# C$ is a $1$-chain in $T$ with $\partial (C_T) = f_\#\curr X$. From \eqref{chainsinatree} it follows that
\[
C_T = \sum_{i=1}^n \curr{f(\gamma_i(0)),f(\gamma_i(1))} \ .
\]
Let $\pi$ be a minimal matching for $m(X,f^*d_T)$. From Proposition~\ref{mainthmbis}(4) it follows that for all $p \in A_\pi \setminus V_f(X)$, there is a component $A_{odd}$ of $A_\pi \setminus \{p\}$ such that $\#\{x \in X\ |\ f(x) \in A_{{odd}}\}$ is odd. Hence, $p \in \spt(C_T)$ and therefore $A_\pi \subset \spt(C_T)$. Since $\rho$ is an orientation modulo $2$ for $A_X$, this shows together with Proposition~\ref{mainthmbis}(2), \eqref{pushforwardestimate} and \eqref{massminimizer} that
\[
\F_{\Z_2}(\curr X) = m(X,d) = \cH^1(A_\pi) \leq C_T(d\rho) \leq C(d(\rho \circ f)) \ .
\]
This shows (2) for $C$ and by a simple argument for any Lipschitz chain. The general case follows by approximation.

$\mathbf{(1) \Rightarrow (3)}$:
Let $f$ and $\rho$ be as in (1) and let $\pi$ be a minimal matching of $(X,d)$, i.e.\ $m(\pi,d) = m(X,d)$. Let $A \subset T \setminus f(X)$ be some set and $\{x,y\} \in \pi$. If $x$ and $y$ are in the same component $C$ of $\tilde X \setminus f^{-1}(A)$, then $f(C)$ is a connected set containing $f(x)$ and $f(y)$ but does not intersect $A$. Since a set in $T$ is connected if and only if it is arcwise connected, $A$ doesn't intersect $[f(x),f(y)]$. On the other side, if $f(x)$ and $f(y)$ are in the same component of $T \setminus A$, then $A$ doesn't intersect $[f(x),f(y)]$ and since $f : [x,y] \to [f(x),f(y)]$ is an isometry by Proposition~\ref{mainthmbis}(1), $[x,y]$ does not intersect $f^{-1}(A)$. Hence $x$ and $y$ are in the same connected component of $\tilde X \setminus f^{-1}(A)$. This shows that for a set $A \subset T \setminus f(X)$,
\begin{equation}
\label{disconnection}
\begin{array}{c} A \text{ disconnects } f(x) \text{ and } f(y) \text{ in } T \text{ if and} \\ \text{only if } f^{-1}(A) \text{ disconnects } x \text{ and } y \text{ in } \tilde X. \end{array}
\end{equation}
Now assume further that $A \subset T \setminus V_f(X)$ is closed and connected. Then by Proposition~\ref{mainthmbis}(2), $A$ intersects at most one arc $[f(x),f(y)]$ for $\{x,y\} \in \pi$. If $A \cap [f(x),f(y)]$ is nonempty for some $\{x,y\} \in \pi$, \eqref{disconnection} shows that $f^{-1}(A \cap A_\pi)$ disconnects $x$ and $y$ in $\tilde X$ while all other matches in $\pi$ are not disconnected by $f^{-1}(A)$. From Lemma~\ref{disconnectlem} it follows that there is at least one connected component of $f^{-1}(A \cap A_\pi)$ that disconnects $x$ and $y$ and hence
\begin{align} 
	\nonumber
\op{Cut}_{\mathbb Z_2}(f^{-1}(A),X) & = \op{Cut}_{\mathbb Z_2}(f^{-1}(A \cap A_\pi),X) \\
\label{cutfbound}
 & = 
\left\{
\begin{array}{ll}
n_A \geq 1  & \mbox{if } A \cap A_\pi \neq \emptyset \ , \\
0 & \mbox{if } A \cap A_\pi = \emptyset \ .
\end{array}
\right.
\end{align}
This in particular holds for $A$ consisting of a single point outside $V_f(X)$. From the definition in \eqref{vfx} we see that $V_f(X)$ is a finite set, and by Proposition~\ref{mainthmbis}(2) we have $\cH^1(A_\pi) = m(X,d)$, therefore
\begin{equation} \label{levbound}
m(X,d) = \cH^1(A_\pi) \leq \int_{A_\pi} \op{Cut}_{\mathbb Z_2}(f=q,X) \, d\mathcal H^1(q) \ .
\end{equation}
From the area formula and since $\rho$ is $1$-Lipschitz it follows,
\[
\int_{\R} \#\bigl(\rho^{-1}(t) \cap A_\pi\bigr) \, dt = \int_{A_\pi} J(\rho|_{A_\pi})(q) \, d\cH^1(q) \leq \cH^1(A_\pi) < \infty \ .
\]
This shows that $\#(\rho^{-1}(t) \cap A_\pi)$ is finite for almost every $t$. Fix some $t \notin \rho(V_f(X))$ and let $\mathcal A_t$ be the collection of connected components of $\rho^{-1}(t)$ in $T$. Since any $A \in \mathcal A_t$ intersects $A_\pi$ in at most one point (otherwise $J(\rho|_{A_\pi})$ would vanish on some interval, contradicting the hypothesis that $\rho$ is an orientation modulo $2$, as $A_\pi\subset A_X$) we obtain by \eqref{cutfbound},
\begin{align}
	\nonumber
\op{Cut}_{\mathbb Z_2}(\varphi = t,X) & = \sum_{A \in \mathcal A_t} \op{Cut}_{\mathbb Z_2}(f^{-1}(A), X) \\
\label{cutphif}
 & = \sum_{q \in \rho^{-1}(t) \cap A_\pi} \op{Cut}_{\mathbb Z_2}(f = q, X) \ .
\end{align}
Applying the area formula with $g(q) \defl \op{Cut}_{\mathbb Z_2}(f=q,X)$ together with \eqref{cutphif} we get
\begin{align*}
\int_{A_\pi} J(\rho|_{A_\pi})(q) g(q) \, d\mathcal H^1(q) & = \int_{\R} \sum_{q\in\rho^{-1}(t) \cap A_\pi} g(q) \, dt \\
 & = \op{lev}_{\mathbb Z_2}(\varphi,X)\ .
\end{align*}
By the definition of $\rho$ there holds $J(\rho|_{A_\pi})(q) = 1$ for $\cH^1$-a.e.\ $q \in A_\pi$. With \eqref{massminimizer} and \eqref{levbound} we conclude that
\begin{equation}\label{maz}
\F_{\mathbb Z_2}(\curr X) = m(X,d) \leq \op{lev}_{\mathbb Z_2}(\varphi,X) \leq \Lev_{\mathbb Z_2}(X)\ .
\end{equation}
Next we show that $\Lev_{\mathbb Z_2}(X) \leq \F_{\mathbb Z_2}(\curr X)$ holds. Indeed, let $\Gamma$ be a geodesic segment that connects $x$ with $y$ in $\tilde X$ and $g : \tilde X \to \R$ be $1$-Lipschitz. Then via the area formula there holds
\begin{equation}
\label{jacobianbound}
\int_{\mathbb R} \# (g^{-1}(t) \cap \Gamma) \,dt = \int_\Gamma J(g|_{\Gamma})(s) \,d\mathcal H^1(s) \leq \cH^1(\Gamma) = d(x,y) \ .
\end{equation}
Clearly, $\# (g^{-1}(t) \cap \Gamma)$ is an upper bound on the number of components of $g^{-1}(t)$ that separate $x$ from $y$ in $\tilde X$. Hence, $\op{lev}_{\Z_2}(g,\{x,y\}) \leq d(x,y)$ and summing over all pairs of $\pi$ we get
\begin{align*}
\op{lev}_{\Z_2}(g,X) & \leq \sum_{\{x,y\} \in \pi} \op{lev}_{\Z_2}(g,\{x,y\}) \leq m(\pi,d) = m(X,d) \\
 & = \F_{\mathbb Z_2}(\curr X) \ .
\end{align*}
This concludes the proof of this part and since maps $f$ and $\rho$ as in (1) exist by Proposition~\ref{mainthmbis}, this also shows that in case $H_1(\tilde X) = 0$ or $H_1^{\Lip}(\tilde X) = 0$ we have
\begin{equation}
\label{filllevequal}
\F_{\mathbb Z_2}(\curr X) = m(X,d) = \Lev_{\mathbb Z_2}(X) \ .
\end{equation}

\textbf{$\mathbf{(3) \Rightarrow (1)}$: }
Let $\varphi : \tilde X \to \R$ be as in (3). By \eqref{filllevequal} just above, we know that $\op{lev}_{\mathbb Z_2}(\varphi,X) = m(X,d)$. As in the proof of \cite[Theorem~1]{Z} consider the set $T = \tilde X /_\sim$, where $x \sim x'$ if $D(x,x') = 0$ with the pseudo distance $D$ on $\tilde X$ given by
\begin{equation}
\label{defD}
D(x,x') \defl \inf \bigl\{\diam(\varphi(C))\ |\ x,x' \in C \text{ and } C \subset \tilde X \text{ is connected} \bigr\} \ .
\end{equation}
Let $f : \tilde X \to T$ be the quotient map and $\rho : T \to \R$ the map for which $\varphi = \rho \circ f$ holds. It is shown in \cite[Lemma~3.1]{Z} that $(T,D)$ is a metric space and both $f$ and $\rho$ are $1$-Lipschitz. Moreover, it follows from \cite[Proposition~3.8]{Z} that $(T,D)$ is a (topological) tree. Let $d_T$ be the intrinsic metric induced by $(T,D)$, i.e.\ $d_T(p,p')$ is the minimal length of curves in $(T,D)$ connecting $p$ with $p'$ in $T$. Because $\tilde X$ is geodesic and $f : (\tilde X,d) \to (T,D)$ is onto and $1$-Lipschitz, we immediately get that $f : (\tilde X,d) \to (T,d_T)$ is also $1$-Lipschitz. By construction, $d_T \geq D$ and hence $\rho : (T,d_T) \to \R$ is also $1$-Lipschitz. Let $\pi$ be a minimal matching of $(X,d)$. From the area formula it follows for a geodesic segment $[x,y]$ connecting $x$ with $y$ in $\tilde X$ as in \eqref{jacobianbound},
\begin{align*}
\op{lev}(\varphi = t,\{x,y\}) & = \int_{\mathbb R} \op{Cut}_{\Z_2}(\varphi = t,\{x,y\}) \,dt \\
 & \leq \int_{\mathbb R} \# \bigl(\varphi^{-1}(t) \cap [x,y]\bigr) \,dt \\
 & \leq \int_{[x,y]} J(\varphi|_{[x,y]})(s) \,d\mathcal H^1(s) \\
 & \leq d(x,y) \ .
\end{align*}
Hence,
\begin{align*}
m(X,d) & = \op{lev}_{\mathbb Z_2}(\varphi,X) \leq \sum_{\{x,y\} \in \pi} \op{lev}(\varphi = t,\{x,y\}) \\
 & \leq m(\pi,d) = m(X,d) \ .
\end{align*}
This shows that $\op{lev}(\varphi = t,\{x,y\}) = d(x,y)$ for all $\{x,y\} \in \pi$ and further there is a $\cH^1$-measurable set $G \subset [x,y]$ with
\begin{equation}
\label{cutequalpreimage}
\left\{
\begin{array}{l}
\cH^1([x,y] \setminus G) = 0 \ , \\
J(\varphi|_{[x,y]})(s) = 1 \text{ for all } s \in G \ , \\
\# (\varphi^{-1}(t) \cap [x,y]) = \op{Cut}_{\Z_2}(\varphi = t,\{x,y\}) < \infty \\
\text{for all } t \in \varphi(G) \ .
\end{array}
\right.
\end{equation}
This means that for $t \in \varphi(G)$ every point $s$ in the finite set $\varphi^{-1}(t) \cap [x,y]$ comes from a $\Z_2$-cut component of $\varphi^{-1}(t)$ and $J(\varphi|_{[x,y]})(s) = 1$. From the construction of $T$ it is clear that every connected component $c$ of $\varphi^{-1}(t)$ satisfies $f(c) = p$ for some $p \in T$. Now assume by contradiction that there are two different points $x < s_1 < s_2 < y$ in $G$ with $\varphi(s_1) = \varphi(s_2) = t$ and $f(s_1) = f(s_2) = p$. By \eqref{cutequalpreimage} there are components $c_1$ and $c_2$ of $\varphi^{-1}(t)$ that disconnect $x$ and $y$ and $c_i \cap [x,y] = s_i$, $i=1,2$. Since $J(\varphi|_{[x,y]})(s_1) = 1$, there is some $s_3 \in G \cap ]s_1,s_2[$ close to $s_1$ with $\varphi(s_3) = t' \neq t$. Let $c_3$ be the corresponding component of $\varphi^{-1}(t')$ with $c_3 \cap [x,y] = s_3$. From the definition of $D$ in \eqref{defD} it follows that
\[
d_T(p,p) \geq D(p,p) = D(s_1,s_2) \geq |t-t'| \ ,
\]
a contradiction. To see the last estimate, let $C$ be a connected set in $\tilde X$ that contains $s_1$ and $s_2$. Since $c_3$ disconnects $x$ and $y$ in $\tilde X$, $C \cap c_3$ is nonempty and hence $\diam(\varphi(C)) \geq |t-t'|$. So the restriction $f|_G$ is injective and satisfies $J(f|_{[x,y]})(s) = 1$ for $\cH^1$-a.e.\ $s \in [x,y]$. The latter is implied by the fact that $f$ and $\rho$ are $1$-Lipschitz and using the chain rule leads to
\begin{equation}
\label{jacobians}
1 = J(\varphi|_{[x,y]})(s) = J(\rho|_{f([x,y])})(f(s)) J(f|_{[x,y]})(s) \ ,
\end{equation}
which holds for a.e.\ $s \in [x,y]$. Therefore $f|_{[x,y]} : [x,y] \to [f(x),f(y)]$ is an isometry and in particular $d_T(f(x),f(y)) = d(x,y)$ for all $\{x,y\} \in \pi$. Hence we obtain $m(\pi,d) = m(\pi,f^*d_T)$ and
\begin{equation*}
%\label{cutestimate}
\op{Cut}_{\mathbb Z_2}(p,\{x,y\}) = 
\left\{
\begin{array}{l}
1 \text{ if } p \in [f(x),f(y)] \cap f(G) \ , \\
0 \text{ if } p \notin [f(x),f(y)] \ .
\end{array}
\right.
\end{equation*}
This implies
\begin{align*}
\int_{A_\pi} \op{Cut}_{\mathbb Z_2}(p,X) \, d\cH^1(p) & \leq \sum_{\{x,y\} \in \pi} \int_{[f(x),f(y)]} \op{Cut}_{\mathbb Z_2}(p,\{x,y\}) \, d\cH^1(p) \\
 & \leq \cH^1(A_\pi) \leq m(X,d) \ .
\end{align*}
Since every component $c$ of $\varphi^{-1}(t)$ maps to some single point in $T$ there holds
\[
\op{Cut}_{\mathbb Z_2}(\varphi = t,X) = \sum_{p \in \rho^{-1}(t)} \op{Cut}_{\mathbb Z_2}(p,X) \ ,
\]
for all $t \in \R$. Since $\rho$ is $1$-Lipschitz it follows from the area formula, \eqref{filllevequal} and the two equations above,
\begin{align*}
m(X,d) & = \int_{\R} \op{Cut}_{\mathbb Z_2}(\varphi = t,X) \, dt \leq \int_{A_\pi} \op{Cut}_{\mathbb Z_2}(p,X) \, d\cH^1(p) \\
 &  \leq \cH^1(A_\pi) \leq m(X,d) \ .
\end{align*}
Hence $\cH^1(A_\pi) = m(X,d)$ and as in Proposition~\ref{mainthmbis}(2), for two different pairs $\{x,y\},\{x',y'\} \in \pi$ the intersection $[f(x),f(y)] \cap [f(x'),f(y')]$ contains at most one point. Assume by contradiction that there is a matching $\pi'$ of $X$ with $m(\pi',f^*d_T) < m(\pi,f^*d_T)$. If we consider $A_{\pi'} = \cup_{\{x,y\} \in \pi'} [f(x),f(y)]$, this assumption implies $\cH^1(A_{\pi'}) < \cH^1(A_{\pi})$. Then the same argument as in the proof of Proposition~\ref{mainthmbis}(3) gives a contradiction and hence $m(X,f^*d_T) = m(X,d)$. From \eqref{jacobians} it follows directly that $\rho$ is an orientation modulo $2$ for $A_\pi$, which equals $A_X$ by Proposition~\ref{mainthmbis}(3).

\textbf{$\mathbf{(2) \Rightarrow (1)}$: }
Let $\varphi : \tilde X \to \R$ be as in (2). As in \cite{WY} consider the pseudo distance $d_\varphi$ on $\tilde X$ defined by
\[
d_\varphi(x,y) \defl \inf\bigl\{\op{length}(\varphi \circ \gamma)\ |\ \gamma \text{ is a Lipschitz curve connecting } x \text{ with } y \bigr\} \ .
\]

Let $T = \tilde X/_\sim$ with $x \sim x'$ if $d_\varphi(x,y) = 0$. It is stated in \cite[Theorem~5]{WY}, respectively in the proof thereof, that $(T,d_\varphi)$ is a metric tree and there are $1$-Lipschitz maps $f : \tilde X \to T$ and $\rho : T \to \R$ with $\varphi = \rho \circ f$.

Let $\pi$ be a minimal matching for $(X,d)$. For any $\{x,y\} \in \pi$ choose a geodesic segment $[x,y]$ in $\tilde X$. Assume by contradiction that $f$ is not injective on $[x,y]$. Then there are points $x \leq v < w \leq y$ on $[x,y]$ with $f(v) = f(w)$. By the definition of $d_\varphi$, there is a sequence of Lipschitz curves $\gamma_n : [0,1] \to \tilde X$ connecting $v$ with $w$ such that
\begin{align*}
0 & = d_\varphi(f(v),f(w)) = d_\varphi(v,w) = \lim_{n \to \infty} \op{length}(\varphi \circ \gamma_n) \\
 & = \lim_{n \to \infty} \int_0^1|(\varphi \circ \gamma_n)'(s)| \, ds \ .
\end{align*}
Replacing the $\gamma_n$ by injective curves if necessary we get
\[
\lim_{n\to\infty}\gamma_{n\#}\curr{0,1}(d\varphi) = \lim_{n \to \infty} \int_0^1|(\varphi \circ \gamma_n)'(s)| \, ds = 0 \ .
\]
If we set $C_n \defl \curr{x,v} + \gamma_{n\#}\curr{0,1}(d\varphi) + \curr{w,y}$, then $\partial C_n = \curr x + \curr y$ and for $n$ large
\begin{align*}
	C_n(d\varphi) & \leq \curr{x,v}(d\varphi) + \gamma_{n\#}\curr{0,1}(d\varphi) + \curr{w,y}(d\varphi) \\
	& \leq d(x,v) + d(w,x) + \gamma_{n\#}\curr{0,1}(d\varphi) < d(x,y) \ .
\end{align*}
This contradicts our assumption on $\varphi$. Namely, from \eqref{massminimizer} it follows that for all $n$, $m(X,d) = \F_{\Z_2}(\curr X) \leq C_n(d\varphi)$.

Therefore, $f$ is injective on $[x,y]$. By the assumption on $\varphi$ there holds $J(\varphi|_{[x,y]})(p) = 1$ for $\cH^1$-a.e.\ $p \in [x,y]$ and since both $f$ and $\rho$ are $1$-Lipschitz, the chain rule \eqref{jacobians} implies that $J(f|_{[x,y]})(p) = 1$ for $\cH^1$-a.e.\ $p \in [x,y]$. Hence the restriction of $f$ to $[x,y]$ is an isometry. This is true for any $\{x,y\} \in \pi$, thus $m(\pi,d) = m(\pi,f^*d_\varphi)$. Now assume that $[f(x),f(y)] \cap [f(x'),f(y')]$ is nonempty for some different $\{x,y\},\{x',y'\} \in \pi$. If this intersection would contain an arc $[a,b]$ for different $a,b \in T$, then there are points $x \leq v < w \leq y$ in $[x,y]$ and points $x' \leq v' < w' \leq y'$ in $[x',y']$ with $f(v) = f(v') = a$ and $f(w) = f(w') = b$. Connecting $x$ with $x'$ via Lipschitz curves from $v$ to $v'$ and $y$ with $y'$ via Lipschitz curves from $w$ with $w'$ as above, we get a contradiction to our starting assumption on $\varphi$.

Combining these observation we get the equalities $\cH^1(A_\pi) = m(\pi,d) = m(\pi,f^*d_\varphi)$ for any minimal matching $\pi$ for $(X,d)$. Assume by contradiction that there is some matching $\pi'$ of $X$ that satisfies $m(\pi',f^*d_\varphi) < m(\pi,f^*d_\varphi)$. Then $\cH^1(A_{\pi'}) < \cH^1(A_\pi)$ and as in the proof of Proposition~\ref{mainthmbis}(3) we get a contradiction. We have already established that $J(f|_{[x,y]})(p) = 1$ for $\cH^1$-a.e.\ $p \in [x,y]$ in case $\{x,y\} \in \pi$. Again with the chain rule \eqref{jacobians} it follows directly that $\rho$ is an orientation modulo $2$ for $A_\pi$, which equals $A_X$ by Proposition~\ref{mainthmbis}(3).
\end{proof}

\section{Two generalizations of matching numbers}

\subsection{Matching number and dimension for metric spaces}
\label{packing}
For a metric space $(X,d)$, an even number $k \in \N$ and $\epsilon > 0$ define the \emph{matching numbers}
\begin{align*}
	m_{k}(X,d) & \defl \sup \bigl\{m(X',d)\ |\ X' \subset X, \# X' = k \bigr\}\ , \\
	m'_{\epsilon}(X,d) & \defl \sup \bigl\{m(X',d)\ |\ X' \subset X \text{ is } \epsilon \text{-separated} \bigr\}\ .
\end{align*}
Here, $X'$ is $\epsilon$-separated if $d(x,x')\geq \epsilon$ for different $x,x' \in X'$. To make the arguments simpler, we allow for members of $X'$ in the definition of $m_{k}(X,d)$ to appear more than once, i.e.\ $X'$ is a multiset. This way we also don't run into the problem of taking a supremum of the empty set. This can happen in the definition of $m'_{\epsilon}(X,d)$ if $\epsilon > \diam(X,d)$. In this case we set $\sup \emptyset \defl 0$. Obviously, $m_{k}(X,d) = m'_{\epsilon}(X,d) = \infty$ if $X$ is not bounded. Here are some easy observations about these numbers, the proofs of which are elementary.

\begin{Lem}
\label{basicproperties}
The following properties for the matching numbers hold,
\begin{enumerate}
	\item If $A \subset X$, then $m_{k}(A,d) \leq m_{k}(X,d)$ and $m'_{\epsilon}(A,d) \leq m'_{\epsilon}(X,d)$.
	\item For even numbers $k \leq k'$ and reals $\epsilon \leq \epsilon'$,
		\begin{align*}
			m_{k}(X,d) & \leq m_{k'}(X,d) \leq \diam(X)\frac{k}{2} \ , \\
			m'_{\epsilon'}(X,d) & \leq m'_{\epsilon}(X,d) \ .
		\end{align*}
	\item For any Lipschitz map $\varphi : (X,d_X) \to (Y,d_Y)$,
		\[
		m_{k}(\varphi(X),d_Y) \leq \Lip(\varphi) m_{k}(X,d_X) \ .
		\]
	\item If $\varphi : (X,d_X) \to (Y,d_Y)$ is bi-Lipschitz,
		\[
		\Lip(\varphi^{-1})^{-1} m'_{\Lip(\varphi^{-1})\epsilon}(X,d_X) \leq m'_{\epsilon}(Y,d_Y) \leq \Lip(\varphi) m'_{\Lip(\varphi)^{-1}\epsilon}(X,d_X)\ .
		\]
	\end{enumerate} 
\end{Lem}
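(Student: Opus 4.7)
The plan is to prove each of the four parts by unwinding definitions and chasing matchings across the maps; none of them requires any machinery beyond the triangle inequality and the Lipschitz hypothesis.

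For (1), I would simply observe that every $k$-multiset (respectively $\epsilon$-separated subset) of $A$ is also a $k$-multiset (respectively $\epsilon$-separated subset) of $X$, so the suprema defining $m_k$ and $m'_\epsilon$ can only grow when we enlarge the ambient space.

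The part of (2) that requires a genuine (if easy) argument is the monotonicity $m_k(X,d)\le m_{k'}(X,d)$ for even $k\le k'$. The key lemma I would prove is that appending a pair $\{p,p\}$ to any multiset $X'$ leaves its matching number unchanged. One direction is obvious: pair $\{p,p\}$ and match the rest optimally. For the reverse, I would argue that in any matching of $X''\defl X'\cup\{p,p\}$ in which the two copies of $p$ are matched to distinct points $x$ and $y$, the triangle inequality
\[
 d(p,x)+d(p,y)\ge d(x,y)=d(x,y)+d(p,p)
\]
shows that re-pairing to $\{p,p\},\{x,y\}$ does not increase the matching cost; hence the minimum is attained by a matching that pairs the two $p$'s, and reducing to $X'$ gives $m(X'',d)\ge m(X',d)$. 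Iterating adds $(k'-k)/2$ trivial pairs to an optimal $k$-multiset, which proves $m_k\le m_{k'}$. The upper bound $m_{k'}(X,d)\le \diam(X)k'/2$ is immediate because every matching has $k'/2$ pairs, each of length at most $\diam(X)$. For $m'_\epsilon$, the inequality is trivial since every $\epsilon'$-separated set is $\epsilon$-separated when $\epsilon\le\epsilon'$.

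For (3), given any $k$-multiset $Y'\subset \varphi(X)$ I would lift it to a $k$-multiset $X'\subset X$ by choosing one preimage for each element. Every matching $\pi$ of $X'$ pushes forward to a matching of $Y'$ of cost
\[
 \sum_{\{x,x'\}\in\pi}d_Y(\varphi(x),\varphi(x'))\le \Lip(\varphi)\sum_{\{x,x'\}\in\pi}d_X(x,x')\ ,
\]
so minimizing over $\pi$ gives $m(Y',d_Y)\le \Lip(\varphi)\,m(X',d_X)\le \Lip(\varphi)\,m_k(X,d_X)$, and taking the supremum over $Y'$ finishes this part.

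Finally, for (4) I would prove the right inequality first: if $Y'\subset Y$ is $\epsilon$-separated, then $X'\defl \varphi^{-1}(Y')$ is $(\Lip(\varphi)^{-1}\epsilon)$-separated because $\epsilon\le d_Y(\varphi(x),\varphi(x'))\le \Lip(\varphi)d_X(x,x')$, and the estimate follows from (3) applied to $\varphi|_{X'}$. The left inequality is then obtained by applying the right one to the bi-Lipschitz map $\varphi^{-1}:Y\to X$ and substituting $\epsilon'\defl \Lip(\varphi^{-1})\epsilon$. The only mild obstacle anywhere in the lemma is the triangle-inequality verification in (2); everything else is direct definition-chasing.
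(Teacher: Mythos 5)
Your proof is correct; the paper labels this lemma as elementary and gives no proof, so you have simply supplied the expected argument. One substantive remark: in part (2) you prove $m_{k'}(X,d)\le\diam(X)\frac{k'}{2}$, whereas the lemma as printed has $\diam(X)\frac{k}{2}$. The printed version is a misprint, as $m_{k'}$ can exceed $\diam(X)\frac{k}{2}$ when $k<k'$: with four points all at mutual distance one and $k=2$, $k'=4$, one gets $m_{k'}(X,d)=2$ but $\diam(X)\frac{k}{2}=1$. Your correction is the right one. Also note that the appending-$\{p,p\}$ trick you use for the monotonicity $m_k\le m_{k'}$ is exactly why the paper adopts the multiset convention in the definition of $m_k$: without it, a small finite $X$ might admit $k$-subsets but no $k'$-subsets, and the monotonicity would fail. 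Finally, the appeal in (4) to ``the estimate follows from (3)'' is slightly loose --- what you are really reusing is the push-forward-of-matchings argument underlying (3) rather than the stated inequality, since (3) compares suprema over multisets while here you need the inequality for a fixed pair $X'$, $Y'$ --- but the reasoning is sound.
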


Depending on some geometric conditions of a metric space we give some bounds to these matching numbers.

\begin{Prop}\label{axiomaticmatching}
Let $(X,d)$ be a compact metric space and $n \geq 1$. Assume that there are constants $0 < c_1 < C_1$ such that for every $0 < \epsilon < \diam(X)$,
\begin{equation}\label{firstinequality}
c_1 \epsilon^{-n} < \sup\{\# X'\ |\ X' \subset X \text{ is } \epsilon\text{-separated, } \# X' \text{ even}\} \leq C_1 \epsilon^{-n} \ .
\end{equation}
Then, there is a constant $c > 0$ such that for all $0 < \epsilon < \diam(X)$ and all even numbers $k$,
\begin{equation}
\label{lowerbound}
m_k(X,d) \geq c k^\frac{n-1}{n} \ , \quad \text{ and } \quad m'_{\epsilon}(X,d) \geq\frac{c_1}{2} \epsilon^{1-n} \ .
\end{equation}
Let $Y \subset X$. Assume that $\cH^n(X) < \infty$ and that there are constants $C_2 > 0$ and $0 < \lambda_2 < \frac{1}{2}$ such that for all points $x,x' \in Y$ and all open sets $U \subset X$ with $\B(x,\lambda_2 r) \subset U$ and $\B(x,\lambda_2r) \subset X \setminus \bar U$ there holds
\[
\cH^{n-1}(\partial U) \geq C_2 r^{n-1} \ .
\]
Then, there is a constant $C > 0$ such that for all $0 < \epsilon < \diam(X)$ and all even numbers $k$,
\begin{equation}
\label{upperbound}
m_k(Y,d) \leq C \cH^n(X)^\frac{1}{n} k^\frac{n-1}{n} \ , \quad \text{ and } \quad m'_{\epsilon}(Y,d) \leq C \cH^n(X) \epsilon^{1-n} \ .
\end{equation}
\end{Prop}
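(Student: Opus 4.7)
The proof splits into lower bounds, which follow directly from the packing hypothesis \eqref{firstinequality}, and upper bounds, which require first extracting a volume lower bound from the isoperimetric hypothesis and then running a hierarchical matching construction.

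For the lower bounds, \eqref{firstinequality} furnishes, for every $0 < \epsilon < \diam(X)$, an $\epsilon$-separated $X' \subset X$ of even cardinality with $\#X' > c_1\epsilon^{-n}$. Every matching of $X'$ consists of $\#X'/2$ pairs each at distance at least $\epsilon$, whence $m'_\epsilon(X,d) \geq m(X',d) \geq (c_1/2)\epsilon^{1-n}$. For the $m_k$-estimate I choose $\epsilon \defl (c_1/k)^{1/n}$ (valid once $\epsilon < \diam(X)$, which holds for $k$ past a threshold; small $k$ is absorbed into the final constant). The same step produces an $\epsilon$-separated set of size $>k$; any $k$-element subset remains $\epsilon$-separated, so $m_k(X,d) \geq k\epsilon/2 = (c_1^{1/n}/2)\, k^{(n-1)/n}$.

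The core preliminary for the upper bounds is an Ahlfors volume lower bound $\cH^n(\B(x,R)) \geq cR^n$ for $x \in Y$ and $R$ up to a constant times $\diam(Y)$. Given $x$ and such $R$, the triangle inequality applied to a near-diameter pair yields $x' \in Y$ with $d(x,x') \geq \diam(Y)/2 \geq R$. Setting $r \defl \lambda_2 R$, every $s \in (\lambda_2 r,\, d(x,x') - \lambda_2 r)$ makes $U \defl \B(x,s)$ satisfy $\B(x,\lambda_2 r) \subset U$ and $\B(x',\lambda_2 r) \subset X \setminus \bar U$, so the isoperimetric hypothesis gives $\cH^{n-1}(\partial U) \geq C_2 r^{n-1}$. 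Integrating against $ds$ on an interval of length $\simeq R$ and invoking the metric coarea inequality for $z \mapsto d(z,x)$ yields $\cH^n(\B(x,R)) \gtrsim r^{n-1} \cdot R \simeq R^n$, with constant depending only on $C_2$ and $\lambda_2$. A standard disjoint-ball packing argument then produces a cover of $Y$ by at most $N_r \leq C\cH^n(X)\, r^{-n}$ balls of radius $r$ with centers in $Y$.

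Given an even $k$-subset $X' \subset Y$, the upper bound $m_k(Y,d) \leq C\cH^n(X)^{1/n} k^{(n-1)/n}$ now follows from a dyadic construction. Set $r_0 \defl (\cH^n(X)/k)^{1/n}$ and $r_j \defl 2^j r_0$. At each level $j \geq 0$, cover $Y$ by $N_j \leq Ck\, 2^{-jn}$ balls of radius $r_j$, distribute the points of $X'$ (or the residues surviving from level $j-1$) one ball per point, and greedily pair points sharing a ball, leaving at most one residue per ball. The pairs produced at level $j$ have length $\leq 2r_j$ and number $\leq \min(k, N_{j-1})/2$, contributing at most a constant multiple of $r_0 k \cdot 2^{j(1-n)}$. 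For $n \geq 2$ the geometric series sums to $O(r_0 k) = O(\cH^n(X)^{1/n} k^{(n-1)/n})$; the case $n = 1$ reduces to the trivial bound $m_k \leq \diam(X)$. The corresponding $m'_\epsilon$-estimate follows by combining the $m_k$-bound with the volume-derived cardinality bound $\#X' \leq C\cH^n(X)\epsilon^{-n}$ for $\epsilon$-separated subsets of $Y$. The principal obstacle is the volume lower bound: it depends on having a metric-space coarea inequality for distance functions available, and on balancing the scales $R$ and $\lambda_2 R$ so that the antipodal-point requirement $d(x,x') \geq R$ is genuinely compatible with the constraint $\lambda_2 < 1/2$.
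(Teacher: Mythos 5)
Your lower\nobreakdash-bound argument is correct and essentially matches the paper's: you set the scale via $\epsilon\sim(c_1/k)^{1/n}$ where the paper uses $(C_1/k)^{1/n}$, a harmless variant of the same packing argument.

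For the upper bound you take a genuinely different route, and this is where a real gap sits. The paper's proof is short and leans directly on the duality machinery: it takes the $1$-Lipschitz map $f\colon X\to T$ into a minimal tree from Proposition~\ref{mainthmbis}, with $\cH^1(T)=m(X',d)$, applies Federer's coarea inequality to $f$, and for $q$ in the middle $(1-2\lambda_2)$-portion of each matched arc $[f(x),f(y)]$ observes that $f^{-1}(q)$ separates $\B(x,\lambda_2 d(x,y))$ from $\B(y,\lambda_2 d(x,y))$, so the isoperimetric hypothesis applies with $r=d(x,y)$; integrating over $T$ gives $\sum_{\{x,y\}\in\pi}d(x,y)^n\le C'\cH^n(X)$, and the power\nobreakdash-mean inequality finishes. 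Crucially, the two $Y$-points the hypothesis needs are precisely the endpoints of a matched pair, so they are available at exactly the right scale, and the argument requires no volume bound, no covering count, and no hierarchy.

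Your replacement (volume lower bound, then covering bound, then dyadic greedy matching) cannot be run on the stated hypotheses. The isoperimetric inequality is tied to a pair $x,x'\in Y$ with $r=d(x,x')$; it constrains only sets $U$ separating $\B(x,\lambda_2 r)$ from $\B(x',\lambda_2 r)$, hence only the spheres $\partial\B(x,s)$ with $s$ in the window $(\lambda_2 d(x,x'),(1-\lambda_2)d(x,x'))$. If $Y$ contains no point at distance comparable to $R$ from $x$ (take $Y$ to be two antipodal points of $X=\B^n(0,1)$ and $R$ small), that window misses $(0,R]$ entirely and the hypothesis is silent about $\cH^n(\B(x,R))$. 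Your ``$r\defl\lambda_2 R$'' step treats $r$ as a free parameter of the hypothesis, which the statement does not license. You flag this as the principal obstacle, and it is indeed fatal: without an extra density or doubling assumption on $Y$, which the proposition does not impose, the covering bound $N_r\le C\cH^n(X)r^{-n}$ does not follow, and the dyadic construction sitting on top of it has nothing to stand on. A smaller issue: for $n=1$ you fall back to $m_k\le\diam(X)$, but the claimed bound is $C\cH^1(X)$ and the two are not comparable; a metric star with $k$ arms of length $d$ and $Y$ its endpoints has $\diam(X)=2d$ but $m_k(Y,d)=kd=\cH^1(X)$, whereas the paper's coarea argument handles $n=1$ uniformly.
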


\begin{proof}
If $X'_\epsilon \subset X$ is some $\epsilon$-separated subset of even cardinality realizing the inequality \eqref{firstinequality}, then obviously
\[
\frac{c_1}{2} \epsilon^{1-n} \leq \epsilon \frac{\# X'_\epsilon}{2} \leq m'_{\epsilon}(X'_\epsilon,d) \leq m'_\epsilon(X,d) \ .
\]
Assume the even number $k$ is big enough such that $\epsilon_k \defl C_1^{\frac{1}{n}}k^{-\frac{1}{n}} < \diam(X)$. If $X'_{\epsilon_k}$ is an $\epsilon_k$-separated set of even cardinality, then we have $\# X'_{\epsilon_k} \leq C_1 \epsilon_k^{-n} = k$ and hence
\[
\frac{c_1}{2} C_1^\frac{1-n}{n}k^\frac{n-1}{n} = \frac{c_1}{2} \epsilon_k^{1-n} \leq \epsilon_{k} \frac{\# X'_{\epsilon_k}}{2} \leq m(X'_{\epsilon_k},d) \leq m_k(X,d) \ .
\]
This holds for all but finitely many $k$, and \eqref{lowerbound} follows.

To prove the second statement let $X' \subset Y$ be a set of even cardinality. Let $f : X \to T$ be a $1$-Lipschitz map into a minimal metric tree $(T,d_T)$ as in Proposition~\ref{mainthmbis}. In particular for $\pi \in \cM(X',d)$,
\begin{equation}
\label{matchingidentities}
\cH^1(T) = m(X',d) = m(\pi,d) = m(\pi,f^*d_T) = m(X',f^*d_T) \ .
\end{equation}
By the coarea inequality, see e.g.\ \cite[Theorem~2.10.25]{F}, we then get
\begin{equation}
	\label{coareainequality}
	\frac{\alpha_{n-1}\alpha_1}{\alpha_n}\cH^n(X) \geq \int_T^* \cH^{n-1}(f^{-1}(q)) \, d\cH^{1}(q) \ .
\end{equation}
Because $X$ is compact, the map $q \mapsto \cH^{n-1}(f^{-1}(q))$ is measurable by the statement in \cite[Subsection~2.10.26]{F}. Hence the upper integral on the right-hand side above can be replaced by the usual Lebesgue integral. By Proposition~\ref{mainthmbis}, $T$ can be expressed as $\cup_{\{x,y\} \in \pi} [f(x),f(y)]$ and the pairwise overlaps of these intervals have $\cH^1$-measure zero. For $\{x,y\} \in \pi$ we define the set
\[
G(\{x,y\}) \defl \{q \in [f(x),f(y)]\ |\ d_T(f(x),q),d_T(f(y),q) \geq \lambda_2 d_T(f(x),f(y)) \} \ .
\]
For any $q \in G(\{x,y\})$ the set $f^{-1}(q)$ separates $x$ and $y$ in $X$ and both $d(x,f^{-1}(q))$ and $d(y,f^{-1}(q))$ are bounded from below by $\lambda_2 d(x,y)$ since $f$ is $1$-Lipschitz and $d_T(f(x),f(y)) = d(x,y)$ by \eqref{matchingidentities}. Hence by our assumptions on $X$ and \eqref{coareainequality},
\begin{align*}
\frac{\alpha_{n-1}\alpha_1}{\alpha_n}\cH^n(X) & \geq \sum_{\{x,y\} \in \pi} \int_{[f(x),f(y)]} \cH^{n-1}(f^{-1}(q)) \, d\cH^{1}(q) \\
	& \geq \sum_{\{x,y\} \in \pi} \int_{G(\{x,y\})} C_2 d(x,y)^{n-1} \, d\cH^{1}(q) \\
	& \geq \sum_{\{x,y\} \in \pi} (1-2\lambda_2) d(x,y) C_2 d(x,y)^{n-1} \\
	& = (1-2\lambda_2)C_2 \sum_{\{x,y\} \in \pi} d(x,y)^{n} \ .
\end{align*}
Therefore, $\sum_{\{x,y\} \in \pi} d(x,y)^{n} \leq C'\cH^n(X)$ for some constant $C'$ independent of $\pi$. If $\# X' = k$, then by the power mean inequality
\[
\sum_{\{x,y\} \in \pi} d(x,y)^{n} \geq (2^{-1}k)^{1-n}\Bigg( \sum_{\{x,y\} \in \pi} d(x,y) \Bigg)^n = (2^{-1}k)^{1-n}m(X',d)^n \ ,
\]
and hence $m(X',d) \leq 2C' \cH^n(X)^\frac{1}{n}k^\frac{n-1}{n}$. If $X$ is $\epsilon$-separated, then
\[
\epsilon^{n-1}m(X',d) \leq \sum_{\{x,y\} \in \pi} d(x,y)^{n} \leq C'\cH^n(X) \ .
\]
By taking the supremum over all such $X'$, the upper bound on $m_{k}(Y,d)$ and $m'_{\epsilon}(Y,d)$ follows.
\end{proof}

This can be applied to balls in an Ahlfors regular space that supports a Poincar\'e inequality. A \emph{metric measure space} $(X,d,\mu)$ is a metric space $(X,d)$ equipped with a Borel measure $\mu$. This space is \emph{Ahlfors regular of dimension $n$} with constants $0 < c_A \leq C_A$ if for all $x \in X$ and $r > 0$,
\begin{equation}\label{ahlfors}
c_A r^n \leq \mu(\B(x,r)) \leq C_A r^n \ .
\end{equation}
$(X,d,\mu)$ supports a \emph{weak Poincar\'e inequality} if there are constants $\lambda_P \geq 1$, $C_P > 0$ such that for all continuous functions $u : X \to \R$, their upper gradients $g$ and all balls $B = \B(x,r)$,
\begin{equation}\label{poincare}
\dashint_{B} |u - u_B| \, d\mu \leq C_P r \dashint_{\lambda_P B} g \, d\mu \ .
\end{equation}
Here, $\dashint_{B} = \frac{1}{\mu(B)}\int_{B}$ and $u_B = \dashint_{B}u$.

\begin{Cor}\label{maxmatch}
Let $(X,d,\mu)$ be a complete metric measure space that is Ahlfors regular of dimension $n > 1$ and supports a weak Poincar\'e inequality. Then there are constants $0 < c \leq C$, such that for all $x \in X$, $r > 0$, $k \in 2\N$ and $\epsilon < \diam(\B(x,r))$,
\begin{align*}
	c r k^\frac{n-1}{n} & \leq m_k(\B(x,r),d) \leq C r k^\frac{n-1}{n} \ , \\
	c r^n \epsilon^{1-n} & \leq m'_\epsilon(\B(x,r),d) \leq C r^n \epsilon^{1-n} \ .
\end{align*}
\end{Cor}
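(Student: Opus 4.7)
The plan is to apply Proposition~\ref{axiomaticmatching} with $X = Y = \B(x,r)$ equipped with the restricted metric, and to verify its two structural hypotheses with constants that scale correctly in $r$ and are otherwise independent of $x$ and $r$.

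For the packing hypothesis \eqref{firstinequality}, a standard argument from Ahlfors $n$-regularity \eqref{ahlfors} shows that for $0 < \epsilon < \diam(\B(x,r))$ the maximum cardinality of an $\epsilon$-separated subset of $\B(x,r)$ is comparable to $(r/\epsilon)^n$: the upper bound comes from disjointness of the half-radius balls $\B(y,\epsilon/2)$ inside $\B(x,r+\epsilon/2)$, and the lower bound from covering $\B(x,r)$ by balls of radius $\epsilon$ centered on a maximal separated set. Discarding at most one point to enforce even cardinality affects these bounds only by a constant factor, so \eqref{firstinequality} holds with $c_1 \asymp r^n \asymp C_1$, with comparability constants depending only on $c_A, C_A$.

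For the isoperimetric-type hypothesis, invoke the relative isoperimetric inequality that follows from the combination of Ahlfors regularity with the weak Poincar\'e inequality (a classical consequence through the Heinonen--Koskela theory and the Ambrosio--Miranda framework of BV functions on metric measure spaces): there exist constants $\Lambda \geq 1$ and $C_0 > 0$ such that for every ball $B = \B(z,R)$ and every Borel set $E \subset X$,
\[
\min\bigl(\mu(E \cap B),\, \mu(B \setminus E)\bigr)^{(n-1)/n} \leq C_0\,\cH^{n-1}(\partial E \cap \Lambda B)\ .
\]
Given $y,y' \in \B(x,r)$ and open $U$ with $\B(y, \lambda_2 s) \subset U$ and $\B(y', \lambda_2 s) \subset \B(x,r)\setminus \bar U$, one has $d(y,y') \geq 2\lambda_2 s$; applying the above to $E = U$ on the ball $B = \B(y, d(y,y') + \lambda_2 s)$, Ahlfors regularity bounds both $\mu(U\cap B)$ and $\mu(B\setminus U)$ from below by $c_A(\lambda_2 s)^n$, producing $\cH^{n-1}(\partial U) \geq C_2 s^{n-1}$ with $C_2$ independent of $x$, $r$ and $s$, provided $\lambda_2$ is chosen small enough (and $\Lambda$ absorbed into the definition of the admissible range).

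Having verified both hypotheses, Proposition~\ref{axiomaticmatching} yields $m_k(\B(x,r),d) \geq (c_1/2)\,C_1^{(1-n)/n}\, k^{(n-1)/n}$; since $c_1/C_1^{(n-1)/n} \asymp r^n / r^{n-1} = r$, this gives the desired lower bound $c\,r\,k^{(n-1)/n}$. The lower bound $m'_\epsilon \geq (c_1/2)\epsilon^{1-n}$ scales as $r^n \epsilon^{1-n}$. The upper bounds follow from \eqref{upperbound} together with $\cH^n(\B(x,r))^{1/n} \leq C_A^{1/n} r$. Small values of $k$ outside the range in which the argument of Proposition~\ref{axiomaticmatching} applies verbatim are absorbed into constants using monotonicity of $k \mapsto m_k$ together with the lower bound $\diam(\B(x,r)) \geq c_0 r$, which itself follows from Ahlfors regularity with $n > 1$. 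The main technical obstacle is the second step: one needs the relative isoperimetric inequality in the form with $\cH^{n-1}(\partial U)$ (rather than the BV perimeter measure), but for metric measure spaces satisfying our hypotheses this passage is standard.
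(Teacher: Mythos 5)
Your proof is correct and follows essentially the same route as the paper: verify the packing hypothesis of Proposition~\ref{axiomaticmatching} from Ahlfors regularity, verify the isoperimetric hypothesis by combining the Poincar\'e inequality with Ahlfors regularity to obtain a relative isoperimetric inequality with $\cH^{n-1}$-boundary, then feed the constants back through Proposition~\ref{axiomaticmatching}. The one place where you wave your hands --- the passage from a BV-perimeter isoperimetric inequality to one stated with $\cH^{n-1}(\partial U)$ --- is exactly where the paper is careful, first upgrading the weak $(1,1)$-Poincar\'e to a $(\tfrac{n}{n-1},1)$-Poincar\'e inequality via \cite[Theorems 5.1 and 10.3]{HK} and then invoking \cite[Theorem~1.1]{KL}, which gives the isoperimetric estimate directly in the $\cH^{n-1}$ form needed.
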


\begin{proof}
Fix some $x \in X$ and $r > 0$. The Ahlfors regularity implies that $\mu$ is doubling and comparable to the $n$-dimensional Hausdorff measure. Moreover, there is some constant $0 < c' \leq 2$ such that $\diam(\B(x,r)) \geq c' r$. It is rather direct to check that this implies the first assumption \eqref{firstinequality} of Proposition~\ref{axiomaticmatching}. Indeed, let $0 < r'\leq r$ and consider a maximal $r'$-separated set $X'$ in $\B(x,r)$. Then the balls $\oB(x',\frac{r'}{2})$ are pairwise disjoint subsets of $\B(x,2r)$ and hence
\[
(\# X') c\Bigl(\frac{r'}{2}\Bigr)^n \leq \mu\Bigl(\B\Bigl(X',\frac{r'}{2}\Bigr)\Bigr) \leq \mu(\B(x,2r)) \leq C(2r)^n \ .
\]
Moreover, because $X'$ is maximal, the set $\B(X',r')$ covers $\B(x,r)$ and hence
\[
(\# X')C(r')^n \geq \mu(\B(X',r')) \geq \mu(\B(x,r)) \geq c r^n \ .
\]
This shows that up to some constants independent of $x$ and $r$, $\# X'$ is comparable to $(\frac{r}{r'})^n$ which proves \eqref{firstinequality}. Moreover, since $X$ is complete and by the consideration above, balls in $X$ are totally bounded and hence compact, verifying the other condition required of the metric spaces in Proposition~\ref{axiomaticmatching}.

Because $X$ supports a weak Poincar\'e inequality, it follows from Theorem~5.1 and Theorem~10.3 in \cite{HK} that for all balls $B = \B(x,r) \subset X$, continuous functions $u$ and their upper gradients $g$,
\[
\left(\dashint_{B} |u - u_B|^\frac{n}{n-1} \, d\mu\right)^\frac{n-1}{n} \leq C_P' r \dashint_{\lambda_P' B} g \, d\mu \ .
\]
By \cite[Theorem~1.1]{KL}, this weak $(\frac{n}{n-1},1)$-Poincar\'e inequality implies that there exist constants $C_S > 0$ and $\lambda_S \geq 1$ such that for all balls $B$ and all Borel measurable $E \subset B$,
\begin{equation}\label{borelsetbound}
\left(\frac{\min\{\cH^n(B \cap E), \cH^n(B \setminus E)\}}{\cH^n(B)}\right)^\frac{n-1}{n} \leq C_S r \frac{\cH^{n-1}(\lambda_S B \cap \partial E)}{\cH^{n}(\lambda_S B)} \ .
\end{equation}
In order to apply the second part of Proposition~\ref{axiomaticmatching} fix some ball $B = \B(x,r)$ and let $x_1,x_2 \in B$. For $s < \frac{1}{2}d(x_1,x_2)$, the balls $\B(x_1,s)$ and $\B(x_2,s)$ are disjoint and contained in $2B$. If $U$ is some open set in $X$ with $\B(x_1,s) \subset \bar U$ and $\B(x_2,s) \subset X \setminus U$, then by \eqref{ahlfors} and \eqref{borelsetbound}, for some constant $c' > 0$,
\begin{align*}
c'\frac{s^{n-1}}{r^{n-1}} & \leq \left(\frac{\min\{\B(x_1,s), \B(x_2,s)\}}{\cH^n(2B)}\right)^\frac{n-1}{n} \\
 & \leq \left(\frac{\min\{\cH^n(2B \cap \bar U), \cH^n(2B \setminus U)\}}{\cH^n(2B)}\right)^\frac{n-1}{n} \\
 & \leq C_S 2r \frac{\cH^{n-1}(\lambda_S 2 B \cap \partial U)}{\cH^{n}(\lambda_S 2 B)} \ .
\end{align*}
If we set $s = \frac{1}{3}d(x_1,x_2)$, then $C' d(x_1,x_2)^{n-1} \leq \cH^{n-1}(\lambda_S 2 B \cap \partial U)$ for some constant $C' > 0$ independent of $x$ and $r$. Since $\cH^n(\lambda_S 2 B)$ is bounded by a fixed multiple of $r^n$, we get by Proposition~\ref{axiomaticmatching}, $m_k(B,d) \leq C r k^\frac{n-1}{n}$ and $m'_{\epsilon}(B,d) \leq C r^n \epsilon^{1-n}$.
\end{proof}

The assumptions of this Corollary are satisfied for example by Carnot groups equipped with the Carnot-Carath\'eodory metric with homogeneous dimension $n$, see e.g.\ \cite[Proposition~11.17]{HK} and the references there. Or more simply, they are satisfied for normed vector spaces of dimension $n$, and in this case there are also more elementary proofs of the second assumption in Proposition~\ref{axiomaticmatching} not relying on the Poincar\'e inequality.

From the statement of Proposition~\ref{axiomaticmatching} and its application to Corollary~\ref{maxmatch} we see that up to some multiplicative constant, the matching number $m_k$ for balls or cubes in $\R^n$ are realized by distributing the points as equally as possible and behaves like $k^\frac{n-1}{n}$. This motivates the definition of the \emph{matching dimension} of a bounded metric space $X$ as the number
\[
\dim_{\text{m}}(X) \defl \inf\Bigl\{n \in [1,\infty]\ |\ \exists \, C \geq 0 \text{ s.t.\ } \forall k \in 2\N, \, m_k(X) \leq C k^\frac{n-1}{n}\Bigr\} \ .
\]
To see some interesting behavior of this notion of dimension we consider examples of compact metric trees. As in Proposition~\ref{mainthmbis} it is rather direct to check that $m_k(T) \leq \cH^1(T)$ for all $k$ and any compact tree $T$. Hence if $\cH^1(T) < \infty$, then $\dim_{\text{m}}(T) = 1$. On the other side, for any decreasing sequence $\epsilon_1 \geq \epsilon_2 \geq \dots > 0$ with $\lim_{m \to \infty 0} \epsilon_m = 0$ we can construct the compact metric tree $T$ obtained by gluing the countable collection of disjoint copies of closed intervals $[0,\epsilon_m]$ by identifying the point $0$ in all the copies. By taking the $2k$ points corresponding to the value $\epsilon_m$ in the interval $[0,\epsilon_m]$ for $m = 1,\dots,2k$, we see that $m_{2k}(T) \geq \sum_{m=1}^{2k} \epsilon_m$. Since the maximum of $\cH^1(T')$ taken over all subtrees $T' \subset T$ spanned by $2k$ points in $T$ is also equal to this number we get that indeed,
\[
m_{2k}(T) = \sum_{m=1}^{2k} \epsilon_m \ .
\]
Hence, if the sequence $(\epsilon_m)$ can be chosen in such a way that for all $k$,
\begin{equation}
\label{sequenceassumption}
\sum_{m=1}^{2k} \epsilon_m = k^\frac{n-1}{n} \ ,
\end{equation}
then we obtain $\dim_{\text{m}}(T)= n$. But \eqref{sequenceassumption} is easy to achieve since the successive differences of the right-hand side satisfy,
\[
1 \geq k^\frac{n-1}{n} - (k-1)^\frac{n-1}{n} > (k+1)^\frac{n-1}{n} - k^\frac{n-1}{n} \xrightarrow{k \to \infty} 0 \ .
\]
There are also such trees with $\dim_{\text{m}}(T) = \infty$. Note that for this class of examples we have $\dim_{\text{H}}(T) = 1$ for the Hausdorff dimension and $\dim_{\text{A}}(T) = \infty$ for the Assouad dimension. This shows that ranging over all compact metric trees $T$ with $\dim_{\text{H}}(T) = 1$, the matching dimension $\dim_{\text{m}}(T)$ can realize any real number in $[1,\infty]$.

\subsection{Infinite matchings}

We now consider the case where $X$ could be infinite. The main difference with the finite case is that in this setting it is not true in general that a minimum matching exists, as shown by Example~\ref{nominmatch} below. Such pathological examples exist, even though there are less competitors for the minimization, already for the oriented case, i.e.\ for the optimal transportation problem for infinite sets of points, as explained in Remark~\ref{nominmatchz}. We fix now the most general notion of minimization for matchings for infinite $X$, which in the case of locally finite $X\subset \tilde X=\mathbb R$ with a special kind of distance was studied in \cite{dssobolevski}, \cite{mccann}:

\begin{Def}[matching, locally minimal matching, finite matching]\label{locminmatch}
Let $(X,d)$ be a possibly infinite pseudometric space, and consider a partition $\pi$ of $X$ into cardinality-$2$ sets. We say that $\pi$ is a matching for $X$ if for finite subsets of couples $A \subset \pi$ the sum of $d(x,y)$ for $\{x,y\} \in A$ is always finite. We further say that $\pi$ is a locally minimal matching for $(X,d)$ if for any other matching $\pi'$ of $X$ such that the symmetric difference $\pi\Delta\pi'$ is finite there holds
\[
 \sum_{\{x,y\}\in\pi\setminus \pi'} d(x,y) \le \sum_{\{x',y'\}\in\pi'\setminus \pi} d(x',y') \ .
\]
We say that a partition $\pi$ of $X$ is a finite matching in case $\sum\{d(x,y)\ |\ \{x,y\}\in\pi\} < \infty$. As in the finite case we denote this number by $m(\pi,d)$.
\end{Def}
 
In particular, if a finite matching exists then $X$ is countable. We then have the following result.

\begin{Prop}[duality for infinite matchings]\label{thmexistsinfinite}
Let $(X,d)$ be a countable metric space for which the completion $\bar X$ is compact and for which there exists a finite, locally minimal matching $\pi$. Then there exists a compact metric tree $T$ and a $1$-Lipschitz function $f : X\to T$ such that $m(\pi,f^*d_T) = m(\pi,d)$ and $\pi$ is locally minimal for $(X,f^*d_T)$ too.
\end{Prop}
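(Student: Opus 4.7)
The plan is to exhaust $X$ by finite $\pi$-saturated subsets $X_n$, apply Theorem~\ref{mainthm} on each, and extract a limiting tree-like pseudometric via a diagonal argument. Enumerate $X = \{x_1, x_2, \dots\}$ and define $X_n$ as the smallest $\pi$-saturated finite set containing $x_1, \dots, x_n$ (inductively adjoin $\pi$-partners); then $X_n \sub X_{n+1}$, each $\#X_n$ is even, and $\bigcup_n X_n = X$. A first key observation is that by the local minimality of $\pi$ in $(X,d)$, the restriction $\pi_n \defl \pi \cap (X_n \times X_n)$ is a minimum matching of $(X_n, d|_{X_n})$: any matching $\sigma$ of $X_n$ extends to a matching $(\pi\setminus\pi_n) \cup \sigma$ of $X$ differing from $\pi$ only on $X_n \times X_n$, so local minimality forces $m(\pi_n, d) \le m(\sigma, d)$. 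Applying Theorem~\ref{mainthm} to each $(X_n, d|_{X_n})$ yields a tree-like pseudometric $D_n \le d|_{X_n}$ with $m(X_n, D_n) = m(X_n, d)$, and Proposition~\ref{mainthmbis}(1) gives the crucial pointwise equality $D_n(x,y) = d(x,y)$ for every $\{x,y\} \in \pi_n$.

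Since $0 \le D_n(x,y) \le d(x,y) \le \diam(\bar X) < \infty$, a diagonal subsequence extraction produces a limit $D(x,y) \defl \lim_n D_n(x,y)$ for all $x,y \in X$. Triangle inequality, symmetry, non-negativity and the four-point condition all pass to pointwise limits, so $D$ is a tree-like pseudometric on $X$ with $D \le d$ and $D = d$ on every pair of $\pi$; in particular $m(\pi, D) = m(\pi, d)$. For local minimality of $\pi$ in $(X, D)$, fix $N$ and any matching $\sigma$ of $X_N$. For $n \ge N$, the set $\tilde\sigma \defl \sigma \cup (\pi_n \setminus \pi_N)$ is a matching of $X_n$ by saturation, so
\[
m(\sigma, D_n) + m(\pi_n \setminus \pi_N, D_n) = m(\tilde\sigma, D_n) \ge m(X_n, D_n) = m(X_n, d)\ .
\]
Since $D_n = d$ on $\pi_n$, this simplifies to $m(\sigma, D_n) \ge m(X_N, d)$; letting $n \to \infty$ gives $m(\sigma, D) \ge m(X_N, d) = m(\pi_N, D)$, so $\pi_N$ is minimal in $(X_N, D)$. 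Any $\pi'$ with $\pi \Delta \pi'$ finite is supported in some $X_N$ (which, enlarging $N$ if necessary, is also $\pi'$-saturated, since outside $\bigcup(\pi\Delta\pi')$ the two matchings coincide), and comparing $\pi_N$ with $\pi'|_{X_N}$ yields the required $\sum_{\pi \sm \pi'} D \le \sum_{\pi' \sm \pi} D$.

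It remains to realize $(X, D)$ inside a compact metric tree. The bound $|D(x,y) - D(x',y')| \le d(x,x') + d(y,y')$ shows that $D$ is $d$-continuous and hence extends continuously to the compact completion $\bar X$; the four-point condition survives this extension. Applying the Dress-type embedding theorem cited in the introduction to $(\bar X/{\sim}, D)$ (with $x \sim y$ when $D(x,y)=0$) produces an isometric embedding into a metric tree; let $T$ be the sub-tree spanned by the image of $\bar X$. In an $\R$-tree this span is realized as the image of the compact set $\bar X \times \bar X \times [0,1]$ under the (jointly continuous) geodesic parametrization $(x,y,t) \mapsto \gamma_{f(x),f(y)}(t \cdot d_T(f(x),f(y)))$, so $T$ is compact. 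Restricting the embedding to $X$ gives a $1$-Lipschitz map $f : X \to T$ with $f^* d_T = D$, yielding both $m(\pi, f^* d_T) = m(\pi, d)$ and the local minimality of $\pi$ in $(X, f^* d_T)$.

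The main obstacle is the limiting local-minimality argument, since pointwise convergence of pseudometrics does not in general preserve minimum matchings. The resolution above relies crucially on the identity $D_n = d$ on $\pi_n$ from Proposition~\ref{mainthmbis}(1), which freezes the $\pi_n$-part of each comparison and reduces the global minimality statement to the already-established minimality of $\pi_N$ in the finite restriction $(X_N, D)$.
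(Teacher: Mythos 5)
Your proof is correct and reaches the same conclusion by a genuinely different route than the paper's. Both start from the same finite exhaustion $X_n$ with $\pi_n$ as minimum matching of $(X_n,d)$ and both exploit the freezing identity $D_n=d$ on $\pi_n$ from Proposition~\ref{mainthmbis}(1); the divergence is in how the limiting tree is produced. The paper first uses injectivity of complete metric trees to extend each finite-stage map to $f_k : \bar X \to T_k$, then shows uniform boundedness and total boundedness of the trees $(T_k)$ from the $\cH^1$ bound, invokes Gromov's compactness theorem to embed a subsequence into a common compact $Z \subset \ell^\infty(\N)$, and finishes with Arzel\`a--Ascoli on the maps $\iota_k \circ f_k$. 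You instead converge only the distance matrix: a diagonal argument on the pseudometrics $D_n$ (which are uniformly bounded by $\diam(\bar X)$) produces a pointwise limit $D$ on $X$, and you observe that the pseudometric, triangle, and four-point inequalities all survive pointwise limits, so $D$ is tree-like. Your local-minimality verification is also slightly sharper than the paper's: you reduce it cleanly to minimality of $\pi_N$ in $(X_N,D)$ via the auxiliary matching $\tilde\sigma = \sigma \cup (\pi_n \setminus \pi_N)$, while the paper argues by a somewhat compressed contradiction. You only touch trees at the very end, extending $D$ to $\bar X$ by uniform continuity (using $D \le d$) and applying the Dress embedding, with the compactness of the spanning subtree supplied by the continuity of geodesics in a complete $\R$-tree (a CAT(0) fact). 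The trade is clear: your approach avoids Gromov--Hausdorff compactness and Arzel\`a--Ascoli entirely at the cost of a small additional argument that the spanned subtree is compact; the paper's approach builds the tree and the map simultaneously, so that compactness of $T$ and the Lipschitz property of $f$ come out of the box, but pays for this with heavier machinery.
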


\begin{Expl}[an $X$ with no minimal matching]\label{nominmatch}
Consider $X \defl \{0\}\cup\{2^{-i}\ |\ i\in\mathbb N\}\subset\mathbb R$. This set obviously has some finite matching and in any such matching $\pi$ the limit point $0$ has to be matched with some point $x > 0$. The interval $[0,x]$ then contains another point $x'$ that is paired with some $x'' > x'$. But replacing the matches $\{0,x\},\{x',x''\}$ in $\pi$ with $\{0,x'\},\{x,x''\}$ gives a new matching $\pi'$ with a smaller matching number. So there does not exist a locally minimal matching.
\end{Expl}

\begin{Rem}[similar result for transport problems]\label{nominmatchz}
 We may reach a similar pathological example in the case of the minimization \eqref{calibration} for infinite sets of points $\{x_i^+\}_{i\in\mathbb N}, \{x_i^-\}_{i\in\mathbb N}$ by considering the example where the $x_i^+$ and the $x_i^-$ are respectively the right and left extremes of the segments met during the limit construction of a Cantor set starting from the interval $[0,1]$. In the case of the standard Cantor set the $x_i^-$ are $0$ and those $3$-adic points in $]0,1]$ such that in their expansion in base $3$ the last nonzero digit is a $2$ and the $x_i^-$ are the $3$-adic points in $[0,1]$ for which the last nonzero digit in base $3$ is a $1$. Then a similar reasoning as in Example \ref{nominmatch} applies. This topic was studied in \cite{ponce}.
\end{Rem}
\begin{proof}[Proof of Proposition~\ref{thmexistsinfinite}:]
Let $\{\{x_{2i-1},x_{2i}\} \ |\ i = 1,2,\dots\}$ be an enumeration of the pairs in $\pi$. Let $X_k \defl \{x_1,\dots,x_{2k}\} \subset X$ and $\pi_k$ the restriction of $\pi$ to this finite set. For each $k$, $\pi_k$ is a minimal matching on $X_k$ and applying Proposition~\ref{mainthmbis}, there is a $1$-Lipschitz function $f_k : \bar X \to T_k$ onto a minimal metric tree $(T_k,d_k)$ with 
\begin{equation}
\label{matchingidentity}
m(\pi_k,f_k^*d_k) = m(X_k,f_k^*d_k) = m(X_k,d) = m(\pi_k,d) = \cH^1(T) \ .
\end{equation}
If we can show that the sequence of trees $(T_k)$ is uniformly bounded and uniformly compact, it follows by a result of Gromov \cite{G} that there is a compact set $Z \subset \ell^\infty(\N)$ and isometric embeddings $\iota_k : T_k \hookrightarrow Z$ such that some subsequence of $(\iota_k(T_k))$ converges with respect to the Hausdorff distance.

The minimal trees $T_k$ as obtained in Proposition~\ref{mainthmbis} are compact and moreover,
\begin{equation}
\label{hausdorffbound}
\diam(T_k) \leq \cH^1(T_k) = m(\pi_k,d) \leq m(\pi,d) < \infty \ .
\end{equation}
Hence the sequence $(T_k)$ is uniformly bounded. Let $S_k \subset T_k$ be a maximal $\epsilon$-separated set. If $\diam(T_k) < \frac{\epsilon}{2}$, then $\#S_k = 1$. Otherwise, for any $p \in S_k$, $\cH^1(\mathbf B(p,\frac{\epsilon}{2})) \geq \frac{\epsilon}{2}$ and hence
\[
\frac{\epsilon}{2}\#S_k \leq \cH^1(T_k) \ .
\]
Using \eqref{hausdorffbound} this implies that for every $\epsilon > 0$ there is a $N(\epsilon)$ such that every $T_k$ can be covered by $N(\epsilon)$ balls of radius $\epsilon$, i.e.\ the sequence $(T_k)$ is uniformly compact. As noted before, this implies the existence of a compact subspace $T \subset Z$ (with the induced metric $d_\infty$ of $\ell^\infty(\N)$) such that $\lim_{l \to \infty} d_{\text{H}}(\iota_{k_l}(T_{k_l}),T) = 0$ for some subsequence of $(T_k)$. As a limit of compact geodesic spaces, $T$ is itself geodesic, see e.g.\ \cite[Proposition~5.38]{BH}. Since all the $T_k$ satisfy the four-point condition \eqref{fourpointcond}, it is easy to check that $T$ does too and hence $T$ is a compact metric tree. Since $\bar X$ is compact and all the maps $\iota_{k} \circ f_{k}$ are $1$-Lipschitz with values in a common compact metric space $Z$, the Arzel\`a-Ascoli theorem guarantees a subsequence of $(\iota_{k_l} \circ f_{k_l})$ that converges uniformly to some $1$-Lipschitz function $f : \bar X \to Z$ (we will use the same indices for this subsequence). The image of $\iota_{k_l} \circ f_{k_l}$ is in $\iota_{k_l}(T_{k_l})$ and hence the image of $f$ is contained in $T$. Because of \eqref{hausdorffbound}, we have for any pair $\{x_{2i-1},x_{2i}\} \in \pi$ and all $k \geq i$,
\begin{equation*}
%\label{differentmetrics}
d(x_{2i-1},x_{2i}) = d_k(f_k(x_{2i-1}), f_k(x_{2i})) = d_\infty(\iota_k(f_k(x_{2i-1})),\iota_k(f_k(x_{2i}))) \ .
\end{equation*}
Hence, by taking the limit of the functions $f_k$ we get $d_\infty(f(x_{2i-1}), f(x_{2i}))=d(x_{2i-1},x_{2i})$ for all $i$. This in particular shows that $m(\pi, f^*d_\infty) = m(\pi,d)$. We also have that $\pi$ is locally minimal for $(X, f^*d_\infty)$. Indeed otherwise there would be a matching $\pi'$ of $X$ and some $j$ such that $\{x_{2i-1}, x_{2i}\} \in \pi'$ for all $i > j$ and $m(\pi', f^*d_\infty) < m(\pi,f^*d_\infty)$. If $\pi'_k$ denotes the restriction of $\pi'$ to $X_k$, this would give $m(\pi'_k, f_k^*d_k) < m(\pi_k, f_k^*d_k)$ if $k$ is big enough, contradicting \eqref{matchingidentity}.
\end{proof}

%%%%%%%%%%%%%%%%%%%%%%%%%%%%%%%%%%%%%%%%%%%%%%%%%%%%%%%%%%%%%%%%%%%%%%%%%%%%%%%

\end{document}